\documentclass[11pt,x11names]{article}

\usepackage{latexsym}
\usepackage[dvips]{graphicx}
\usepackage{amsfonts,amssymb,amsmath}
\usepackage{pdfsync}
\usepackage{epsfig}
\usepackage{geometry}
\usepackage{color}
\usepackage{pstricks}
\usepackage{setspace}
\usepackage{comment}
\usepackage{enumerate}
\usepackage{hyperref}


\setcounter{tocdepth}{3}


\newtheorem{theorem}{Theorem}
\newtheorem{proposition}[theorem]{Proposition}
\newtheorem{lemma}[theorem]{Lemma}
\newtheorem{corollary}[theorem]{Corollary}

\newtheorem{definition}{Definition}

\newtheorem{remark}[theorem]{Remark}
\newtheorem{ejem}{Example}



\newcommand{\N}{\mathbb{N}}
\newcommand{\NN}{\mathbb{N}}

\newcommand{\R}{\mathbb{R}}

\newcommand{\KK}{\mathcal{K}}

\newcommand{\Rcupinf}{(-\infty,+\infty]}

\newcommand{\hone}{({\bf H1})}
\newcommand{\htwo}{({\bf H2})}


\newcommand{\cali}{\mathcal}

\newcommand{\f}{\varphi}
\newcommand{\e}{\varepsilon}

\setlength{\oddsidemargin}{0in}
\setlength{\evensidemargin}{-0.1in}
\setlength{\textwidth}{7in}
\setlength{\topmargin}{0.5in}
\setlength{\headheight}{-0.5in}
\setlength{\headsep}{0in}
\addtolength{\hoffset}{-0.2cm}
\setlength{\textheight}{8.8in}
\setlength{\footskip}{0.5in}
\topmargin = 17pt
\setcounter{secnumdepth}{3}



\newenvironment{proof}{\paragraph{Proof.}}{\hfill$\square$}


\DeclareMathOperator{\argmin}{argmin}

\DeclareMathOperator{\dist}{dist}
\DeclareMathOperator{\length}{length}
\DeclareMathOperator{\dom}{dom}
\DeclareMathOperator{\Prox}{prox}
\DeclareMathOperator{\prox}{prox}
\DeclareMathOperator{\Proj}{P}

\newcommand{\sfrac}[2]{\hbox{$\frac{#1}{#2}$}}














\title{From error bounds to the complexity of first-order descent methods for convex functions}

\date{\today}

\author{J\'{e}r\^{o}me Bolte\footnote{Toulouse School of Economics, Universit\'e Toulouse Capitole, Manufacture des Tabacs, 21 all\'{e}e de Brienne, Toulouse, France. E-mail: jerome.bolte@tse-fr.eu, Effort partially sponsored by the Air Force Office of Scientific Research, Air Force Material Command, USAF, under grant number FA9550-14-1-0056, the  FMJH Program Gaspard Monge in optimization and operations research and ANR GAGA
} \and Trong Phong Nguyen\footnote{Toulouse School of Economics, Universit\'e Toulouse Capitole, 
and  Department of mathematics, National University of Civil Engineering, 55 Giai Phong, Ha noi, Viet Nam. Email: phongnt@nuce.edu.vn} \and Juan Peypouquet\footnote{Departamento de Matem\'atica \& AM2V, Universidad T\'ecnica Federico Santa Mar\'\i a, Avenida  Espa\~na 1680, Valpara\'\i so, Chile. E-mail:juan.peypouquet@usm.cl. Work supported by FONDECYT grant 1140829; Basal Project CMM Universidad de Chile; Millenium Nucleus ICM/FIC RC130003; Anillo Project ACT-1106; ECOS-Conicyt Project C13E03; Conicyt Redes 140183; and MathAmsud Project 15MAT-02.} \and Bruce W. Suter\footnote{Air Force. Research Laboratory / RITB, Rome, NY, United States of America. E-mail: bruce.suter@us.af.mil}}

\begin{document}
\maketitle

\begin{center}
{\em
Dedicated to Jean-Pierre Dedieu who was of great inspiration to us.
}
\end{center}

$\:$

\begin{abstract}
This paper shows that error bounds can be used as effective tools for deriving complexity results for first-order descent methods in convex minimization. In a first stage, this objective led us to revisit the interplay between error bounds and the Kurdyka-\L ojasiewicz (KL) inequality. One can show the equivalence between the two concepts for convex functions having a moderately flat profile near the set of minimizers (as those of functions with H\"olderian growth). A counterexample shows that the equivalence is no longer true for extremely flat functions. This fact reveals the relevance of an approach based on KL inequality. In a second stage, we show how KL inequalities can in turn be  employed to compute new complexity bounds for a wealth of descent methods for convex problems. Our approach is completely original and makes use of a one-dimensional worst-case proximal sequence in the spirit of the famous majorant method of Kantorovich. Our result applies to a very simple abstract scheme that covers a wide  class of descent methods. As a byproduct of our study, we also provide new results for the globalization of KL inequalities in the convex framework.
  
Our main results inaugurate a simple methodology: derive an error bound, compute the desingularizing function whenever possible, identify essential constants in the descent method and finally compute the complexity using the one-dimensional worst case proximal sequence. Our method is illustrated through projection methods for feasibility problems, and through the famous iterative shrinkage thresholding algorithm (ISTA), for which we show that the complexity bound is of the form  $O(q^{k})$ where the constituents of the bound only depend on error bound constants obtained for an arbitrary least squares objective with $\ell^1$ regularization.

\end{abstract}

\noindent{\bf Key words:} Error bounds, convex minimization, forward-backward method, KL inequality, complexity of first-order methods, LASSO, compressed sensing.

\maketitle

\section{Overview and main results}

\paragraph{A brief insight into the theory of error bounds.} Since Hoffman's celebrated result on error bounds for systems of linear inequalities \cite{Hof}, the study of error bounds has been successfully applied to problems in sensitivity, convergence rate estimation, and feasibility issues. In the optimization world, the first natural extensions were made to convex functions by Robinson \cite{Rob}, Mangasarian \cite{Man}, and Auslender-Crouzeix \cite{Aus}. However, the most striking discovery came  years before in the pioneering works of \L ojasiewicz \cite{Loja58,Loja59} at the end of the fifties: under a mere compactness assumption, the existence of error bounds for arbitrary continuous semi-algebraic  functions was provided. Despite their remarkable depth, these works 
remained unnoticed by the optimization community during a long period (see \cite{LuoPang}).  At the 
beginning of the nineties, motivated by numerous applications, many researchers started working 
along these lines, in quest for quantitative results that could produce more effective tools. The survey 
of Pang \cite{Pang} provides a comprehensive panorama of results obtained around this time. The 
works of Luo \cite{LuoLuo,LuoPang,LuoSturm} and Dedieu \cite{dedieu} are also important 
milestones in the theory. The recent works \cite{Li,LiMorPham,Vui,LMNP,amirbeck} provide even 
stronger quantitative results by using the powerful machinery of algebraic geometry or advanced 
techniques of convex optimization.

\paragraph{A methodology for complexity of first-order descent methods.} Let us introduce  the concepts used in this work and show how they can be arranged to devise a new and systematic approach to complexity. Let $H$ be a real Hilbert space, and let $f:H\to\Rcupinf$ be a proper lower-semicontinuous convex function achieving its minimum $\min f$ so that $\argmin f\neq \emptyset$. In its most simple version, an {\em error bound} is an inequality of the form
\begin{equation}\label{omeg}\omega\big(f(x)-\min f\big)\geq \dist (x,\argmin f),
\end{equation}
where $\omega$ is an increasing function vanishing at $0$ --called here the {\em residual function}--, and where $x$ may evolve either in the whole space or in a bounded set. {\em H\"{o}lderian} error bounds, which are very common in practice, have a simple power form
$$f(x)-\min f\geq \gamma \dist ^p(x,\argmin f),$$
with $\gamma >0$, $p\geq1$ and thus $\omega(s)=(\frac{1}{\gamma}s)^\frac{1}{p}$. When functions are semi-algebraic on $H=\R^n$ and ``regular" (for instance, continuous), the above inequality is known to hold on any compact set \cite{Loja58,Loja59}, a modern reference being \cite{coste}. This property is known in real algebraic geometry under the name of {\em \L ojasiewicz inequality}. However, since we work here mainly in the sphere of optimization and  follow complexity  purposes, we shall refer to this inequality as to the {\em \L ojasiewicz error bound inequality}.

Once the question of computing constants and exponents (here $\gamma$ and $p$) for a given minimization problem is settled (see the fundamental works \cite{LuoSturm,Li,amirbeck,Vui}), it is  natural to wonder whether these concepts are connected to the complexity  properties of first-order methods for minimizing $f$.  Despite the important success of the error bound theory in several branches of optimization, we are not aware of a solid theory connecting the error bounds we consider (as defined in \eqref{omeg}), with the study of the complexity of general descent methods. There are, however, several works connecting error bounds with the convergence rates results of first-order methods (see e.g.,  \cite{Roc_mono,LuoTseng,Ferris,Nedic,BeckTeboulle,abd,Pey}). See also the new and interesting work \cite{LMNP} that provides a wealth of error bounds and some applications to convergence rate analysis. An important fraction of these works involves ``first-order error bounds"\footnote{That is, involving inequalities of the type 
$\|\nabla f(x)\|\geq \omega(\dist(x,\argmin f))$}  (see \cite{LuoPang,LuoTseng}) that are different from those we consider here.

Our answer to the connection between complexity and ``zero-order error bounds" will partially come from a related notion, also discovered by \L ojasiewicz and further developed by Kurdyka in the semi-algebraic world: the {\em \L ojasiewicz gradient inequality}. This inequality, also called    Kurdyka-\L ojasiewicz (KL) inequality (see  \cite{BolDanLeyMaz}), asserts that for any smooth semi-algebraic function $f$ there is a smooth concave function $\f$ such that 
$$\|\nabla \left(\varphi\circ (f-\min f)\right)(x)\| \geq 1$$ 
for all $x$ in some neighborhood of the set $\argmin f$. Its generalization to the nonsmooth case \cite{BolDanLew1,BolDanLewShi07} has opened very surprising roads in the nonconvex world and  it has allowed  to perform convergence rate analyses for many important algorithms in optimization \cite{AttBolSva,BST,PierreGuiJuan}. In a first stage of the present paper we show, when $f$ is convex, that error bounds are equivalent to nonsmooth KL~inequalities provided the residual function has a {\em moderate behavior} close to 0 (meaning that its derivative blows up at reasonable rate). Our result includes, in particular, all power-type examples like the ones that are often met in practice\footnote{An absolutely crucial asset of error bounds and KL inequalities in the convex world is their global nature under a mere coercivity assumption -- see Section~\ref{s:theory}.}.

Once we know that error bounds provide a KL inequality, one still needs to make the connection with the actual complexity of first-order methods. 
This is  probably the main contribution in this paper: to any given convex objective $f:H\to\Rcupinf$ and descent sequence of the form
\begin{itemize}
	\item[(i)]  $f(x_k)+a\|x_k-x_{k-1}\|^2\le f(x_{k-1}),$
	\item[(ii)] $\|\omega_{k}\|\le b\|x_k-x_{k-1}\|$ where $\omega_{k}\in\partial f(x_k)$, $k\geq 1,$
		\end{itemize}
we associate {\em a worst case one dimensional proximal method}
$$\alpha_k=\argmin\left\{\varphi^{-1}(s)+\frac{1}{2\zeta}(s-\alpha_k)^2:s\geq 0\right\},\: \alpha_0=\varphi^{-1}(f(x_0)),$$ where $\zeta$ is a constant depending explicitly on the triplet of positive real numbers $(a,b,\ell)$ where $\ell>0$ is a Lipschitz constant of $\Big(\varphi^{-1}\Big)'$.
Our complexity result asserts, under weak assumptions that the ``1-D prox" governs the complexity of the original method through the elementary and natural inequality
$$f(x_k)-\min f\leq \varphi^{-1}(\alpha_k), \, k\geq0.$$ Similar results for the sequence are provided.  These ideas are already  present in \cite{HDR} and \cite[Section 3.2]{BBJ}. The function $\varphi^{-1}$ above --the inverse of a desingularizing function for $f$ on a convenient domain--  contains almost all the information our approach provides on the  complexity of descent methods. As explained previously, it depends on the precise knowledge of a KL inequality and thus, in this convex setting, of an error bound. The reader familiar with second-order methods might have recognized the spirit of the majorant method of Kantorovich \cite{Kan64}, where a reduction to dimension one is used to study Newton's method. 

\paragraph{Deriving complexity bounds in practice: applications.} Our theoretical results inaugurate a simple methodology: derive an error bound, compute the desingularizing function whenever possible, identify essential constants in the descent method and finally compute the complexity using the one-dimensional worst case proximal sequence.  We consider first some classic well-posed problems: finding a point in an intersection of closed convex sets with regular intersection or uniformly convex problems, and we show how complexity of some classical methods can be obtained or recovered. We revisit the {\em iterative shrinkage thresholding algorithm} (ISTA) applied to a least squares objective with $\ell^1$ regularization  \cite{daub} and we prove that its complexity is of the form $O(q^k)$ with $q\in(0,1)$ (see \cite{Nedic} for a pioneering work in this direction and also  \cite{jalal} for further geometrical insights). This result contrasts with what was known on the subject \cite{BT08,drori} and suggests that many questions on the complexity of first-order methods remain open.

\paragraph{Theoretical aspects and complementary results.} As explained before, our paper led us to establish several theoretical results and to clarify some questions appearing in a somehow disparate manner in the literature. We first explain how to pass from error bounds to KL inequality in the general setting of Hilbert spaces and vice versa, similar questions appear in \cite{BolDanLew1,LiMorPham,LMNP}. This result is proved by considering the interplay between the contraction semigroup generated by the subdifferential function and the $L^1$ contraction property of this flow. These results are connected to the geometry of the residual functions $\omega$ and break down when error bounds are too flat. This is shown in Section~\ref{s:theory} by a dimension 2 counterexample presented in \cite{BolDanLeyMaz} for another purpose.

Our investigations also led us to consider the problem of KL inequalities for convex functions, a problem partly tackled in \cite{BolDanLeyMaz}. We show how to extend convex KL inequalities from a level set to the whole space. We also show that   compactness and semi-algebraicity ensure that real semi-algebraic or definable coercive convex functions are automatically KL {\em on the whole space}. This result has an interesting theoretical consequence in terms of complexity: {\em abstract descent  methods for coercive semi-algebraic convex problems are systematically amenable to a full complexity analysis provided that a desingularizing function --known to exist-- is explicitly computable}.

\paragraph{Organization of the paper.} Section 2 presents the basic notation and concepts used in this paper, especially concerning elementary convex analysis, error bounds and KL inequalities. Readers familiar with the field can directly skip to Section 3, devoted to the equivalence between KL  inequalities and error bounds. We also give some examples where this equivalence is explicitly exploited. Section~\ref{s4} establishes complexity results using KL inequalities, while  Section~\ref{s5} provides illustrations of our general methodology for the $\ell^1$ regularized least squares method and feasibility problems.  Finally, Section~\ref{s:theory} contains further theoretical aspects related to our main results, namely: some counterexamples to the equivalence between error bounds and KL inequalities, more insight into the relationship between KL inequalities and the length of subgradient curves,  globalization of KL inequalities and related questions.









\if{
{\color{blue}
In 1963, \L ojasiewicz \cite{Loja63} proved that the variation of the values of a real analytic function in a neighborhood of a critical point can be bounded by a power of the norm of its gradient. The resulting estimation has proved to be a powerful tool to analyze several issues concerning, for instance, the long-term behavior of evolution equations \cite{Sim83,HuaTak,ChiJen,HarJen,Har}, and the convergence of several keynote algorithms in optimization \cite{AbsMahAnd,attbol,AttBolRedSou,AttBolSva}.

Besides real analytic ones, there are other classes of functions that satisfy either \L ojasiewicz's original inequality or a generalization stated by Kurdyka \cite{Kur98}. Semi- and sub-analytic functions have this property \cite{Loj93,KurPar,BolDanLew1}. See \cite{BolDanLewShi07} for functions defined on o-minimal structures. The convex case is studied in \cite{BolDanLeyMaz}.

The purpose of this work is to investigate  further relationships between \L ojasiewicz's property for convex functions and:
\begin{itemize}
\item Boundedness of the length of subgradient trajectories;
\item Boundedness of the length of piecewise linear interpolation of sequences generated by abstract descent methods;
\item Error bounds; and
\item Properties of the Fenchel conjugate of the function.
\end{itemize}

The paper is organized as follows: 

}

}\fi


\section{Preliminaries}

In this section, we recall the basic concepts, notation and some well-known results to be used throughout the paper. In what follows, $H$ is a real Hilbert space and $f:H\to\Rcupinf$ is proper, lower-semicontinuous and convex. We are interested in some properties of the function $f$ around the set of its minimizers, which we suppose to be nonempty and denote by $\argmin f$ or $S$. We assume, without loss of generality, that $\min f=0$.

\subsection{Some convex analysis}\label{SS:convex_analysis}

We use the standard notation from \cite{Rockafellar} (see also \cite{BauCom,Pey_book} and \cite{morduk}). The {\em subdifferential} of $f$ at $x$ is defined as
$$\partial f(x)=\{u\in H:f(y)\ge f(x)+\langle u,y-x\rangle\hbox{ for all }y\in H\}.$$
Clearly, $\hat x$ minimizes $f$ on $H$ if, and only if, $0\in\partial f(\hat x)$. The {\em domain} of the point-to-set operator $\partial f:H\rightrightarrows H$ is $\dom \partial f:=\{x\in H:\partial f(x)\neq\emptyset\}.$ For $x\in\dom \partial f$, we denote by $\partial^0f(x)$ the least-norm element of $\partial f(x)$. The vector  $\partial^0f(x)$ exists and is unique as it is the projection of $0\in H$ onto the nonempty closed convex set $\partial f(x)$. We have $\|\partial^0f(x)\|=\dist(0,\partial f(x))$ (when $x$ is not in $\dom \partial f$ we set $\|\partial^0f(x)\|=+\infty$). We adopt the convention $s\times (+\infty)=+\infty$ for all $s>0$.
 
\noindent
Given $x\in H$, the function $f_x$, defined by
$$f_x(y)=f(y)+\frac{1}{2}\|y-x\|^2$$
for $y\in H$, has a unique minimizer, which we denote by $\prox_f(x)$. Using Fermat's Rule and the Moreau-Rockafellar Theorem, $\prox_f(x)$ is characterized as the unique solution of the inclusion
$x-\prox_f(x)\in \partial f\left(\prox_f(x)\right).$ 
In particular, $\prox_f(x)\in \dom\partial f\subset\dom f\subset H$. The mapping $\prox_f:H\to H$ is the {\em proximity operator} associated to $f$. It is easy to prove that $\prox_f$ is  Lipschitz  continuous with constant 1.

\begin{ejem}{\rm
If $C\subset H$ is nonempty, closed and convex, the {\em indicator function} of $C$ is the function $i_C:H\to(-\infty,\infty]$, defined by
$$i_C(x)=\begin{cases}
	0& \text{ if } x\in C\\
	+\infty&\text{ otherwise.}
	\end{cases}$$
It is proper, lower-semicontinuous and convex. Moreover, for each $x\in H$, $\partial i_C(x)=N_C(x)$, the {\em normal cone} to $C$ at $x$. In turn, $\prox_{i_C}$ is the {\em projection operator} onto $C$, which we denote by $P_C$.}
\end{ejem}

\if{


Let $g,h:H\to\Rcupinf$. The {\em infimal convolution} of $g$ and $h$ is the function $g\,\square\,h:H\to\Rcupinf$ defined by
$$ g\,\square\,h\,(x)=\inf_{y\in H} \{g(y)+h(x-y)\}.$$
If $g$ and $h$ are proper, lower semicontinuos and convex, then so is $g\,\square\,h$.

\begin{ejem}
Let $C\subset H$ be nonempty, closed and convex. For each $p>0$, 
$$\left(i_C\,\square\, \|\cdot \|^p\right)(x)=\inf_{y\in \R^n}\{i_C(y)+\|x-y\|^p\}=\dist(x,C)^p.$$
\end{ejem}

Let $f:H\to\Rcupinf$ be proper, lower-semicontinuous and convex. For $\lambda>0$, the function $f_\lambda:H\to\R$ defined by 
$$f_\lambda(x)=\left(f\,\square\,\displaystyle{\frac{1}{2\lambda}}\|\cdot\|^2\right)(x)=\inf_{y\in H}\left\{f(y)+\frac{1}{2\lambda}\|x-y\|^2\right\}$$ is the {\em Moreau envelope} of $f$ with parameter $\lambda$. This function is a convex function finite everywhere and which satisfies $$\text{$f_\lambda\le f$, $\inf_H(f_\lambda)=\inf_H(f)$, and $\argmin(f_\lambda)=\argmin f$,}$$ for all $\lambda>0$ (see e.g. \cite{BauCom}). Moreover, by strong convexity, the infimum in the definition of $f_\lambda$ is attained at a unique point denoted by $\Prox_{\lambda f}(x)$, and characterized by the inclusion
$$x-\Prox_{\lambda f}(x)\in\lambda\,\partial f(\Prox_{\lambda f}(x)).$$
The mapping $\Prox_{\lambda f}:H\to H$, known as the (Moreau) {\em proximity operator} for $f$ with parameter $\lambda$, is nonexpansive. Finally, the function $f_\lambda$ is differentiable in the sense of Fr\'echet, and its gradient, given by 
$$\nabla f_\lambda(x)=\frac{x-\Prox_{\lambda f}(x)}{\lambda}, \:\forall x\in H,$$ is  Lipschitz  continuous with constant $\frac{1}{\lambda}$.

\begin{ejem}
Let $C\subset H$ be nonempty, closed and convex, and let $f=i_C$ be the indicator function of $C$. Then $f_\lambda(x)=\frac{1}{2\lambda}\dist(x,C)^2$ and $\Prox_{\lambda f}(x)=\Proj_C(x)$ for all $\lambda>0$.
\end{ejem}

The {\em Fenchel conjugate} of $f$ is the function $f^*:H\to\Rcupinf$ defined by
$$f^*(u)=\sup_{x\in H}\{\langle x,u\rangle-f(x)\}.$$
It is lower-semicontinuous and convex. Moreover, $f:H\to\Rcupinf$ is proper, lower-semicontinuous and convex if, and only if, $f^{**}=f$. The Fenchel conjugation is order-reversing. More precisely, if $g,h:H\to\Rcupinf$ are proper, lower-semicontinuous and convex, then $g\le h$ if, and only if, $h^*\le g^*$.

{\color{red} \begin{ejem}
If $f=i_C$, then $f^*=\sigma_C$, the {\em support function} of $C$.
\end{ejem}
}

\begin{ejem}
Let $f=r\|\cdot \|^p$ with $r>0$ and $p\ge1$. If $p=1$, then $f^*=\delta_{B(0;r)}$. For $p>1$, we have $f^*=\tilde r\|\cdot \|^q$, where $q=\displaystyle\frac{p}{p-1}$ and $\tilde r=r(p-1)(rp)^{-q}$.
\end{ejem}

There is an important relationship between the Fenchel conjugate and the infimal convolution, namely
$$\left(g\,\square\, h\right)^*=g^*+h^*.$$

\begin{ejem}\label{EJ:distance} Let $C$ be a nonempty, closed and convex subset of $H$, $r>0$ and $p\geq 1$. Then
$$
\left(\sfrac{}{}\!r\dist(\cdot,C)^p\right)^* =(i_C\,\square\, r\|\cdot \|^p)^*=i_C^*+(r\|\cdot\|^p)^*
	=\begin{cases}
	\sigma_C+\delta_{B(0;r)}& \text{ if }p=1\\
	\sigma_C+ \tilde K\|\cdot\|^q& \text{ if } p>1,	
	\end{cases}
$$
where $q=\displaystyle\frac{p}{p-1}$ and $\tilde r=r(p-1)(rp)^{-q}$.
\end{ejem}


}\fi

\subsection{Subgradient curves}

Consider the differential inclusion
$$\begin{cases}
\dot y(t)\in -\partial f(y(t)), \quad\text{for almost all $t$ in }(0,+\infty)\\
y(0)=x,
\end{cases}$$
where $x\in\overline{\dom f}$ and $y(\cdot)$ is an absolutely continuous curve in $H$. The main properties of this system $-$ for the purpose of this research $-$ are summarized in the following:

\begin{theorem}[Br\'ezis \cite{HaimBrezis}, Bruck \cite{Bruck}]\label{P:gradient_curve}
For each $x\in \overline{\dom f}$, there is a unique absolutely continuous curve $\chi_x:[0,\infty)\to H$ such that $\chi_x(0)=x$ and
$$\dot{\chi}_x(t)\in -\partial f\left(\chi_x(t)\right)$$
for almost every $t>0$. Moreover,
\begin{itemize}
\item[i)] $\displaystyle\sfrac{d}{dt}\chi_x(t^+)=-\partial^0f(\chi_x(t))$ for all $t>0$;
\item[ii)] $\displaystyle\sfrac{d}{dt}f\left(\chi_x(t^+)\right)=-\|\dot{\chi}_x(t^+)\|^2$ for all $t>0$;
\item [iii)] For each $z\in S$, the function $t\mapsto\|\chi_x(t)-z\|$ decreases;
\item [iv)] The function $t\mapsto f(\chi_x(t))$ is nonincreasing and $\lim_{t\to\infty}f(\chi_x(t))=\min f$;
\item [v)] $\chi_x(t)$ converges weakly to some $\hat x\in S$, as $t\to\infty$.
\end{itemize}
 \end{theorem}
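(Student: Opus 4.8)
The plan is to place the differential inclusion within the theory of contraction semigroups generated by maximal monotone operators. First I would recall that, since $f$ is proper, lower-semicontinuous and convex, its subdifferential $\partial f$ is a maximal monotone operator (Minty--Moreau--Rockafellar). The whole existence-and-uniqueness content, together with property~(i), is then precisely the statement of Br\'ezis's Cauchy theorem for such operators, so in a self-contained treatment I would reconstruct it through Yosida regularization: replace $\partial f$ by its Yosida approximation $A_\lambda=\frac1\lambda(I-J_\lambda)$, where $J_\lambda=(I+\lambda\partial f)^{-1}$ is the single-valued nonexpansive resolvent. Since $A_\lambda$ is globally Lipschitz, the regularized equation $\dot y_\lambda=-A_\lambda(y_\lambda)$, $y_\lambda(0)=x$, has a unique global solution by Cauchy--Lipschitz. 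Uniform a priori bounds on $y_\lambda$ and $A_\lambda(y_\lambda)$ (exploiting monotonicity and the estimate $\|A_\lambda(x)\|\le\|\partial^0 f(x)\|$) let me pass to the limit $\lambda\to0$ and obtain an absolutely continuous solution $\chi_x$ that is Lipschitz on every $[\delta,\infty)$; uniqueness follows at once from monotonicity, since for two solutions $u,v$ one has $\frac{d}{dt}\|u-v\|^2=2\langle\dot u-\dot v,u-v\rangle\le0$. Finally, the right-derivative formula $\frac{d}{dt}\chi_x(t^+)=-\partial^0 f(\chi_x(t))$ of~(i) comes from identifying the everywhere-existing right limit of $A_\lambda(y_\lambda)$ with the least-norm element of $\partial f(\chi_x(t))$.

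For~(ii) I would use the chain-rule lemma for convex functions along absolutely continuous curves: if $u$ is absolutely continuous with $\dot u\in L^2$ and $u(t)\in\dom\partial f$ admitting an $L^2$ selection, then $t\mapsto f(u(t))$ is absolutely continuous and $\frac{d}{dt}f(u(t))=\langle h,\dot u(t)\rangle$ for every $h\in\partial f(u(t))$. Applying this with the selection $h=-\dot\chi_x=\partial^0 f(\chi_x)$ furnished by~(i) yields $\frac{d}{dt}f(\chi_x(t^+))=-\|\dot\chi_x(t^+)\|^2$. Property~(iii) is a direct monotonicity estimate: for $z\in S$ we have $0\in\partial f(z)$, so monotonicity of $\partial f$ gives $\langle\partial^0 f(\chi_x(t)),\chi_x(t)-z\rangle\ge0$, whence $\frac12\frac{d}{dt}\|\chi_x(t)-z\|^2=\langle\dot\chi_x(t),\chi_x(t)-z\rangle\le0$, and the distance decreases.

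Property~(iv) splits into monotonicity and the limit. Monotonicity of $t\mapsto f(\chi_x(t))$ is immediate from~(ii). For the limit, integrating~(ii) gives $\int_0^\infty\|\dot\chi_x(t)\|^2\,dt=f(x)-\lim_{t\to\infty}f(\chi_x(t))<\infty$; combined with the fact that $t\mapsto\|\dot\chi_x(t^+)\|=\|\partial^0 f(\chi_x(t))\|$ is nonincreasing along the flow of a convex function, this forces $\|\dot\chi_x(t)\|\to0$. Then the convexity inequality $f(\chi_x(t))-\min f\le\langle\partial^0 f(\chi_x(t)),\chi_x(t)-z\rangle\le\|\dot\chi_x(t)\|\,\|\chi_x(t)-z\|$, together with the boundedness of $\|\chi_x(t)-z\|$ guaranteed by~(iii), yields $f(\chi_x(t))\to\min f$.

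For the weak convergence~(v) I would invoke Opial's lemma: it suffices to check that (a)~$\lim_{t\to\infty}\|\chi_x(t)-z\|$ exists for every $z\in S$, which is~(iii), and (b)~every weak cluster point of $\chi_x(t)$ lies in $S$. For~(b), if $\chi_x(t_n)\wto\hat x$ then weak lower-semicontinuity of $f$ and~(iv) give $f(\hat x)\le\liminf_n f(\chi_x(t_n))=\min f$, so $\hat x\in S$; Opial's lemma then delivers a unique weak limit $\hat x\in S$. I expect the main obstacle to be the foundational existence-uniqueness-and-regularity step underlying~(i): the passage to the limit in the Yosida scheme and the identification of the right derivative with the minimal section require the full strength of maximal monotone operator theory, whereas~(ii)--(v) are comparatively routine consequences of convexity, monotonicity and Opial's argument.
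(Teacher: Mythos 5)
The paper does not actually prove Theorem~\ref{P:gradient_curve}: immediately after the statement it defers to Br\'ezis \cite{HaimBrezis} for existence, uniqueness and items i)--iv), and to Bruck \cite{Bruck} for item v). So your proposal is not competing with an argument in the paper; it is a reconstruction of the cited classical proofs, and as a sketch it is correct and follows precisely the route of those references: Yosida regularization $A_\lambda=\lambda^{-1}(I-J_\lambda)$ together with the bound $\|A_\lambda x\|\le\|\partial^0 f(x)\|$ and monotonicity for existence, uniqueness and i); Br\'ezis's chain rule along absolutely continuous curves for ii); the anchor computation for iii); and Opial's lemma for v), which is the standard streamlined form, for subdifferentials, of Bruck's more general theorem on demipositive operators.

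Two fine points. First, in iv) you invoke the fact that $t\mapsto\|\partial^0 f(\chi_x(t))\|$ is nonincreasing. This is true, but it is itself a nontrivial piece of the very theorem you are reconstructing (it is item (6) of \cite[Theorem 3.1]{HaimBrezis}, which the paper cites separately in the proof of Proposition~\ref{convex}); in a self-contained treatment it must be derived from the semigroup property and the Yosida estimates, not assumed. It can also be bypassed entirely: for $z\in S$, convexity and $-\dot\chi_x(t)\in\partial f(\chi_x(t))$ give $\frac{1}{2}\frac{d}{dt}\|\chi_x(t)-z\|^2=\langle\dot\chi_x(t),\chi_x(t)-z\rangle\le \min f-f(\chi_x(t))$, and integrating over $[0,T]$ while using the monotonicity of $f(\chi_x(\cdot))$ from ii) yields $f(\chi_x(T))-\min f\le \|x-z\|^2/(2T)$, which proves the limit in iv) with an $O(1/T)$ rate using only ii) and iii). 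Second, as you yourself acknowledge, the genuinely hard step is the compactness and identification argument in the Yosida limit (and the right-continuity of the derivative needed to state i) and ii) for \emph{all} $t>0$ rather than almost everywhere); at the level of detail given, that part remains a citation to \cite{HaimBrezis} in all but name, which is exactly the position the paper itself takes.
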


The proof of the result above is provided in \cite{HaimBrezis}, except for part ${\rm v)}$, which was proved in \cite{Bruck}. The trajectory $t\mapsto \chi_x(t)$ is  called a {\em subgradient curve.}

\subsection{Kurdyka-\L ojasiewicz inequality}
In this subsection, we present the nonsmooth Kurdyka-\L ojasiewicz inequality introduced in \cite{BolDanLew1} (see also \cite{BolDanLewShi07,BolDanLeyMaz}, and the fundamental works \cite{Loja63,Kur98}). To simplify the notation, we write $[f<\mu]=\{x\in H:f(x)<\mu\}$ (similar notation can be guessed from the context). Let $r_0>0$ and set
$$\cali K(0,r_0)=\left\{\, \f \in C^0[0,r_0)\cap C^1(0,r_0),\ \f(0)=0,\ \f\text{ is concave and }\f'>0\, \right\}.$$

The function $f$ satisfies the {\em Kurdyka-\L ojasiewicz {\rm (KL)} inequality} (or has the KL {\em property}) locally at $\bar x\in \dom f$ if there exist $r_0>0$, $\f\in \cali K(0,r_0)$ and $\varepsilon >0$ such that
$$
\f'\left(f(x)-f(\bar x)\right)\,\dist(0,\partial f(x))\geq 1
$$
for all $x\in B(\bar x,\e)\cap [f(\bar x)<f(x)<f(\bar x)+r_0]$. We say $\f$ is a {\em desingularizing function} for $f$ at $\bar x$. This property basically expresses the fact that a function can be made sharp by a reparameterization of its values. 
 
If $\bar x$ is not a minimizer of $f$, the KL inequality is obviously satisfied at $\bar x$. Therefore, we focus on the case when $\bar x\in S$. Since $f(\bar x)=0$, the KL inequality reads
\begin{equation}\label{l1'}
\f'\left(f(x)\right)\,\|\partial^0 f(x)\|\geq 1
\end{equation}
for $x\in B(\bar x,\e)\cap [0<f<r_0$]. The function $f$ has the KL  property on $S$ if it does so at each point of $S$.
 
The {\em \L ojasiewicz gradient inequality} corresponds to the case when $\f(s)=c s^{1-\theta}$ for some $c>0$ and $\theta\in [0,1)$. 
Following \L ojasiewicz original presentation, (\ref{l1'}) can be reformulated as follows
$$\|\partial ^0 f(x)\|\geq c'\, f(x)^\theta,$$
where $c'=[(1-\theta)c]^{-1}$. The number $\theta$ is the {\em \L ojasiewicz exponent}. If $f$ has the KL property and admits the same desingularizing function $\varphi$ at {\em every point}, then we say that $\varphi$ is a {\em global} desingularizing function for $f$.

KL inequalities were developed within the fascinating world of real  semi-algebraic sets and functions. For that subject, we refer the reader to the book \cite{coste} by Bochnak-Coste-Roy.  

We recall the following theorem on the nonsmooth KL inequality (which follows the pioneering works of \L ojasiewicz \cite{Loja63} and Kurdyka \cite{Kur98}). It is one of the cornerstones of the present research:

\begin{theorem}[Bolte-Daniilidis-Lewis \cite{BolDanLew1}]{\rm(}Nonsmooth KL inequality{\rm)} If $f:\R^n\to\Rcupinf$ is proper, convex, lower-semicontinuous and semi-algebraic\footnote{If {\em semi-algebraic} is replaced by {\em subanalytic} or {\em definable}, we obtain the same results.}, then it has the KL property around each point in $\dom f$. \end{theorem}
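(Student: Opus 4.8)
The plan is to reduce to the case of a minimizer and then recast the inequality as a one-variable question about the least subgradient norm along the level sets of $f$, which can be settled with the semi-algebraic machinery. The case $\bar x\notin S$ has already been granted to be trivial (the slope stays bounded away from $0$), so I would assume $\bar x\in S$ and, as agreed, $f(\bar x)=\min f=0$; I may also assume that every neighborhood of $\bar x$ meets $[f>0]$, since otherwise the KL inequality is vacuous.

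The elementary ingredient is a convexity estimate controlling the slope from below by the value. Fix $\varepsilon>0$. For $x$ with $\|x-\bar x\|\le\varepsilon$ and $f(x)=r>0$, and any $u\in\partial f(x)$, the subgradient inequality evaluated at $\bar x$ reads $0=f(\bar x)\ge f(x)+\langle u,\bar x-x\rangle$, whence $\langle u,x-\bar x\rangle\ge r$ and therefore $\|u\|\ge r/\|x-\bar x\|\ge r/\varepsilon$. In particular the value-to-slope function
\[
m(r)\;=\;\inf\bigl\{\,\|\partial^0 f(x)\|\ :\ \|x-\bar x\|\le\varepsilon,\ f(x)=r\,\bigr\}
\]
satisfies $m(r)\ge r/\varepsilon>0$ for $r\in(0,r_0)$; in particular it is positive and cannot vanish faster than linearly as $r\to0^+$.

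Next I would bring in semi-algebraicity. Since $f$ is semi-algebraic and convex, the graph of $\partial f$ is semi-algebraic (apply Tarski--Seidenberg to the first-order characterization of subgradients), so $x\mapsto\|\partial^0 f(x)\|=\dist(0,\partial f(x))$ is semi-algebraic, and hence so is the one-variable function $m$. By the Monotonicity Theorem and the Puiseux expansion of semi-algebraic functions, after shrinking $r_0$ there are $c>0$ and a rational $\theta$ with $m(r)=c\,r^{\theta}\bigl(1+o(1)\bigr)$ as $r\to0^+$, and the bound $m(r)\ge r/\varepsilon$ forces $\theta\le1$. The decisive point---and the genuine obstacle---is to upgrade this to the strict inequality $\theta<1$, i.e.\ to rule out exactly linear decay of $m$. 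Heuristically, $m(r)\sim c\,r$ would force $\tfrac{d}{dx}\log f\approx c$ along the valley of $f$ near $\bar x$, hence exponential decay of the values, which is incompatible with $f(\bar x)=0$ at a finite point; rigorously this is the nonsmooth \L ojasiewicz gradient inequality, and it is precisely here that semi-algebraicity (through o-minimality and the curve selection lemma) is indispensable, convexity alone being insufficient, as the flat counterexample discussed later in the paper shows.

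With $\theta\in[0,1)$ and $c>0$ such that $\|\partial^0 f(x)\|\ge m(f(x))\ge c\,f(x)^{\theta}$ on $B(\bar x,\varepsilon)\cap[0<f<r_0]$, I would finally set
\[
\varphi(s)=\frac{1}{c\,(1-\theta)}\,s^{\,1-\theta}.
\]
Then $\varphi\in\cali K(0,r_0)$: one has $\varphi(0)=0$, $\varphi'(s)=\tfrac1c\,s^{-\theta}>0$ on $(0,r_0)$, and $s\mapsto s^{\,1-\theta}$ is concave because $1-\theta\in(0,1]$. Moreover, for $x$ in the prescribed set,
\[
\varphi'\bigl(f(x)\bigr)\,\|\partial^0 f(x)\|\;\ge\;\tfrac1c\,f(x)^{-\theta}\cdot c\,f(x)^{\theta}\;=\;1,
\]
which is exactly \eqref{l1'}. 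As $\bar x\in S$ was arbitrary, $f$ enjoys the KL property around every point of $\dom f$.
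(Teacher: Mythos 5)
You should first note that the paper itself gives no proof of this statement: it is quoted as an external result of Bolte--Daniilidis--Lewis \cite{BolDanLew1}, so your attempt can only be judged as a self-contained argument. As such, it has a genuine gap, and the gap sits exactly at the step you flag as ``the decisive point'': ruling out $\theta=1$. Everything before that is sound (the reduction to $\bar x\in S$, the convexity estimate $\|u\|\ge f(x)/\|x-\bar x\|$, the semi-algebraicity of the marginal function $m$ via Tarski--Seidenberg, and the Puiseux expansion giving $m(r)=c\,r^{\theta}(1+o(1))$ with $\theta\le 1$), but at the crucial moment you write that rigorously the strict inequality $\theta<1$ ``is the nonsmooth \L ojasiewicz gradient inequality.'' That is precisely the theorem you are being asked to prove, so the argument is circular: all the real content of the statement is concentrated in the passage from $\theta\le 1$ to $\theta<1$, and your heuristic (exponential decay of values along the valley versus $f(\bar x)=0$ at a finite point) is not an argument. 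Indeed convexity alone genuinely cannot close this gap: the counterexample of Section~\ref{s:theory} (from \cite{BolDanLeyMaz}) is a $C^2$ convex function on $\R^2$ whose slope decays essentially linearly with the value, i.e.\ exactly the regime $\theta=1$ that you need to exclude, so any correct proof must use semi-algebraicity in an essential, quantitative way at this step --- not merely to establish definability of $m$.

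There are two standard ways to fill the hole, and it is worth seeing that one of them is available inside this very paper. First, the talweg/curve-selection route, which makes your heuristic rigorous: use definable choice to select a semi-algebraic curve $(0,r_0)\ni r\mapsto x_r$ with $f(x_r)=r$, $\|x_r-\bar x\|\le\e$ and $\|\partial^0f(x_r)\|\le 2m(r)$; a bounded semi-algebraic curve has finite length, while the (sub)gradient chain rule gives $1=\frac{d}{dr}f(x_r)\le\|\partial^0 f(x_r)\|\,\|\dot x_r\|$, hence $\length \ge\int_0^{r_0}\frac{dr}{2m(r)}$; finiteness of the length forces $1/m$ to be integrable at $0$, which for $m(r)\sim c\,r^\theta$ means $\theta<1$. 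Second, the convex shortcut that matches the paper's own methodology: invoke the classical \L ojasiewicz \emph{error bound} inequality \cite{Loja58,Loja59} --- an independent, citable result, not the gradient inequality --- to get $f(x)\ge\gamma\,\dist(x,S)^p$ near $\bar x$, and then combine it with your own convexity estimate exactly as in part (ii) of Theorem~\ref{Theolocal}: $f(x)\le\dist(x,S)\,\|\partial^0f(x)\|\le(f(x)/\gamma)^{1/p}\|\partial^0f(x)\|$, which yields $\|\partial^0f(x)\|\ge\gamma^{1/p}f(x)^{1-1/p}$, i.e.\ the KL inequality with exponent $\theta=1-1/p<1$. Either repair turns your outline into a proof; as written, the proposal assumes its conclusion. (Two minor points to also fix: if the Puiseux exponent $\theta$ is negative you should replace it by $0$, since $s^{1-\theta}$ is then convex rather than concave; and the asymptotic constant $c$ should be replaced by, say, $c/2$ on a smaller interval to get a uniform lower bound.)
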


Under an additional coercivity assumption, a global result is provided in Subsection \ref{locglob}.

\subsection{Error bounds}\label{SS:EB}

Consider a nondecreasing function $\omega:[0,+\infty[ \to [0,+\infty[$ with $\omega(0)=0$. The function $f$ satisfies a local error bound with {\em residual function} $\omega$ if there is $r_0>0$ such that
$$(\omega\circ f)(x)\geq \dist(x,S)$$
for all $x\in [0\le f\leq r_0]$  (recall that $\min f=0$). Of particular importance is the case when $\omega(s)=\gamma^{-1} s^{\frac{1}{p}}$ with $\gamma>0$ and $p\ge 1$, namely:
$$f(x)\geq \gamma\,\dist(x,S)^p$$
for all $x\in [0\le f\leq r_0]$. 

If $f$ is convex lower semicontinuous, we can extend the error bound beyond $[0\le f\leq r_0]$ by  linear extrapolation. More precisely, let $x\in \dom f$ such that $f(x)>r_0$. Then $f$ is continuous on the segment  $[x,P_S(x)]$. Therefore, there is $x_0\in [x,P_S(x)]$ such that 
$f(x_0)=r_0$.  By convexity, we have
$$\frac{f(x)-0}{\dist(x,S)}\ge\frac{f(x_0)-0}{\dist(x_0,S)}\ge r_0\left(\frac{\gamma}{r_0}\right)^{\frac{1}{p}}.$$
It follows that 
$$\begin{array}{rclcl}
f(x) & \ge & \gamma\,\dist(x,S)^p & \hbox{for} & x\in [0\le f\leq r_0],\\
f(x) & \ge & r_0^{\frac{p-1}{p}}\,\gamma^{\frac{1}{p}}\,\dist(x,S) & \hbox{for} & x\notin[0\le f\leq r_0].
\end{array}$$
This entails that
$$f(x)+f(x)^{\frac{1}{p}}\ge \gamma_0\,\dist(x,S)$$
for all $x\in H$, where $\gamma_0=\left(1+r_0^{\frac{p-1}{p}}\right)\,\gamma^{\frac{1}{p}}$. This is known in the literature as a global {\em H\"{o}lder-type} error bound (see \cite{Li}). Observe that it can be put under the form $\omega(f(x))\geq \dist(x,S)$ by simply setting 
$\omega(s)=\frac{1}{\gamma_0}(s+s^{\frac{1}{p}})$.
When combined with the \L ojasiewicz error bound inequality \cite{Loja58,Loja59}, the above remark implies immediately the following result:

\begin{theorem}[Global error bounds for semi-algebraic coercive convex functions]\label{GBS} $\:$\\
Let $f:\R^n\to\Rcupinf$ be proper, convex, lower-semicontinuous and semi-algebraic, and assume that $\argmin f$ is nonempty and compact. 
Then $f$ has a global error bound
$$f(x)+f(x)^{\frac{1}{p}}\ge  \gamma_0\,\dist(x,\argmin f), \forall x\in \R^n,$$
where $\gamma_0>0$ and $p\geq1$ is a rational number.
\end{theorem}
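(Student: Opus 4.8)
The plan is to assemble two ingredients, both already available in the excerpt: the \L ojasiewicz error bound inequality on compact sets \cite{Loja58,Loja59,coste}, and the linear extrapolation computation carried out in Subsection~\ref{SS:EB}. The only genuinely new point to secure is that compactness of $\argmin f$ upgrades the \emph{valid-on-any-prescribed-compact-set} \L ojasiewicz bound into a bound valid on an \emph{entire sublevel set} $[0\le f\le r_0]$, which is exactly the form needed to trigger the extrapolation.

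First I would establish coercivity. Since $f$ is proper, lower-semicontinuous and convex with $S=\argmin f$ nonempty and bounded, a classical fact of convex analysis (boundedness of one nonempty sublevel set is equivalent to boundedness of all of them, equivalently to $f$ being coercive) guarantees that every sublevel set $[f\le r]$ is bounded, and being closed it is compact. In particular, fixing any $r_0>0$ in the range of $f$, the set $[0\le f\le r_0]=[f\le r_0]$ is compact, so that a single pair of constants from the \L ojasiewicz inequality will govern all of it.

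Next I would invoke the \L ojasiewicz error bound inequality applied on the compact set $[f\le r_0]$ to the continuous semi-algebraic function $f$ (finite convex functions are continuous on the interior of their domain, and off $\dom f$ the bound holds trivially with left-hand side $+\infty$). This produces $\gamma>0$ and a \emph{rational} exponent $q>0$ with
$$f(x)\ge \gamma\,\dist(x,S)^{q}\qquad\text{for all } x\in[0\le f\le r_0].$$
It then remains to check $q\ge 1$, where convexity is decisive: restricting $f$ to the segment from $P_S(x)$ to $x$ gives a one-dimensional convex function vanishing at its endpoint in $S$, hence bounded above by a linear function $f(x)\le C\,\dist(x,S)$ near $S$; comparing this with the lower bound forces $\dist(x,S)^{q-1}\le C/\gamma$ for small distances, which is possible only if $q\ge 1$. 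Setting $p=q$ delivers the local H\"olderian error bound with rational $p\ge1$.

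Finally, I would feed this local bound into the extrapolation already displayed in Subsection~\ref{SS:EB}: for $x$ with $f(x)>r_0$, continuity of $f$ along $[x,P_S(x)]$ yields a point $x_0$ with $f(x_0)=r_0$, and monotonicity of the convex quotient $\frac{f(\cdot)-0}{\dist(\cdot,S)}$ along the ray gives the linear bound $f(x)\ge r_0^{(p-1)/p}\gamma^{1/p}\,\dist(x,S)$ off the sublevel set; summing the two regimes produces
$$f(x)+f(x)^{\frac1p}\ge \gamma_0\,\dist(x,S)\qquad\text{for all }x\in\R^n,$$
with $\gamma_0=\big(1+r_0^{(p-1)/p}\big)\gamma^{1/p}>0$, as claimed. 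I expect the main obstacle to be precisely the passage effected in Step~1: without coercivity the \L ojasiewicz inequality only furnishes constants that a priori depend on the chosen compact set, and it is the boundedness of $\argmin f$ that makes the whole of $[f\le r_0]$ compact and thus legitimizes one pair $(\gamma,p)$ controlling it; the secondary point to watch is the convexity argument guaranteeing $p\ge1$.
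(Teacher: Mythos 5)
Your proof is correct and takes essentially the same route as the paper, whose proof of Theorem~\ref{GBS} is precisely the combination of the \L ojasiewicz error bound inequality on the compact sublevel set $[0\le f\le r_0]$ (compact thanks to coercivity, which for proper lsc convex functions on $\R^n$ is equivalent to $\argmin f$ being nonempty and compact) with the linear-extrapolation computation displayed in Subsection~\ref{SS:EB}. Your only departures are to make explicit two points the paper leaves implicit — the coercivity/compactness of sublevel sets, and the fact that the exponent can be taken rational and $\ge 1$ (your segment argument, using that projections onto the convex set $S$ are constant along the segment $[P_S(x),x]$, is sound) — so this is a faithful, slightly more detailed rendering of the same proof.
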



\section{Error bounds with moderate growth are equivalent to \L ojasiewicz inequalities}\label{s:holder}

In this section, we establish a general equivalence result between error bounds and  KL  inequalities. Our main goal is to provide a simple and natural way of explicitly computing \L ojasiewicz exponents and, more generally, desingularizing functions. To avoid perturbing the flow of our general methodology on complexity, we discuss limitations and extensions of our results later, in Section~\ref{s:theory}. 

As shown in Section~\ref{s4}, KL inequalities allow us to derive complexity bounds for first-order methods. However, KL inequalities with known constants are in general difficult to establish while error bounds are more tractable (see e.g., \cite{Li} and references therein). The fact that these two notions are equivalent opens a wide range of possibilities when it comes to analyzing algorithm complexity.

\subsection{Error bounds with moderate residual functions and \L ojasiewicz inequalities}


\noindent{\bf Moderate residual functions.}  Error bounds often have a power or H\"older-type form (see e.g. \cite{LuoPang,LuoLuo,LuoSturm,Li,NgZheng,Vui}). They can be either very simple $s\to as^p$ or exhibit two regimes, like for instance, $s\to as^p+bs^q$. In any cases, for all concrete instances we are aware of, residual functions are systematically semi-algebraic or of ``power-type". In this paper, we introduce a category of functions that allows to encompass these semi-algebraic cases and even more singular ones into a simple and appealing framework. A function $\varphi: [0,r)\to \R$ in $C^1(0,r)\cap C^0[0,r)$ and vanishing at the origin, has a {\em moderate behavior (near the origin)} if it satisfies a differential equation of the type 
$$s\varphi'(s)\geq c\varphi(s), \:\forall s\in (0,r),$$
 where $c$ is a positive constant (observe that by concavity one has necessarily $c\leq 1$).
A pretty direct use of the Puiseux Lemma (see \cite{coste}) shows:

\begin{lemma} If $\varphi: [0,r)\to \R$ in $C^1(0,r)\cap C^0[0,r)$, vanishes at the origin and is semi-algebraic or subanalytic then it has a moderate behavior.
 \end{lemma}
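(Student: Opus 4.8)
The plan is to reduce the statement to a single asymptotic computation, namely the behaviour of the logarithmic quantity $s\varphi'(s)/\varphi(s)$ as $s\to 0^+$, which the Puiseux Lemma makes completely explicit. First I would dispose of the trivial case in which $\varphi$ vanishes identically on some interval $(0,\rho)$: there both sides of $s\varphi'(s)\ge c\varphi(s)$ are zero and any $c>0$ works. So assume $\varphi\not\equiv 0$ near the origin. Since $\varphi$ is semi-algebraic (or subanalytic) and continuous on $[0,r)$, the Puiseux Lemma (see \cite{coste}) provides $\rho\in(0,r)$, a positive integer $N$, and a power series $\psi(t)=\sum_{k\ge k_0}a_k t^k$ with $a_{k_0}\ne 0$, convergent for $t\in[0,\rho^{1/N})$, such that $\varphi(s)=\psi(s^{1/N})$ for every $s\in[0,\rho)$. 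Writing $\alpha_0=k_0/N$, continuity together with $\varphi(0)=0$ forces the leading exponent to be positive, $\alpha_0>0$. We may assume $\varphi>0$ on $(0,\rho)$ (the case of interest, since we shall apply this to the positive, increasing residual and desingularizing functions, so that $a_{k_0}>0$); the symmetric case $a_{k_0}<0$ is handled verbatim by choosing $c>\alpha_0$ below.

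Next I would use that a convergent power series may be differentiated term by term on its interval of convergence. Through the substitution $s=t^N$ this gives $\varphi'(s)=\tfrac1N s^{1/N-1}\psi'(s^{1/N})$, and reading off the leading monomials of $\psi$ and $\psi'$ yields, as $s\to 0^+$,
$$\varphi(s)=a_{k_0}s^{\alpha_0}\big(1+o(1)\big),\qquad \varphi'(s)=\alpha_0\,a_{k_0}\,s^{\alpha_0-1}\big(1+o(1)\big)\quad(s\to 0^+),$$
the $o(1)$ terms coming from the strictly higher powers of $s^{1/N}$. Consequently the quantity governing the differential inequality satisfies
$$\frac{s\varphi'(s)}{\varphi(s)}=\frac{\alpha_0\,a_{k_0}\,s^{\alpha_0}\big(1+o(1)\big)}{a_{k_0}\,s^{\alpha_0}\big(1+o(1)\big)}\xrightarrow[s\to 0^+]{}\alpha_0>0.$$

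Finally, fixing any constant $c\in(0,\alpha_0)$ — for instance $c=\alpha_0/2$ — the limit above produces $r'\in(0,\rho]$ with $s\varphi'(s)/\varphi(s)\ge c$ for all $s\in(0,r')$; multiplying by $\varphi(s)>0$ gives $s\varphi'(s)\ge c\varphi(s)$ on $(0,r')$, which is exactly the moderate behaviour near the origin (one may always restrict a residual or desingularizing function to a smaller interval, so replacing $r$ by $r'$ is harmless). When $\varphi$ is in addition concave one has $\alpha_0\le 1$, which recovers the parenthetical remark that necessarily $c\le 1$. The only genuinely delicate point is the passage from the expansion of $\varphi$ to that of $\varphi'$: I would justify it rigorously by noting that $\psi$ is an honest convergent power series in $t=s^{1/N}$, so its term-by-term differentiation is legitimate and propagates through the chain rule; alternatively, since $\varphi'$ is itself semi-algebraic it admits its own Puiseux expansion, whose leading term is forced to coincide with the formal derivative of that of $\varphi$. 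For the subanalytic case one uses that a one-variable subanalytic germ is semianalytic and hence likewise admits a convergent Puiseux expansion at $0$, so the argument applies without change.
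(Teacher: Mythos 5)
Your proof is correct and is exactly the argument the paper has in mind: the paper offers no written proof beyond the remark that the lemma follows from ``a pretty direct use of the Puiseux Lemma,'' and your Puiseux-expansion computation showing $s\varphi'(s)/\varphi(s)\to\alpha_0>0$ as $s\to 0^+$ is that direct use, spelled out with the necessary care (positivity of the leading exponent, legitimacy of differentiating the expansion, both signs of the leading coefficient). Your restriction to a possibly smaller interval $(0,r')$ is the right reading and matches the paper's own definition, which qualifies moderate behavior as a property \emph{near the origin}.
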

 \smallskip
 
The following theorem asserts that if $\varphi$ has a moderate behavior, $f$ has the global KL  property if, and only if, $f$ has a global error bound. Besides, the desingularizing function in the KL  inequality and the residual function in the error bound are essentially the same, up to a multiplicative constant. As explained through a counterexample in subsection~\ref{locglob}, the equivalence breaks down if one argues in a setting where the derivative $\varphi$ can blow up faster. This result is related to results obtained in  \cite{BolDanLeyMaz,BolDanLew1,corv,LiMorPham,LMNP} and also shares some common techniques.

\begin{theorem}[Characterization of \L ojasiewicz inequalities for convex functions]\label{Theolocal}$\:$\\
Let $f:H\to\Rcupinf$ be a proper, convex and lower-semicontinuous, with $\min f=0$. Let $r_0>0$, $\varphi \in \KK (0,r_0)$, $c>0$, $\rho>0$ and $\bar x\in\argmin f$.
\begin{enumerate}[(i)]	
	\item {\rm [KL inequality implies error bounds]}  If $\varphi'\left(f(x)\right)\|\partial^0 f(x)\|\geq 1$ for all $x\in [0<f<r_0]\cap B(\bar x,\rho)$, then $\dist(x,S)\leq \varphi\left(f(x)\right)$ for all $x\in [0<f<r_0]\cap B(\bar x,\rho)$.
	\item {\rm [Error bounds implies KL inequality]}  Conversely, if $s\varphi'(s)\geq c\varphi(s)$ for all $s\in (0,r_0)$ ($\varphi$ has a moderate behavior), and $\varphi(f(x))\geq \dist(x,S)$ for all $x\in [0<f<r_0]\cap B(\bar x,\rho)$, then $\varphi'\left(f(x)\right)\|\partial^0 f(x)\|\geq c$ for all $x\in [0<f<r_0]\cap B(\bar x,\rho)$.
\end{enumerate}
\end{theorem}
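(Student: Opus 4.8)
The two implications are of quite different natures, so I would handle them separately and in reverse order: (ii) is a pointwise computation combining the subgradient inequality with the moderate-growth differential inequality, while (i) requires the subgradient flow of Theorem~\ref{P:gradient_curve} together with a length estimate. For (ii), fix $x\in[0<f<r_0]\cap B(\bar x,\rho)$ and set $u=\partial^0 f(x)$. Since $P_S(x)\in S$ and $\min f=0$, the subgradient inequality $f(P_S(x))\ge f(x)+\langle u,P_S(x)-x\rangle$ reads $\langle u,x-P_S(x)\rangle\ge f(x)$, so Cauchy--Schwarz gives $f(x)\le\|u\|\,\dist(x,S)$. Inserting the error bound $\dist(x,S)\le\varphi(f(x))$ yields $\|u\|\ge f(x)/\varphi(f(x))$ (note $\varphi(f(x))>0$, since $\varphi$ is increasing and vanishes only at $0$). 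Multiplying by $\varphi'(f(x))>0$ and invoking the moderate behaviour $s\varphi'(s)\ge c\varphi(s)$ at $s=f(x)$ gives $\varphi'(f(x))\|u\|\ge s\varphi'(s)/\varphi(s)\ge c$, which is exactly the claim. This direction carries no analytic difficulty, and it is here — and only here — that moderate growth is used.

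For (i), fix again $x\in[0<f<r_0]\cap B(\bar x,\rho)$ and run the subgradient curve $\chi_x$ of Theorem~\ref{P:gradient_curve}. The decisive preliminary observation is that $\chi_x$ stays in the region where KL is assumed: by item (iii) applied to $z=\bar x\in S$ the map $t\mapsto\|\chi_x(t)-\bar x\|$ is nonincreasing, so $\chi_x(t)\in B(\bar x,\rho)$ for all $t$; and by item (iv) one has $0\le f(\chi_x(t))\le f(x)<r_0$, so the curve lives in $[0\le f<r_0]$. On the range of times where $f(\chi_x(t))>0$ I would then compute, using items (i)--(ii) and the chain rule,
\[
-\frac{d}{dt}\varphi\big(f(\chi_x(t))\big)=\varphi'\big(f(\chi_x(t))\big)\,\|\dot\chi_x(t^+)\|^2\ge\|\dot\chi_x(t^+)\|,
\]
the inequality being precisely the KL hypothesis together with $\|\dot\chi_x(t^+)\|=\|\partial^0 f(\chi_x(t))\|$. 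Integrating over $[0,T]$ bounds the length of the trajectory by $\int_0^T\|\dot\chi_x(t^+)\|\,dt\le\varphi(f(x))-\varphi(f(\chi_x(T)))\le\varphi(f(x))$.

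I would finish by letting $T\to\infty$. Since the total length is bounded by $\varphi(f(x))<\infty$, the curve has finite length, hence is Cauchy and therefore converges \emph{strongly} to its limit $\hat x\in S$ (this upgrades the weak convergence of item (v)). Consequently $\dist(x,S)\le\|x-\hat x\|=\lim_{T\to\infty}\|x-\chi_x(T)\|\le\int_0^\infty\|\dot\chi_x(t^+)\|\,dt\le\varphi(f(x))$, which is the asserted error bound. Should $f(\chi_x(t))$ reach $0$ at a finite time $T^*$, then $\chi_x$ is constant past $T^*$ and the same computation on $[0,T^*]$ gives the conclusion; note also that moderate behaviour plays no role in this implication.

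\textbf{Main obstacle.} All the genuine care is in direction (i). The routine-looking chain rule for $t\mapsto\varphi(f(\chi_x(t)))$ must be justified from the absolute continuity of $t\mapsto f(\chi_x(t))$ furnished by Br\'ezis' theory, which is legitimate because $f(\chi_x(t))$ remains in $(0,r_0)$ away from $S$, exactly the set on which $\varphi\in C^1$; one should also keep track of the behaviour near $t=0$ and near the limit, where $\varphi'$ may blow up. The two structural facts that make the argument close are the nonexpansiveness item (iii), which confines the trajectory to $B(\bar x,\rho)$ so that the KL inequality is available along the \emph{entire} curve, and the finiteness of the length, which simultaneously produces strong convergence and pins the bound at $\varphi(f(x))$.
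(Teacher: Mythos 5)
Your proof is correct and follows essentially the same route as the paper: part (ii) is the identical pointwise argument (subgradient inequality at $P_S(x)$, Cauchy--Schwarz, error bound, moderate growth), and part (i) reproduces inline exactly the length estimate along subgradient curves that the paper delegates to Theorem~\ref{P:1}, including the confinement of the trajectory to $B(\bar x,\rho)\cap[0\le f<r_0]$ and the Cauchy/strong-convergence conclusion. The only difference is presentational: the paper cites Theorem~\ref{P:1} as a black box, while you unfold its proof, which makes your argument self-contained but not a different method.
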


\begin{proof} (i) Recall that the mapping $[0,+\infty)\times \overline{\dom f}\ni (t,x)\to \chi_x(t)$ denotes the semiflow associated to $-\partial f$ (see previous section).   
Since $f$ satisfies Kurdyka-\L ojasiewicz inequality, we can apply Theorem~\ref{P:1} of Section~\ref{s:theory}, to obtain $$\|\chi_x(t)-\chi_x(s)\|\le \varphi(f(\chi_x(t)))-\varphi(f(\chi_x(s))),$$
for each $x\in B(\bar x,\rho)\cap[0<f\le r_0]$ and $0\le t<s$. 
As  established in Theorem~\ref{P:1}, $\chi_x(s)$ must converge strongly to some $\tilde x\in S$ as $s\to\infty$. Take $t=0$ and let $s\to\infty$ to deduce that $\|x-\tilde x\|\le \varphi(f(x))$. Thus $\varphi( f(x))\ge \dist(x,S)$.\\
(ii) Take $x\in[0<f< r_0]\cap B(x,\rho)$ and write $y=P_S(x)$. By convexity, we have
$$0=f(y)\ge f(x)+\langle \partial^0 f(x),y-x\rangle.$$
This implies
$$f(x)\le \|\partial^0 f(x)\|\,\|y-x\|=\dist(x,S)\|\partial^0 f(x)\|\le\varphi (f(x))\|\partial^0 f(x)\|.$$
Since $f(x)>0$, we deduce that
$$1\leq \|\partial^0 f(x)\|\frac{\varphi (f(x))}{f(x)}\leq \frac{1}{c}\|\partial^0 f(x)\|\, \varphi '(f(x)),$$
and the conclusion follows immediately.
\end{proof}
\medskip

In a similar fashion, we can characterize the global existence of a \L ojasiewicz gradient inequality.

\begin{corollary}{\bf(Characterization of \L ojasiewicz inequalities for con\-vex func\-tions: glo\-bal ca\-se)}\label{Theoglobal} 
Let $f:H\to\Rcupinf$ be a proper, convex and lower-semicontinuous, with $\min f=0$. Let $\varphi \in \KK (0,+\infty)$ and $c>0$.
\begin{enumerate}[(i)]	
\item If $\varphi'\left(f(x)\right)\|\partial^0 f(x)\|\geq 1$ for all $x\in [0<f]$, then $\dist(x,S)\leq \varphi\left(f(x)\right)$ for all $x\in [0<f]$.
\item  Conversely, if $s\varphi'(s)\geq c\varphi(s)$ for all $s\in (0,r_0)$ ($\varphi$ has moderate behavior), and $\varphi(f(x))\geq \dist(x,S)$ for all $x\in [0<f]$, then $\varphi'\left(f(x)\right)\|\partial^0 f(x)\|\geq c$ for all $x\in [0<f]$.
\end{enumerate}
\end{corollary}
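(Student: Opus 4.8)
The plan is to read this corollary as the ``$r_0=\rho=+\infty$'' instance of Theorem~\ref{Theolocal}, and to deduce each part from the corresponding part of that theorem by a pointwise localization. Fix an arbitrary $x\in[0<f]$, set $\bar x=P_S(x)$ (well defined, since $S=\argmin f$ is nonempty, closed and convex), and choose any radius $\rho>\|x-\bar x\|=\dist(x,S)$ and any level $r_0>f(x)$. Then $x\in[0<f<r_0]\cap B(\bar x,\rho)$, and $\varphi\in\KK(0,+\infty)$ restricts to an element of $\KK(0,r_0)$; moreover every hypothesis imposed on $[0<f]$ in the corollary holds a fortiori on the smaller set $[0<f<r_0]\cap B(\bar x,\rho)$. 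Thus it suffices to invoke Theorem~\ref{Theolocal} at $x$ for these data $(r_0,\rho,\bar x)$ and then let $x$ range over $[0<f]$.

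For part (ii) this reduction is immediate, and in fact the underlying computation is entirely pointwise. The argument in the proof of Theorem~\ref{Theolocal}(ii) — namely $0=f(\bar x)\ge f(x)+\langle\partial^0 f(x),\bar x-x\rangle$, whence $f(x)\le\|\partial^0 f(x)\|\,\dist(x,S)\le\|\partial^0 f(x)\|\,\varphi(f(x))$, followed by division by $f(x)>0$ and the moderate-behavior inequality $\varphi(f(x))/f(x)\le\varphi'(f(x))/c$ — never uses the ball $B(\bar x,\rho)$ nor the level cutoff $r_0$. It therefore applies verbatim to every $x\in[0<f]$ and yields $\varphi'(f(x))\|\partial^0 f(x)\|\ge c$ directly.

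Part (i) is where the subgradient curve enters and where the only genuine subtlety lies. I would argue directly with the flow $\chi_x$. By Theorem~\ref{P:gradient_curve}, $t\mapsto f(\chi_x(t))$ is nonincreasing with limit $\min f=0$, so the whole trajectory stays in the sublevel set $\{f\le f(x)\}$; and by the monotonicity of $t\mapsto\|\chi_x(t)-z\|$ for $z\in S$, it remains in $\bar B(\bar x,\dist(x,S))$ once $\bar x=P_S(x)$. Since the KL inequality is now assumed on all of $[0<f]$, it holds at every point of the trajectory at which $f>0$, so there is no risk of the curve escaping the region where desingularization is available. Consequently Theorem~\ref{P:1} applies along the entire curve and provides both the length estimate $\|\chi_x(t)-\chi_x(s)\|\le\varphi(f(\chi_x(t)))-\varphi(f(\chi_x(s)))$ for $0\le t<s$ and the strong convergence $\chi_x(s)\to\tilde x\in S$. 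Taking $t=0$, letting $s\to\infty$, and using $\varphi(0)=0$ with $f(\chi_x(s))\to 0$ gives $\dist(x,S)\le\|x-\tilde x\|\le\varphi(f(x))$, as required.

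The main obstacle is precisely this confinement issue in part (i): in the local theorem one must track whether the subgradient curve leaves $B(\bar x,\rho)$ or climbs above level $r_0$, which is exactly what Theorem~\ref{P:1} is built to handle. The point of the global statement is that this difficulty evaporates — KL holds everywhere on $[0<f]$, while the flow simultaneously decreases $f$ and contracts distances to minimizers, so the trajectory can never reach a point where the inequality is unavailable. In this sense the global corollary is genuinely easier than Theorem~\ref{Theolocal}; the only care needed is to center the ball at $\bar x=P_S(x)$, so that property~(iii) of Theorem~\ref{P:gradient_curve} keeps the curve inside it.
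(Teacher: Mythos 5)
Your proposal is correct and takes essentially the same route as the paper, which obtains the corollary ``in a similar fashion'' to Theorem~\ref{Theolocal}: part (ii) is the same pointwise convexity-plus-moderate-behavior computation (which, as you note, never uses the ball or the level cutoff), and part (i) applies Theorem~\ref{P:1} along the subgradient flow exactly as in the proof of Theorem~\ref{Theolocal}(i), your localization at $\bar x=P_S(x)$ with $\rho>\dist(x,S)$, $r_0>f(x)$ being a harmless repackaging of the same argument. Your closing observation that confinement of the trajectory is automatic here (KL holds on all of $[0<f]$, and the flow decreases both $f$ and distances to $S$) is exactly why the global case follows with no extra work.
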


\begin{remark}{\rm (a) Observe the slight dissymmetry between the conclusions of (i) and (ii) in Theorem \ref{Theolocal} and Corollary \ref{Theoglobal}: while a desingularizing function provides directly an error bound in (i), an error bound (with moderate growth) becomes desingularizing after a rescaling, namely $c^{-1}\varphi$. \\
(b) (H\"olderian case) When in $(ii)$ one has $\varphi(s)=\gamma s^{\frac{1}{p}} $ with $p\geq 1$, $\gamma>0$, then the constant $c$ is given by 
\begin{equation}\label{c}
c=\frac{1}{p}.
\end{equation}}
\end{remark}

Analytical aspects linked with the above results, such as connections with  subgradient curves and nonlinear bounds, are discussed in a section devoted to further theoretical aspects of the interplay between KL  inequality and error bounds. We focus here on the essential consequences we expect in terms of algorithms and complexity. With this objective in mind, we first provide some concrete examples in which 	a  KL  inequality with known powers and/or constants can be provided.

\subsection{Examples: computing \L ojasiewicz exponent through error bounds}\label{EBKL}

The method we use for computing \L ojasiewicz exponents is quite simple: we derive an error bound for $f$ with as much information as possible on the constants, and then we use the convexity along with either Theo\-rem \ref{Theolocal} or Co\-rol\-la\-ry \ref{Theoglobal} to compute a desingularizing function together with a domain of desingularization; this technique appears also in \cite{LiMorPham} a paper which only came to our knowledge during the finalization of our article.

\subsubsection{KL inequality for piecewise polynomial convex functions and least squares objective with $\ell^1$ re\-gu\-larization} \label{s:pol} Here, a conti\-nuous func\-tion $f:\R^n\to\R$ is {\em piecewise  polynomial} if there is a partition of $\R^n$ into finitely many polyhedra\footnote{Usual definitions allow the subdomains to be more complex} $P_1,\dots,P_k$, such that $f_i=f\vert_{P_i}$ is a   polynomial for each $i=1,\dots ,k$. The degree of $f$ is defined as $\deg(f)=\max\{\deg(f_i):i=1,\dots,k\}$.  We have the following interesting result from Li \cite[Corollary 3.6]{Li}:

\begin{proposition}[Li \cite{Li}]\label{EB_Poly}
Let $f:\R^n\to \R$ be a piecewise polynomial convex function with $\argmin f\neq\emptyset$. Then, for each $r\geq \min f$, there exists $\gamma_r>0$ such that\
\begin{equation}\label{EBba}f(x)-\min f\geq \gamma_r\,\dist\left(\sfrac{}{}\!x,\argmin f\right)^{(\deg(f)-1)^n+1}\end{equation}
for all $x\in[f\leq r]$.
\end{proposition}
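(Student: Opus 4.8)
The plan is to deduce the error bound from a \L ojasiewicz gradient (KL) inequality with an explicit exponent, and then to convert it using the convexity-to-error-bound mechanism already available in this paper. Assume $\min f = 0$, so that $S=\argmin f$, write $d=\deg f$ and set $\tau=(d-1)^n+1$. First I would reduce to a bounded problem: for a convex $f$ with $S\neq\emptyset$, every nonempty sublevel set $[f\le r]$ has the same recession cone as $S$, so that $\dist(\cdot,S)$ is \emph{bounded} on $[f\le r]$ even when this set is unbounded. Hence it suffices to control $f$ where $\dist(x,S)$ is bounded. On this region the target is equivalent to the gradient inequality $\dist(0,\partial f(x))\ge c_r\,f(x)^{1-1/\tau}$: feeding $\varphi(s)=\gamma_r^{-1/\tau}s^{1/\tau}$ into Theorem~\ref{Theolocal}(i) (KL implies error bounds) turns such a gradient inequality into exactly $f(x)\ge\gamma_r\,\dist(x,S)^{\tau}$. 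In particular, the ``$+1$'' in the exponent $\tau$ is precisely the gain produced by this convex conversion, the extra power one gets when passing from a statement on $\partial f$ to a statement on $f$ itself.

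The heart of the matter is therefore to establish the gradient inequality with \L ojasiewicz exponent $\theta=1-\frac{1}{(d-1)^n+1}$. I would argue by induction on the dimension $n$, slicing along generic affine lines and hyperplanes through the base point $P_S(x)$. The base case $n=1$ is elementary: a one-variable convex (piecewise) polynomial of degree $\le d$ that vanishes together with its derivative at a minimizer behaves like $c\,t^m$ with $m$ even and $m\le d=(d-1)^1+1$, which yields the claimed exponent on any bounded interval. For the inductive step the mechanism is that restricting the gradient map to a lower-dimensional slice and bounding the \L ojasiewicz exponent of the resulting polynomial map costs a factor $(d-1)$ at each dimension, producing $(d-1)^n$ in the gradient exponent. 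Here I would rely on the degree-based effective \L ojasiewicz estimates for polynomial maps (via the Puiseux lemma cited in the paper) to keep the exponent explicit in $d$ and $n$.

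The piecewise aspect is handled by using that $f$ is continuous and semi-algebraic, coinciding on each polyhedron $P_i$ with a polynomial of degree $\le d$; convexity ensures $S$ and the individual restrictions interact cleanly, and the relevant constants are taken as the worst case over the finitely many pieces meeting the region of interest. The step I expect to be the main obstacle is the \emph{uniformity} behind the explicit exponent: a naive one-dimensional restriction of $f$ along the segment $[x,P_S(x)]$ gives only a pointwise exponent $\le d$, but its multiplicative constant degenerates as the base point ranges over $S$ and as the direction varies, and it is exactly the control of this degeneration that forces the exponent up to $(d-1)^n+1$. Making this uniformity quantitative in the degree is the genuinely hard, algebraic-geometric part of the argument, and is where one must invoke effective \L ojasiewicz inequalities for polynomial systems rather than the soft semi-algebraic existence result of Theorem~\ref{GBS}.
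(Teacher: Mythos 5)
The first thing to note is that the paper does not prove this proposition at all: it is imported verbatim from Li \cite[Corollary 3.6]{Li}, and the paper's logic then runs in the \emph{opposite} direction from yours --- it takes Li's error bound as given and converts it into a KL inequality via the moderate-behavior implication (Corollary~\ref{Theoglobal}(ii), yielding Corollary~\ref{C:KL_sparse}). You propose instead to prove the KL inequality first and recover the error bound from Theorem~\ref{Theolocal}(i). That inversion is legitimate in principle, but it concentrates the entire difficulty in a quantitative KL inequality with exponent $1-1/((\deg f-1)^n+1)$, and your proposal does not actually establish it.

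There are two genuine gaps. First, your reduction to a bounded region rests on a false inference: for a convex function all nonempty sublevel sets do share the recession cone of $S=\argmin f$, but this does \emph{not} imply that $\dist(\cdot,S)$ is bounded on $[f\le r]$. Take $f(x,y)=\max\{0,\sqrt{x^2+y^2}-x-1\}$, which is finite-valued and convex on $\R^2$ with $S=\{y^2\le 2x+1\}$ and $[f\le r]=\{y^2\le 2(1+r)x+(1+r)^2\}$; both sets have recession cone $\{(u,0):u\ge 0\}$, yet for $P_t=(t,w_t)$ with $w_t=\sqrt{2(1+r)t+(1+r)^2}$ one computes $\dist(P_t,S)\ge rt/(w_t+1)\to\infty$. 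In fact this $f$ admits no H\"olderian error bound on $[f\le r]$ at all, so your step, which uses nothing beyond convexity, would ``prove'' a false statement. Boundedness (more precisely, taming of the error bound at infinity) is a genuine consequence of piecewise polynomiality and is one of the substantive points Li's argument must establish; it cannot be obtained by soft convex analysis. Second, the heart of your argument --- the induction on dimension in which a generic slice ``costs a factor $(d-1)$'' so as to produce $(d-1)^n$ --- is a heuristic with no mechanism behind it. The Puiseux lemma as used in this paper is a purely qualitative one-variable statement: it gives the existence of \emph{some} exponent for a semi-algebraic function, never an exponent explicit in $d$ and $n$. The effective \L ojasiewicz inequalities that do exist in the literature concern different quantities (distances to zero sets, isolated singularities, exponents of the form $d(3d-3)^{n-1}$, etc.), do not yield $(d-1)^n+1$, and do not handle the piecewise structure or the global setting; Li's proof is instead a careful induction exploiting convexity and Hoffman-type bounds on the polyhedral pieces. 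A minor additional point: Theorem~\ref{Theolocal}(i) is local around a point of $S$, so even granting the KL inequality you would need the global variant, Corollary~\ref{Theoglobal}(i), or the extrapolation argument of Subsection~\ref{SS:EB}, to conclude on all of $[f\le r]$.
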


Combining Proposition \ref{EB_Poly} and Corollary \ref{Theoglobal}, the above implies:

\begin{corollary}\label{C:KL_sparse}
Let $f:\R^n\to\R$ be a piecewise polynomial convex function with $\argmin f\neq\emptyset$. Then $f$ has the \L ojasiewicz property on $[f\leq r]$, with exponent $\displaystyle\theta=1-\frac{1}{(\deg(f)-1)^n+1}$.
\end{corollary}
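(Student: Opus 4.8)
The plan is to combine the H\"olderian error bound from Proposition~\ref{EB_Poly} with the error-bound-to-KL direction of Corollary~\ref{Theoglobal}(ii), so the real work is bookkeeping to match constants and exponents. First I would set $p=(\deg(f)-1)^n+1$ and rewrite Proposition~\ref{EB_Poly} in residual-function form: the bound $f(x)-\min f\ge \gamma_r\,\dist(x,\argmin f)^p$ on $[f\le r]$ is exactly $\varphi(f(x))\ge\dist(x,\argmin f)$ with $\varphi(s)=\gamma_r^{-1/p}\,s^{1/p}$, after normalizing $\min f=0$ as in the standing assumptions. This $\varphi$ is a power function, hence it lies in $\KK(0,+\infty)$ (it is concave since $p\ge 1$, vanishes at $0$, and has strictly positive derivative on $(0,+\infty)$).

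Next I would verify that $\varphi$ has moderate behavior, which is the hypothesis needed to invoke Corollary~\ref{Theoglobal}(ii). For $\varphi(s)=\gamma_r^{-1/p}s^{1/p}$ one computes $s\varphi'(s)=\frac{1}{p}\varphi(s)$, so the moderate-behavior inequality $s\varphi'(s)\ge c\,\varphi(s)$ holds with equality for $c=\frac{1}{p}$; this is precisely the H\"olderian constant recorded in equation~\eqref{c}. Applying Corollary~\ref{Theoglobal}(ii) on the relevant sublevel set then yields $\varphi'(f(x))\,\|\partial^0 f(x)\|\ge c$ for all $x$ with $0<f(x)$ in the domain, i.e.\ the KL inequality holds with desingularizing function $c^{-1}\varphi = p\,\gamma_r^{-1/p}\,s^{1/p}$ (up to the rescaling noted in the Remark).

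The only genuine subtlety is the domain: Proposition~\ref{EB_Poly} gives the error bound on each sublevel set $[f\le r]$ with a constant $\gamma_r$ that depends on $r$, whereas Corollary~\ref{Theoglobal}(ii) as stated requires the error bound on all of $[0<f]$ to conclude the KL property on $[0<f]$. Since the stated conclusion is the \L ojasiewicz property \emph{on $[f\le r]$} rather than globally, I would apply the implication locally: fix $r$, use the bound valid on $[f\le r]$, and run the argument of Corollary~\ref{Theoglobal}(ii) (equivalently Theorem~\ref{Theolocal}(ii) in its version restricted to a sublevel set rather than a ball) on that set. The desingularizing function $\varphi(s)=c\,s^{1-\theta}$ of the \L ojasiewicz form forces $1-\theta=\frac{1}{p}$, i.e.
$$\theta = 1-\frac{1}{p}=1-\frac{1}{(\deg(f)-1)^n+1},$$
which is exactly the claimed exponent. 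I expect the main obstacle to be simply stating the restriction-to-a-sublevel-set version of the equivalence cleanly, since the error-bound constant $\gamma_r$ varies with $r$ and one must be careful that the KL property is asserted on $[f\le r]$ with the matching $r$-dependent constant; the exponent $\theta$, however, is independent of $r$ and comes directly from the Puiseux-type exponent $p$.
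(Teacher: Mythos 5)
Your proposal is correct and follows exactly the paper's own route: the paper proves this corollary in one line by ``Combining Proposition~\ref{EB_Poly} and Corollary~\ref{Theoglobal}'', i.e.\ the H\"olderian error bound with $p=(\deg(f)-1)^n+1$ plus the error-bound-to-KL direction with moderate-behavior constant $c=1/p$ from \eqref{c}, yielding $\theta=1-1/p$. Your treatment of the domain issue (the bound holds on $[f\le r]$ with an $r$-dependent constant, and the pointwise convexity argument in the proof of part (ii) restricts harmlessly to that sublevel set) is in fact more careful than the paper, which silently glosses over this mismatch between the sublevel-set error bound and the globally stated Corollary~\ref{Theoglobal}.
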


\medskip

\noindent
{\bf Sparse solutions of inverse problems.} Let $f:\R^n\to\R$ be given by 
$$f(x)=\frac{1}{2}\|Ax-b\|_2^2+\mu\|x\|_1,$$
where $\mu>0$, $b\in\R^m$ and $A$ is a matrix of size $m\times n$. Then $f$ is obviously a piecewise polynomial convex function of degree $2$. Since $f$ is also coercive, we have $S=\argmin f\neq\emptyset$. A direct application of Proposition \ref{EB_Poly} and Corollary \ref{C:KL_sparse} gives that $f-\min f$ admits $\theta=\frac{1}{2}$ as a \L ojasiewicz exponent.

Yet, in order to derive proper complexity bounds for ISTA we need to identify a computable constant~$\gamma_r$ in  \eqref{EBba}. For this we shall apply a recent result from Beck-Shtern \cite{amirbeck}.

\medskip

First let us recall some basic results on error bounds (see e.g., \cite{Hof,zal}).  In what follows, $\|M\|$ denotes the {\em spectral} or {\em operator} norm of a real matrix $M$.\footnote{It is the largest singular value of $M$, which is the square root of the largest eigenvalue of the positive-semidefinite square matrix $M^TM$, where $M^T$ is the transpose matrix of $M$.}

\begin{definition}[Hoffman's error bound]\label{d:hof} Given positive integers $m,n,r$, let $A\in \R^{m\times n}$, $a\in \R^m$, $E=\R^{r\times n}$, $e\in \R^r$. We consider the two polyhedra
$$X=\left\{x\in \R^n: Ax\leq a\right\}, \: Y=\left\{x\in \R^n: Ex=e\right\},$$
and we assume that $X\cap Y\neq \emptyset$. There exists a constant $\nu=\nu(A,E)\geq 0$, that only depends on the pair $(A,E)$ and is known as {\em Hoffman's constant} for the pair $(A,E)$, such that 
\begin{equation}\label{hof}
\dist(x,X\cap Y)\leq \nu \|Ex-e\|, \:\forall x\in X.
\end{equation}
\end{definition}
A crucial aspect of Hoffman's  error bound is the possibility of estimating the constant $\nu$ from the data $A,E$. We will not enter into these details here, we simply refer the reader  to the work of Z\v{a}linescu \cite{zal} and the references therein.  

As suggested by Beck, we shall now apply a very useful result from \cite{amirbeck} to derive an error bound for~$f$. 
Recall that $S=\argmin_{\R^n} f$ is convex, compact and nonempty. For any $x^* \in S$,  $f(x^*)\leq f(0)=\frac{1}{2}\|b\|^2$ which implies $\|x^*\|_1\leq \frac{\|b\|^2}{2\mu }$. Hence $S\subset \left\lbrace x\in \R^n: \|x\|_1\leq R\right\rbrace$ for any fixed $R> \frac{\|b\|^2}{2\mu}$. For such a bound $R$, one has 
\begin{align}
\min_{\R^n} f =&\min \left\{ \frac{1}{2}\|Ax-b\|^2_2+\mu \|x\|_1\,:x\in\R^n\right\}\notag\\
=&\min \left\{\frac{1}{2}\|Ax-b\|^2_2+\mu y \: : (x,y)\in \R^n\times\R, \,\|x\|_1\leq R, \, y=\|x\|_1 \right\}\notag\\
=&\min \left\{ \frac{1}{2}\|Ax-b\|^2_2+\mu y  \: : (x,y)\in \R^n\times\R, \,\|x\|_1-y\leq 0,\, y\leq R  \right\}\notag\\
=&\min \left\{ \frac{1}{2}\|\tilde{A}\tilde{x}-\tilde{b}\|^2_2+\langle \tilde{\mu},\tilde{x}\rangle \: : ~\tilde{x}=(x,y)\in \R^{n}\times\R,\: M\tilde{x}\leq \tilde{R} \right\} \label{form}
\end{align}
 where $$\left\{\begin{aligned}
  & \bullet \tilde{A}=[A,0_{\R^{m\times1}}]\in \R^{m\times (n+1)},~ \tilde{b}=(b_1,\ldots,b_m,0)\in \R^{m+1},\\ 
  &\bullet \tilde{\mu}=(0,\ldots,0,\mu)\in \R^{n+1}, ~\tilde{R}=(0,\ldots, 0,R)\in \R^{n+1}\\
 & \bullet M= \begin{bmatrix}
 E & \:\quad-1_{\R^{2^n\times 1}} \\ 
 \quad \:\: 0_{\R^{1\times n}} & 1
 \end{bmatrix} \text{ is a matrix of size }(2^n+1)\times (n+1),\\
 & \text{where $E$ is a matrix of size $2^n\times n$ whose rows are all possible distinct vectors of size $n$}\\
 & \text{  of the form $e_i=(\pm 1,\ldots,\pm 1)$ for all $i=1,\ldots,2^n$. The order of the $e_i$ being arbitrary.}
 \end{aligned}\right.$$
 Set $\tilde{X}:=\left\lbrace \tilde{x}\in \R^{n+1}:M\tilde{x}\leq \tilde{R}\right\}$ and observe that the ``geometrical complexity" of the problem is now embodied in the matrix $M$.\\
  It is clear that  $$(x^*,y^*)\in \tilde{S}:=\mathop{\argmin}\limits_{\tilde x\in \tilde{X}}\left( \tilde{f}(\tilde{x}):= \frac{1}{2}\|\tilde{A}\tilde{x}-\tilde{b}\|^2_2+\langle \tilde{\mu},\tilde{x}\rangle\right) \text{ if and only if }\left(x^*\in S\mbox{ and }y^*=\|x^*\|_1\right).$$
 Using  \cite[Lemma 2.5]{amirbeck}, we obtain:   \begin{align}
\dist^2 (\tilde{x},\tilde{S})\leq &\:\,\nu^2 \left( \|\tilde{\mu}\|D+3GD_A+2G^2+2\right) \left(\tilde{f}(\tilde{x})-\tilde{f}(\tilde{x}^*)\right), \: \forall \tilde{x}\in \tilde{X}\notag
 \end{align}
where 
\begin{itemize}
\item $\tilde{x}^*=(x^*,y^*)$ is any optimal point in $\tilde{S}$, 
\item  $\nu$ is the Hoffman constant associated with the couple $(M,[\tilde{A}^T,\tilde\mu^T]^T)$ as in Definition~\ref{d:hof} above.
\item $D$ is the Euclidean diameter of the polyhedron $\tilde{X}=\{(x,y)\in\R^{n+1}: \|x\|_1\leq y\leq R\}$ and is thus the maximal distance between two vertices. Hence $D= 2R$.
\item $G$ is the maximal Euclidean norm of the gradient of  $\frac{1}{2}\|.-\tilde{b}\|^2$ over $\tilde{A}(\tilde{X})$, hence, $G\leq R\|A\|+\|b\|$.
\item $D_A$ is the Euclidean diameter of the set $\tilde{A}(\tilde{X})$, thus $ D_A=\max_{x_i\in X} \|A(x_1-x_2)\|\leq 2R\|A\| $.

\end{itemize}
Therefore, we can rewrite the above inequality as follows
\begin{equation}\label{eqa}\dist^2 (x,S)+(y-y^*)^2\leq \kappa_R \left(\frac{1}{2}\|Ax-b\|^2_2+\mu y-\left(\frac{1}{2}\|Ax^*-b\|^2_2+\mu \|x^*\|_1\right) \right), \forall (x,y)\in \tilde{X},\end{equation}
where 
 \begin{equation}\label{kappa}
 \kappa_R=\nu^2 \left(2R\mu +6\left(R\|A\|+\|b\|\right)R\|A\|+2\left(R\|A\|+\|b\|\right)^2 +2\right).
 \end{equation}
 By taking $y=\|x\|_1$, \eqref{eqa} becomes
   $$\dist^2 (x,S)+(y-y^*)^2\leq \kappa_R (f(x)-f(x^*)),~\forall x\in \R^n,\: \|x\|_1\leq R.$$
We therefore obtain 
\begin{lemma}{\bf (Error bound and KL inequality for the least squares objective with $\ell^1$ regularization)}\label{sparse}
Fix $R> \frac{\|b\|^2}{2\mu}$. Then,
\begin{equation}\label{EBISTA} f(x)-f(x^*)\geq 2 \gamma_R \dist^2 (x,S) \text{ for all  $x$ in $\R^n$  such that   $\|x\|_1\leq R$, }\end{equation}
where 
 \begin{equation}\label{constant}
 \gamma_R= \frac{1}{4\nu^2 \big(1+\mu R +\left(R\|A\|+\|b\|\right)\,\left(4R\|A\|+\|b\|\right)\big)}.
 \end{equation}
 As a consequence $f$ is a KL function on the $\ell^1$ ball of radius $R$ and admits $\varphi(s)=\sqrt{2\gamma_R^{-1}\,s}$ as desingularizing function.
 \end{lemma}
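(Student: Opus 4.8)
The plan is to read off the result from two ingredients that are already in place: the constrained reformulation \eqref{form} together with the quadratic growth estimate derived from \cite[Lemma 2.5]{amirbeck}, and the equivalence between error bounds and KL inequalities of Theorem~\ref{Theolocal}. I would start from the inequality obtained just above the statement,
$$\dist^2(x,S)+(y-y^*)^2\leq \kappa_R\left(f(x)-f(x^*)\right),\qquad \forall\,x\in\R^n,\ \|x\|_1\leq R,$$
valid with $y=\|x\|_1$ and $\kappa_R$ given by \eqref{kappa}. The first step is the trivial but essential observation that $(y-y^*)^2\geq 0$: discarding this term leaves the pure quadratic error bound $\dist^2(x,S)\leq \kappa_R\left(f(x)-f(x^*)\right)$.

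The second step is purely arithmetic: I would check that $\kappa_R=\tfrac12\gamma_R^{-1}$, that is, that \eqref{kappa} and \eqref{constant} are consistent. Writing $u=R\|A\|$ and $v=\|b\|$, one expands $\kappa_R/\nu^2=2R\mu+6(u+v)u+2(u+v)^2+2=2R\mu+8u^2+10uv+2v^2+2$, and compares it with $2\bigl(1+\mu R+(u+v)(4u+v)\bigr)=2+2\mu R+8u^2+10uv+2v^2$; the two agree, so $1/\kappa_R=2\gamma_R$ and the error bound \eqref{EBISTA} follows at once. This is the only computation to carry out and it is routine.

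For the KL consequence I would invoke Theorem~\ref{Theolocal}(ii) (equivalently Corollary~\ref{Theoglobal}(ii)). Rewriting \eqref{EBISTA} as $\dist(x,S)\leq \omega(f(x)-\min f)$ exhibits the residual $\omega(s)=(2\gamma_R)^{-1/2}\sqrt{s}$, a H\"olderian function $\gamma s^{1/p}$ with $p=2$; it has a moderate behavior with constant $c=1/p=1/2$, since $s\omega'(s)=\tfrac12\omega(s)$. The pointwise convexity argument in the proof of part (ii) uses only $0=f(P_S(x))\geq f(x)+\langle \partial^0 f(x),P_S(x)-x\rangle$, so it applies verbatim on the $\ell^1$ ball $\{\|x\|_1\leq R\}$ and yields $\omega'(f(x))\|\partial^0 f(x)\|\geq \tfrac12$. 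Rescaling by $c^{-1}=2$, as noted in the Remark, produces the genuine desingularizing function $\varphi=2\omega$, and since $2(2\gamma_R)^{-1/2}=(2\gamma_R^{-1})^{1/2}$ this is exactly $\varphi(s)=\sqrt{2\gamma_R^{-1}s}$.

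The real content of the lemma lies not in these final steps but in the preceding construction: recasting the $\ell^1$-penalised least squares problem as the linearly constrained problem \eqref{form}, whose $2^n$-row matrix $M$ encodes the constraint $\|x\|_1\leq y$, so that the Hoffman-based estimate of \cite{amirbeck} applies, and then bounding the geometric quantities $D$, $G$, $D_A$ in terms of $R$, $\|A\|$ and $\|b\|$. Were one to prove the statement from scratch, the main obstacle would be exactly this setup --- identifying the correct Hoffman pair $(M,[\tilde A^T,\tilde\mu^T]^T)$, handling the exponentially many sign constraints, and tracking the constants so that $\kappa_R$ collapses to $\tfrac12\gamma_R^{-1}$. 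Once \eqref{eqa} is available, the passage to \eqref{EBISTA} and the identification of the desingularizing function are immediate.
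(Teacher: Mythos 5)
Your proposal is correct and follows exactly the paper's route: the lemma is obtained there, too, by dropping the nonnegative term $(y-y^*)^2$ from \eqref{eqa}, using the arithmetic identity $2\gamma_R=1/\kappa_R$ (which you verify correctly), and then invoking the error-bound-to-KL direction of Theorem~\ref{Theolocal} with the H\"olderian residual ($p=2$, $c=1/2$) and the rescaling $\varphi=c^{-1}\omega=\sqrt{2\gamma_R^{-1}s}$. The paper leaves these last steps implicit (``We therefore obtain'', ``As a consequence''), and your filling-in of them, including the observation that the pointwise convexity argument applies verbatim on the $\ell^1$ ball, is accurate.
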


\subsubsection{Distances to an intersection: convex feasibility}\label{s:feas}   
For $m\geq 2$, one considers clo\-sed con\-vex subsets $C_1, \ldots, C_m$ of $H$ whose  intersection  contains a nonempty open ball.  This proposition is a quantitative version of  \cite[Corollary 3.1]{BeckTeboulle}.
\begin{proposition}\label{deb}
Suppose that there is $\bar{x}\in H$ and $R>0$ such that 
\begin{equation}\label{ball}
B(\bar{x}, R)\subset \mathop \bigcap\limits_{i=1}^m C_i.
\end{equation}
Then,
	\begin{equation}{\label{fb1}}
		\dist(x, \mathop\cap_{i=1}^m C_i)\leq \left(1+\frac{2\|x-\bar{x}\|}{R}\right)^{m-1}\max \left\lbrace \dist(x,C_i), {i=1,\cdots,m}\right\rbrace, \:\: \forall x \in H.
	\end{equation}
		
\end{proposition}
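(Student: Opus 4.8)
The plan is to avoid peeling off the sets one at a time (which is precisely what produces a power of $m$) and instead to exhibit, for a given $x$, a \emph{single} nearby point lying in all the $C_i$ simultaneously. The whole argument rests on one elementary ``ice-cream cone'' observation that I would establish first: if $C$ is closed convex with $\overline{B}(\bar x,R)\subset C$ and $p\in C$, then for every $s\in[0,1]$ one has $\overline{B}\big((1-s)p+s\bar x,\,sR\big)\subset C$. Indeed, any point of this ball has the form $(1-s)p+s(\bar x+v)$ with $\|v\|\le R$, so it is a convex combination of $p\in C$ and $\bar x+v\in\overline{B}(\bar x,R)\subset C$. This quantifies the ``fatness'' of each $C_i$ along the segment joining any of its points to the deep interior point $\bar x$.

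Next I would fix $x$, set $\epsilon=\max_{i}\dist(x,C_i)$ (if $\epsilon=0$ then $x\in\bigcap_iC_i$ and there is nothing to prove), and put $p_i=P_{C_i}(x)$, so that $\|x-p_i\|=\dist(x,C_i)\le\epsilon$. For $s\in[0,1]$, consider the shrunk point $x_s=(1-s)x+s\bar x$. The identity $x_s=\big((1-s)p_i+s\bar x\big)+(1-s)(x-p_i)$ shows that $x_s$ lies within distance $(1-s)\dist(x,C_i)$ of the center $(1-s)p_i+s\bar x$; by the lemma, $x_s\in C_i$ as soon as $(1-s)\dist(x,C_i)\le sR$. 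The key step is the choice $s=\dfrac{\epsilon}{R+\epsilon}$, which makes $(1-s)\epsilon=sR$, so that $(1-s)\dist(x,C_i)\le(1-s)\epsilon\le sR$ holds for \emph{every} $i$ at once; hence $x_s\in\bigcap_{i=1}^mC_i$.

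Since $x_s\in\bigcap_iC_i$, I then obtain directly
\[
\dist\Big(x,\textstyle\bigcap_{i=1}^mC_i\Big)\le\|x-x_s\|=s\,\|x-\bar x\|=\frac{\epsilon}{R+\epsilon}\,\|x-\bar x\|\le\frac{\|x-\bar x\|}{R}\,\max_i\dist(x,C_i),
\]
which is in fact stronger than the stated claim; the latter follows from the crude estimate $\frac{\|x-\bar x\|}{R}\le 1+\frac{\|x-\bar x\|}{R}\le\big(1+\frac{2\|x-\bar x\|}{R}\big)^{m-1}$, valid for all $m\ge2$. The only genuine obstacle is to land in all sets simultaneously: a naive argument projecting onto the sets successively increases the residual distance at each step and thereby forces the exponent $m-1$. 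Using the ball $B(\bar x,R)$ through the cone lemma bypasses this entirely, because the single move toward $\bar x$ repairs the violation of every constraint at once, the binding constraint being the set that realizes the maximal distance $\epsilon$.
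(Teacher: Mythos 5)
Your proof is correct, and it takes a genuinely different --- and in fact sharper --- route than the paper's. The paper first proves the case $m=2$ by an explicit construction: it forms $P_{C_2}P_{C_1}(x)$ and exhibits a point $z$, a convex combination of $\bar{x}$ and $P_{C_1}(x)$, which is shown to lie in $C_1\cap C_2$; this yields the factor $\bigl(1+\frac{2\|x-\bar{x}\|}{R}\bigr)$, and the general case follows by applying this two-set estimate recursively to $C_1$ and $\bigcap_{i\ge2}C_i$, each of the $m-1$ applications multiplying the bound by that factor --- whence the exponent. Your argument replaces the recursion by a single simultaneous repair: the cone lemma (valid here because each $C_i$ is closed, so $B(\bar{x},R)\subset C_i$ gives $\overline{B}(\bar{x},R)\subset C_i$) shows that moving $x$ toward $\bar{x}$ by the single ratio $s=\epsilon/(R+\epsilon)$, where $\epsilon=\max_i\dist(x,C_i)$, lands in \emph{every} $C_i$ at once, since the same $s$ handles the binding (worst) constraint. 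This buys a strictly stronger, $m$-independent estimate $\dist\bigl(x,\bigcap_{i=1}^m C_i\bigr)\le \frac{\|x-\bar{x}\|}{R}\max_i\dist(x,C_i)$ --- note this is consistent even though the factor can be below $1$, since $\max_i\dist(x,C_i)>0$ forces $x\notin B(\bar{x},R)$, i.e.\ $\|x-\bar{x}\|\ge R$ --- and the stated inequality then follows from $\frac{\|x-\bar{x}\|}{R}\le\bigl(1+\frac{2\|x-\bar{x}\|}{R}\bigr)^{m-1}$ for $m\ge2$. Beyond elegance, your bound would propagate: the constants $M$ and $M'$ of \eqref{E:M_barycentric} and \eqref{E:M'_alternating}, and hence the complexity estimates for the barycentric and alternating projection methods in Section~\ref{s5}, would all improve, with the dependence on $m$ in $M$ disappearing entirely. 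The paper's route offers nothing that yours does not, except that it is phrased purely through projection operators; your "shrink toward the Slater point" argument is the classical linear-regularity proof and is the better one here.
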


\begin{proof}
We assume $m=2$ in a first stage. Put $C=C_1\cap C_2$,  $d=2\max \left\lbrace \dist(x,C_1),\dist(x,C_2)\right\rbrace$ and  fix $x\in H$. The function $\dist(\cdot,C_2)$ is  Lipschitz  continuous with constant 1. 
Thus, $$|\dist(P_{C_1}(x), C_2)-\dist(x,C_2)|\leq \|x-P_{C_1}(x)\|$$ and so
$$\dist(P_{C_1}(x),C_2)\leq \dist(x,C_1)+\dist(x,C_2)\leq d.$$
By taking $y=\bar{x}+\frac{R}{d} (P_{C_1}(x)-P_{C_2}P_{C_1}(x))$, we deduce that $y\in B(\bar x,R)\subset C_1\cap C_2$. Now, we construct a specific point $z\in C$ as follows
	$$z=\frac{d}{R+d}\,y+\frac{R}{R+d}P_{C_2}P_{C_1}(x).$$
Obviously $z$ is in $C_2$, and if we replace $y$ in $z$ by $\bar{x}+\frac{R}{d} \big(P_{C_1}(x)-P_{C_2}P_{C_1}(x)\big)$, we obtain
	$$z=\frac{d}{R+d}\bar{x}+\frac{R}{R+d}P_{C_1}(x) \in C_1,$$
This implies that $z\in C_1\cap C_2$. Therefore 
	$$\dist(x,C)\leq \|x-z\|\leq \|x-P_{C_1}(x)\|+\|z-P_{C_1}(x)\|,$$
and, since $\bar x \in C_1\cap C_2$, 
	$$\|z-P_{C_1}(x)\|=\frac{d}{R+d}\|\bar{x}-P_{C_1}(x)\|=\frac{d}{R+d}\|P_{C_1}(\bar{x})-P_{C_1}(x)\|\leq \frac{d}{R+d}\|\bar{x}-x\|.$$	
By combining the above results, we have $\dist(x,C)\leq \frac{d}{2}+\frac{d}{R+d}\|x-\bar{x}\|,$
which gives
	\begin{equation}{\label{fb2}}
	\dist(x,C)\leq \left(1+\frac{2\|x-\bar{x}\|}{R}\right) \max \left\lbrace \dist(x,C_1), \dist(x,C_2)\right\rbrace.
	\end{equation}	 
For arbitrary $m\ge 2$, applying \eqref{fb2} for the two sets $C_1$ and $\mathop\cap\limits_{i=2}^m C_i$, we obtain
$$\dist(x,\mathop\cap_{i=1}^m C_i)\leq \left(1+\frac{2\|x-\bar{x}\|}{R}\right)\max \left\lbrace \dist(x,C_1), \dist(x,\mathop\cap_{i=2}^m C_i)\right\rbrace.$$
Repeating the process $(m-1)$ times, we obtain \eqref{fb1}.
\end{proof}

\bigskip

\noindent{\bf A potential function for the barycentric projection method.} Let $C:=\displaystyle \cap_{i=1}^m C_i$. If $C\neq\emptyset$, finding a point in $C$ is equivalent to minimizing the following convex function over~$H$
\begin{equation}\label{f:feasible}
 f(x)=\frac{1}{2}\sum_{i=1}^m \alpha_i\dist^2(x,C_i),
 \end{equation}
where $\alpha_i>0$ for all $i=1,\ldots,m$ and $\sum_1^m \alpha_i=1$. As we shall see in the next section, the gradient method applied to $f$ yields the {\em barycentric projection method} (introduced in \cite{freddy}; see also \cite{comb,BeckTeboulle}). We now provide an error bound  for $f$ under assumption \eqref{ball}.
 
 \smallskip
It is clear that $C=\argmin f=\left\lbrace x\in H:f(x)=0\right\rbrace$. Fix any $x_0\in H$. From Proposition~\ref{deb}, we obtain that $f$ has the following local error bound:

$$\dist (x,C)\leq \left(1+\frac{2\|x_0-\bar{x}\|}{R}\right)^{m-1}\left(\frac{2}{\min\limits_{i=1,\dots,m}\alpha_i} \right)^{\frac{1}{2}} \sqrt{f(x)}, \:\: \forall x\in B(\bar{x}, \|x_0-\bar{x}\|).$$ 

Combining with Theorem \ref{Theolocal}, we deduce that $f$ satisfies the \L ojasiewicz inequality on $B(\bar{x}, \|x_0-\bar{x}\|)\cap [0<f]$ with  desingularizing function $\varphi(s)=\sqrt{\frac{2}{M}s\,}$, where


\begin{equation} \label{E:M_barycentric}
M=\frac{1}{4}\left(1+\frac{2\|x_0-\bar{x}\|}{R}\right)^{2-2m}\min\limits_{i=1,\dots,m}\alpha_i.
\end{equation}

\bigskip

 \noindent
{\bf A potential function for the alternating projection method.} Assume now that $m=2$, and set $g=i_{C_1}+\frac{1}{2}\dist(\cdot,C_2)^2$ -- a function related to the alternating projection method, as we shall see in a Section~\ref{s5}. One obviously has $g(x)\geq \frac{1}{2}\big(\dist^2(x,C_1)+\dist^2(x,C_2)\big)$  for all $x\in  H$. From the above remarks, we deduce that 
$$\dist(x,C)\leq 2\left(1+\frac{2\|x_0-\bar{x}\|}{R}\right)\sqrt{g(x)}, \forall x\in B(\bar{x}, \|x_0-\bar{x}\|).$$
Hence, $g$ satisfies the \L ojasiewicz inequality on $B(\bar{x}, \|x_0-\bar{x}\|)\cap [0<g]$ with desingularizing function given by $$\varphi(s)=\sqrt{\frac{2}{M'}\,s\,},$$
where 

\begin{equation} \label{E:M'_alternating}
M'=\frac{1}{8}\left(1+\frac{2\|x_0-\bar{x}\|}{R}\right)^{-2}.
\end{equation}



\section{Complexity for first-order methods with sufficient decrease condition}
\label{s4}

In this section, we derive complexity bounds for first-order methods with a sufficient decrease condition, under a KL inequality. In what follows, we assume, as before, that $f:H\to \Rcupinf$ is a proper lower-semicontinuous convex function such that $S=\argmin f\neq\emptyset$ and $\min f=0$.\\

\subsection{Subgradient sequences} We recall, from \cite{AttBolSva}, that $(x_k)_{k\in\N}$ in $H$ is a {\em subgradient descent sequence} for $f:H\to\Rcupinf$ if $x_0\in\dom f$ and there exist $a,b>0$ such that:

\begin{itemize}
	\item [({\bf H1})] (Sufficient decrease condition) For each $k\geq1$,
		$$f(x_k)+a\|x_k-x_{k-1}\|^2\le f(x_{k-1}).$$
	\item [({\bf H2})] (Relative error condition) For each $k\geq 1$, there is $\omega_{k}\in\partial f(x_k)$ such that
		$$\|\omega_{k}\|\le b\|x_k-x_{k-1}\|.$$
\end{itemize}

We point out that an additional continuity condition $-$ which is not necessary here because of the convexity of $f$ $-$ was required in \cite{AttBolSva}.

It seems that these conditions were first considered in the seminal and inspiring work of Luo-Tseng \cite{LuoTseng}. They were used to study convergence rates from error bounds. We adopt partly their views and we provide a double improvement: on the one hand, we show how complexity can be tackled for such dynamics, and, on the other hand, we provide a general methodology that will hopefully be used for many other methods than those considered here.

The motivation behind this definition is due to the fact that such sequences are generated by many prominent methods, such as the forward-backward method \cite{LuoTseng,AttBolSva,PierreGuiJuan} (which we describe in detail below), many trust region methods \cite{AbsMahAnd}, alternating methods \cite{AttBolSva,BST}, and, in a much more subtle manner, sequential quadratic methods and a wealth of majorization-minimization methods \cite{BP,Ed}. In Section~\ref{s5}, we essentially focus on the forward-backward method because of its simplicity and its efficiency. Clearly, many other examples could be worked out.

\begin{remark}[Explicit step for Lipschitz continuous gradient]\label{lip}{\rm 
If $f$ is smooth and its gradient is Lipschitz continuous with constant $L$, then any sequence satisfying:\\ 
 $$ \text{({\bf H2'})  $\quad$ For each } k\geq 1, \, \|\nabla f(x_{k-1})\|\le b\|x_k-x_{k-1}\|,$$
also satisfies \htwo. \\
Indeed, for every $k\ge 1$, 
$$\|\nabla f(x_{k})\| \le \|\nabla f(x_{k-1})\|+\|\nabla f(x_{k})-\nabla f(x_{k-1})\|
   \le   b\|x_k-x_{k-1}\|+L\|x_k-x_{k-1}\| 
   =  (b+L)\|x_k-x_{k-1}\|.$$
}
\end{remark}

\noindent
\begin{ejem}[The  forward-backward splitting method.] {\rm The {\em forward-backward  splitting} or {\em proximal gradient} method is an important model algorithm, although many others could be considered in the general setting we provide (see \cite{AttBolSva,BST,PierreGuiJuan}). Let $g:H\to \Rcupinf$ be a proper lower-semicontinuous convex function and let $h:H\to \R$ be a smooth convex function whose gradient is  Lipschitz  continuous with constant $L$. In order to minimize $g+h$ over $H$, the forward-backward method generates a sequence $(x_k)_{k\in\N}$ from a given starting point $x_0\in H$, and using the recursion
\begin{equation} \label{E:FB}
x_{k+1}\in\argmin\left\{g(z)+\langle \nabla h(x_k),z-x_k\rangle + \frac{1}{2\lambda_{k}}\|z-x_k\|^2:\,z \in H\right\}
\end{equation} 
for $k\geq 1$. By the strong convexity, lower-semicontinuity of the argument in the right-hand side and weak topology arguments, the set of minimizers has exactly one element. On the other hand, it is easily seen that \eqref{E:FB} is equivalent to
$$x_{k+1}\in\argmin\left\{g(z)+ \frac{1}{2\lambda_{k}}\|z-(x_k-\lambda_k\nabla h(x_k))\|^2:\,z \in H\right\}.$$
Moreover, using the proximity operator defined in Subsection \ref{SS:convex_analysis}, the latter can be rewritten as
\begin{equation}\label{FB}
x_{k+1}=\Prox_{\lambda_kg}\left(x_k-\lambda_k\nabla h(x_k)\right).
\end{equation}
When $h=0$, we obtain the {\em proximal point algorithm} for $g$. On the other hand, if $g=0$ it reduces to the classical {\em explicit gradient method} for $h$. \\

We shall see that the forward-backward method generates subgradient descent sequences if the step sizes are properly chosen.

\begin{proposition}\label{P:FB_ab}
Assume now that $0<\lambda^-\le\lambda_{k}\le\lambda^+<2/L$ for all $k\in\N$. Then
$\hone$ and $\htwo$ are satisfied for the forward-backward splitting method \eqref{FB} with 
$$a=\frac{1}{\lambda^+}-\frac{L}{2}\quad\text{ and }\quad b=\frac{1}{\lambda^-}+L.$$
\end{proposition}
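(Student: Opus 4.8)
The plan is to exploit two structural facts about the forward-backward iterate $x_{k+1}=\prox_{\lambda_k g}\big(x_k-\lambda_k\nabla h(x_k)\big)$: its variational characterization as the minimizer of a strongly convex model, and the descent lemma for the smooth part $h$. Throughout I would work with $f=g+h$ and keep the indexing of the recursion \eqref{FB}, shifting $k+1\mapsto k$ at the end to match $\hone$ and $\htwo$.

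To establish $\hone$, I would use that $x_{k+1}$ minimizes the $\tfrac{1}{\lambda_k}$-strongly convex function $z\mapsto g(z)+\langle \nabla h(x_k),z-x_k\rangle+\tfrac{1}{2\lambda_k}\|z-x_k\|^2$. Comparing the value at $z=x_k$ with the value at the minimizer $x_{k+1}$, and invoking the quadratic lower bound furnished by $\tfrac{1}{\lambda_k}$-strong convexity, yields $g(x_k)\ge g(x_{k+1})+\langle \nabla h(x_k),x_{k+1}-x_k\rangle+\tfrac{1}{\lambda_k}\|x_{k+1}-x_k\|^2$. Note that it is strong convexity, rather than mere minimality, that produces the full coefficient $\tfrac{1}{\lambda_k}$ instead of $\tfrac{1}{2\lambda_k}$. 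I would then add the descent lemma $h(x_{k+1})\le h(x_k)+\langle \nabla h(x_k),x_{k+1}-x_k\rangle+\tfrac{L}{2}\|x_{k+1}-x_k\|^2$; the inner-product terms cancel, giving $f(x_{k+1})\le f(x_k)-\big(\tfrac{1}{\lambda_k}-\tfrac{L}{2}\big)\|x_{k+1}-x_k\|^2$. Bounding $\lambda_k\le\lambda^+$ gives $\tfrac{1}{\lambda_k}-\tfrac{L}{2}\ge \tfrac{1}{\lambda^+}-\tfrac{L}{2}=a$, and the hypothesis $\lambda^+<2/L$ guarantees $a>0$.

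For $\htwo$, I would read off the first-order optimality condition for $x_{k+1}$, namely $-\nabla h(x_k)-\tfrac{1}{\lambda_k}(x_{k+1}-x_k)\in\partial g(x_{k+1})$. Since $h$ is smooth, the Moreau-Rockafellar sum rule gives $\partial f(x_{k+1})=\partial g(x_{k+1})+\nabla h(x_{k+1})$, so the natural candidate subgradient is $\omega_{k+1}=\nabla h(x_{k+1})-\nabla h(x_k)-\tfrac{1}{\lambda_k}(x_{k+1}-x_k)\in\partial f(x_{k+1})$. A triangle inequality together with the $L$-Lipschitz continuity of $\nabla h$ and the bound $\lambda_k\ge\lambda^-$ then yields $\|\omega_{k+1}\|\le \big(L+\tfrac{1}{\lambda^-}\big)\|x_{k+1}-x_k\|=b\|x_{k+1}-x_k\|$, which is $\htwo$.

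I do not anticipate a genuine obstacle, since the argument is essentially careful bookkeeping. The one subtle point, and the step I would handle most carefully, is the use of strong convexity in $\hone$: a naive comparison of model values only delivers the coefficient $\tfrac{1}{2\lambda_k}$, which would force the weaker constant $a=\tfrac{1}{2\lambda^+}-\tfrac{L}{2}$ and the more restrictive step range $\lambda^+<1/L$. Using the sharp quadratic lower bound at the minimizer is precisely what recovers the stated constant $a=\tfrac{1}{\lambda^+}-\tfrac{L}{2}$ and the full admissible range $\lambda^+<2/L$.
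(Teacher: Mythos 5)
Your proof is correct, and for \htwo\ it coincides with the paper's argument: both identify the same candidate subgradient $\omega_{k+1}=\nabla h(x_{k+1})-\nabla h(x_k)-\frac{1}{\lambda_k}(x_{k+1}-x_k)\in\partial f(x_{k+1})$ via the optimality condition and the Moreau--Rockafellar sum rule, then apply the triangle inequality and the Lipschitz bound. The only divergence is in \hone: the paper does not prove the sufficient decrease inequality at all, but simply cites it as the ``fundamental inequality'' of Bolte--Sabach--Teboulle \cite[Remark 3.2(iii)]{BST}, whereas you derive it from scratch by combining the $\frac{1}{\lambda_k}$-strong convexity of the proximal model (evaluated at $z=x_k$ against the minimizer $x_{k+1}$) with the descent lemma for $h$. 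Your derivation is exactly the standard proof of that cited inequality, so the mathematical content is the same; what your version buys is self-containedness, and your closing remark correctly isolates the one genuinely delicate point, namely that the sharp quadratic growth at the minimizer (rather than mere minimality of $x_{k+1}$, which would only give the coefficient $\frac{1}{2\lambda_k}$) is what yields $a=\frac{1}{\lambda^+}-\frac{L}{2}$ and the full step-size range $\lambda^+<2/L$. Equivalently, one can obtain the same coefficient $\frac{1}{\lambda_k}$ by writing the subgradient inequality for $g$ at $x_{k+1}$ with the explicit subgradient $-\nabla h(x_k)-\frac{1}{\lambda_k}(x_{k+1}-x_k)$; both routes are standard and equivalent here.
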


\begin{proof} 
Take $k\geq 0$. For the constant $a$, we use the fundamental inequality provided in \cite[Remark 3.2(iii)]{BST}:
$$g(x_{k+1})+h(x_{k+1})\leq g(x_k)+h(x_k)-\left(\frac{1}{\lambda_{k}}-\frac{L}{2}\right)\|x_{k+1}-x_k\|^2\leq g(x_k)+h(x_k)-\left(\frac{1}{\lambda^+}-\frac{L}{2}\right)\|x_{k+1}-x_k\|^2.$$
For $b$, we proceed as in Remark~\ref{lip} above. Using  the Moreau-Rockafellar Theorem, the optimality condition for the forward-backward method is given by
$$\omega_{k+1}+\nabla h(x_k)+\frac{1}{\lambda_{k}}(x_{k+1}-x_k)=0,$$
where $\omega_{k+1}\in \partial g(x_{k+1})$. Using the Lipschitz continuity of $\nabla h$, we obtain 
$$\|\omega_{k+1} +\nabla h(x_{k+1})\|\leq \left(\frac{1}{\lambda_{k}}+L\right)\|x_{k+1}-x_k\|\leq \left(\frac{1}{\lambda^-}+L\right)\|x_{k+1}-x_k\|,$$ 
as claimed.
\end{proof}

\:
If $f=g+h$ has the KL property, Theorem \ref{T:f_to_x} below guarantees the strong convergence of every sequence generated by the forward-backward method.}
\end{ejem}

Convergence of subgradient descent sequences follows readily from \cite{AttBolSva} and \cite{BST,PierreGuiJuan}. Although this kind of result has now became standard, we provide a direct proof for estimating  thoroughly  the constants at stake.

\begin{theorem}{\bf (Convergence of subgradient descent methods in a Hilbertian convex setting)}\label{T:f_to_x}
Assume that $f:H\to\Rcupinf$ is a proper lower-semicontinuous convex function which has the KL 
property on $[0<f<\bar r]$ with desingularizing function $\varphi\in  \KK(0,\bar r)$. We consider a 
subgradient descent sequence $(x_k)_{k\in\N}$ such that $f(x_0)\leq r_0<\bar r$. Then, $x_k$ converges 
strongly to some $x^*\in\argmin f$ and
\begin{equation}\label{ineq}
\|x_k-x^*\|\leq \frac{b}{a} \varphi(f(x_k)) +\sqrt{\frac{f(x_{k-1})}{a}}, \: \forall k\geq 1.   
\end{equation}
\end{theorem}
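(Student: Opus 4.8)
The plan is to follow the now-classical Attouch--Bolte--Svaiter scheme, but keeping explicit track of the constants $a,b$ and of the desingularizing function $\varphi$. First I would record the basic monotonicity: by \hone{} the sequence $(f(x_k))_k$ is nonincreasing, so $0\le f(x_k)\le f(x_0)\le r_0<\bar r$ for every $k$, which keeps all iterates inside the region $[0<f<\bar r]$ on which the KL inequality is available. If $f(x_{k_0})=0$ for some $k_0$, then $x_{k_0}\in\argmin f$, \hone{} forces $x_k=x_{k_0}$ for all $k\ge k_0$, and the statement is trivial; so I would assume from now on that $f(x_k)>0$ for all $k$. I would also note that in this regime $\|x_{k-1}-x_{k-2}\|>0$ for every $k\ge2$: otherwise \htwo{} would give $\omega_{k-1}=0\in\partial f(x_{k-1})$, i.e. $f(x_{k-1})=\min f=0$, a contradiction. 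This rules out any division by zero below.

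The heart of the argument is a one-step inequality. Writing $\Delta_k=\|x_k-x_{k-1}\|$ and $\delta_k=\varphi(f(x_k))$, I would chain together four ingredients for $k\ge 2$: concavity of $\varphi$ gives $\delta_{k-1}-\delta_k\ge \varphi'(f(x_{k-1}))\big(f(x_{k-1})-f(x_k)\big)$; the sufficient decrease \hone{} gives $f(x_{k-1})-f(x_k)\ge a\Delta_k^2$; the KL inequality applied at $x_{k-1}\in[0<f<\bar r]$ gives $\varphi'(f(x_{k-1}))\ge \|\partial^0 f(x_{k-1})\|^{-1}$; and, since $\partial^0 f(x_{k-1})$ is the least-norm subgradient, the relative error \htwo{} gives $\|\partial^0 f(x_{k-1})\|\le \|\omega_{k-1}\|\le b\,\Delta_{k-1}$. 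Combining them yields the fundamental estimate
$$\delta_{k-1}-\delta_k\ \ge\ \frac{a}{b}\,\frac{\Delta_k^2}{\Delta_{k-1}},\qquad k\ge 2.$$

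From here the argument is elementary. Rewriting this as $\Delta_k^2\le \tfrac{b}{a}(\delta_{k-1}-\delta_k)\,\Delta_{k-1}$ and applying $2\sqrt{uv}\le u+v$, I obtain the linearized bound $\Delta_k\le \tfrac12\big[\tfrac{b}{a}(\delta_{k-1}-\delta_k)+\Delta_{k-1}\big]$. Summing over $j=k+1,\dots,N$, telescoping the $\delta$-differences and absorbing the factor-$\tfrac12$ shift of the $\Delta$-sum into the left-hand side, gives, after letting $N\to\infty$,
$$\sum_{j=k+1}^{\infty}\Delta_j\ \le\ \frac{b}{a}\,\varphi(f(x_k))+\Delta_k,\qquad k\ge 1.$$
For $k=1$ this already forces $\sum_{j\ge 2}\Delta_j<\infty$, hence $\sum_j\Delta_j<\infty$, so the partial sums of $\|x_j-x_{j-1}\|$ form a Cauchy sequence and $(x_k)$ converges strongly to some $x^*$.

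It remains to identify $x^*$ and to produce \eqref{ineq}. Since $\|\omega_k\|\le b\,\Delta_k\to0$ and $\omega_k\in\partial f(x_k)$ with $x_k\to x^*$, the (demi)closedness of the graph of the maximal monotone operator $\partial f$ gives $0\in\partial f(x^*)$, i.e. $x^*\in\argmin f$; then $f(x_k)\downarrow 0$ follows from the subgradient inequality $f(x_k)\le\langle\omega_k,\,x_k-x^*\rangle\to0$. Finally, $\|x_k-x^*\|\le\sum_{j\ge k+1}\Delta_j\le \tfrac{b}{a}\varphi(f(x_k))+\Delta_k$, and bounding $\Delta_k\le\sqrt{f(x_{k-1})/a}$ via \hone{} yields exactly \eqref{ineq}. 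The main obstacle is the one-step inequality: extracting the precise $\Delta_k^2/\Delta_{k-1}$ shape from the four ingredients, and then the AM--GM linearization that turns this superlinear recursion into a summable one, are the only nonroutine steps; the convergence and the final bound are bookkeeping once that is in place.
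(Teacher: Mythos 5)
Your proposal is correct and follows essentially the same route as the paper's proof: the same one-step estimate obtained by chaining concavity of $\varphi$, \hone, the KL inequality and \htwo\ (your $\delta_{k-1}-\delta_k\ge \frac{a}{b}\Delta_k^2/\Delta_{k-1}$ is the paper's key inequality with indices shifted by one), followed by the same linearization (your AM--GM step $2\sqrt{uv}\le u+v$ is algebraically equivalent to the paper's use of $u^2\ge 2uv-v^2$), telescoping to finite length, closedness of the graph of $\partial f$ to identify $x^*\in\argmin f$, and the bound $\Delta_k\le\sqrt{f(x_{k-1})/a}$ to conclude. The only quibble is your dismissal of the degenerate case as ``trivial'': when $f(x_{k_0})=0$ the bound \eqref{ineq} still requires the same chain of inequalities for the indices $k<k_0$ (where it does apply, as the paper notes), but this is exactly how the paper handles it as well.
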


\begin{proof}  
Using $\hone$, we deduce  that the sequence $(f(x_k))_{k\in\N}$ is nonincreasing, thus $x_k \in[0\leq f<\bar{r}]$.
Denote by $i_0$ the first index $i_0\geq1$ such that $\|x_{i_0}-x_{i_0-1}\|=0$ whenever it exists. If such an $i_0$ exists, one has $\omega_{i_0}=0$, and so, $f(x_{i_0})=0$. This implies that $f(x_{i_0+1})=0$ and thus $x_{i_0+1}=x_{i_0}$ (the sequence is then stationary.)  Hence the upper bound holds provided that it has been established for all $k\leq i_0-1$ in \eqref{ineq}. A similar reasoning applies to the case when $f(x_{i_0})=0$. 

Assume first that $f(x_k)>0$ and $\|x_k-x_{k-1}\|>0$ for all $k\geq1$. 
Combining $\hone$, $\htwo$, and using the concavity of $\varphi$ we obtain
\begin{align}
\varphi(f(x_k))-\varphi(f(x_{k+1}))&\geq \varphi'(f(x_k)) \left(f(x_k)-f(x_{k+1})\right)\notag\\
&\geq \frac{a\|x_k-x_{k+1}\|^2}{b\|x_{k-1}-x_k\|}\notag\\
& \geq\frac{a}{b}\frac{\left(2\|x_k-x_{k+1}\|\|x_k-x_{k-1}\|-\|x_{k-1}-x_k\|^2\right)}{\|x_k-x_{k-1}\|}, \forall k\geq 1.\notag\\
& \geq\frac{a}{b}(2\|x_k-x_{k+1}\|-\|x_{k-1}-x_k\|), \forall k\geq 1.\label{phong}
\end{align}

This implies
$$\frac{b}{a}\big(\varphi(f(x_1))-\varphi(f(x_{k+1}))\big) +\|x_0-x_1\|
\geq \sum_{i=1}^{k}\|x_i-x_{i+1}\|, \forall k\in \N,$$
therefore, the series $\sum_{i=1}^{\infty}\|x_i-x_{i+1}\|$ is convergent, which implies, by the Cauchy criterion ($H$ is complete), that the sequence $(x_k)_{k\in \N}$ converges to some point $x^*\in H$. From $\htwo$, there is a sequence $\omega_k\in \partial f(x_k)$ which  converges to $0$. Since $f$ is convex and lower-semicontinuous, the graph of $\partial f$ is closed in $H\times H$ for the strong-weak (and weak-strong) topology. Thus $0\in \partial f(x^*)$.\\
Coming back to \eqref{phong}, we also infer
 $$\frac{b}{a}(\varphi(f(x_k))-\varphi(f(x_{k+m}))) +\|x_{k-1}-x_k\|
 \geq \sum_{i=k}^{k+m}\|x_i-x_{i+1}\|, \forall k, m\in \N.$$
Combining the latter with $\hone$ yields
 $$\frac{b}{a}(\varphi(f(x_k))-\varphi(f(x_{k+m}))) +\sqrt{\frac{f(x_{k-1})-f(x_k)}{a}} 
  \geq \sum_{i=k}^{k+m}\|x_i-x_{i+1}\|, \forall k, m\in \N.$$
Letting $m\rightarrow \infty$, we obtain
$$\frac{b}{a} \varphi(f(x_k)) +\sqrt{\frac{f(x_{k-1})-f(x_k)}{a}} 
  \geq \|x_k-x^*\|, \forall k\in \N,$$
thus 
$$\frac{b}{a} \varphi(f(x_k)) +\sqrt{\frac{f(x_{k-1})}{a}} 
  \geq \|x_k-x^*\|, \forall k\in \N.$$
The case when $\|x_k-x_{k-1}\|$ or $f(x_k)$ vanishes for some $k$ follows easily by using the argument evoked at the beginning of the proof.\end{proof}

\begin{remark}
{\rm When $f$ is twice continuously differentiable and {\em definable} (in particular, if it is semi-algebraic) it is proved in \cite{BBJ} that $\varphi(s)\geq O(\sqrt{s})$ near the origin. This shows that, in general, the ``worst" complexity is more likely to be induced by $\varphi$ rather than the square root.
}
\end{remark}

\subsection{Complexity for subgradient descent sequences}  

This section is devoted to the study of complexity for first-order descent methods of KL convex functions in Hilbert spaces. \\

Let $0<r_0<\bar r$, we shall assume that $f$ has the KL property on $[0<f<\bar r]$ with desingularizing function $\f\in \cali K(0,\bar r)$ (recall  that $\argmin f\neq\emptyset$ and $\min f=0$.). Whence
$$
\varphi'(f(x))||\partial^0 f(x)||\geq 1
$$
for all $x\in [0 <f< \bar r]$. Set $\alpha_0=\f(r_0)$ and consider the function $\psi=(\varphi\vert_{[0,r_0]})^{-1}:[0,\alpha_0]\to [0,r_0]$, which is increasing and convex. \\

The following assumption will be useful in the sequel:

\begin{itemize}
	\item [({\bf A})] The function $\psi'$ is Lipschitz continuous (on $[0,\alpha_0]$) with constant $\ell>0$ and $\psi'(0)=0$.
\end{itemize}

Intuitively, the function  $\psi$ embodies the worst-case ``profile" of $f$. As explained below, the worst-case behavior of descent methods appears indeed to be measured through $\varphi$. The assumption~({\bf A}) is definitely weak, since for interesting cases $\psi$ is flat  near $0$, while it can be chosen affine for large values (see Proposition~\ref{convex}).

We focus on algorithms that generate subgradient descent sequences, thus complying with ({\bf H1}) and~({\bf H2}).\\

\noindent {\bf A one-dimensional worst-case proximal sequence.} Set
\begin{equation}\label{comp}
\zeta=\frac{\sqrt{1+2\ell \,a \,b^{-2}}-1}{\ell}>0,
\end{equation}
where $a>0$, $b>0$ and $\ell>0$ are given in ({\bf H1}), ({\bf H2}) and ({\bf A}), respectively. Starting from $\alpha_0$, we define the {\em one-dimensional worst-case proximal sequence} inductively by
\begin{equation}\label{worst}
\alpha_{k+1}=\argmin\left\{\psi(u)+\frac{1}{2\zeta} (u-\alpha_k)^2:u\geq 0\right\}
\end{equation}
for $k\ge 0$. Using standard arguments, one sees that $\alpha_k$ is well defined and positive for each $k\ge 0$. Moreover, the sequence can be interpreted through the recursion
\begin{equation} \label{E:alpha_prox}
\alpha_{k+1}=(I+\zeta\psi')^{-1}(\alpha_k)=\prox_{\zeta\psi}(\alpha_k),
\end{equation} 
for $k\ge 0$ and where $I$ is the identity on $\R$. 
Finally, it is easy to prove that $\alpha_k$ is decreasing and converges to zero. By continuity, $\lim\limits_{k\to\infty}\psi(\alpha_k)=0$.\\




The following is one of our main results. It asserts that $(\alpha_k)_{k\in\N}$ is a {\em majorizing sequence} ``\`a la Kantorovich":

\begin{theorem}[Complexity of descent sequences for convex KL functions]\label{t:complexity} $\:$\\
Let $f:H\to\Rcupinf$ be a proper lower-semicontinuous convex function with $\argmin f\neq\emptyset$ and $\min f=0$.  Assume further that $f$ has the KL property on $[0<f<\bar r]$.  Let $(x_k)_{k\in \N}$ be a subgradient descent sequence with $f(x_0)=r_0\in (0,\bar r)$ and suppose that  assumption {\rm({\bf A})} holds {\rm (}on the interval $[0,\alpha_0]$ with $\psi(\alpha_0)=r_0${\rm )}.

Define the one-dimensional worst-case proximal sequence $(\alpha_k)_{k\in \N}$ as above\footnote{See \eqref{comp} and \eqref{worst}.}. Then, $(x_k)_{k\in \mathbb{N}}$ converges strongly to some minimizer $x^*$ and, moreover,
\begin{align} 
& f(x_k)  \leq  \psi(\alpha_k),  \quad\forall k\geq 0,\label{E:T_worst_prox1}\\
 &\|x_k-x^*\|  \leq \frac{b}{a} \alpha_k+\sqrt{\frac{\psi(\alpha_{k-1})}{a}}, \quad\forall k\geq 1.\label{E:T_worst_prox2}
\end{align}
\end{theorem}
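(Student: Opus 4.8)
The plan is to reduce everything to a scalar comparison between the transformed value sequence $\beta_k := \varphi(f(x_k))$ and the worst-case proximal sequence $(\alpha_k)$, proving by induction that $\beta_k \leq \alpha_k$ for all $k$. Since $\psi=\varphi^{-1}$ is increasing, this is precisely \eqref{E:T_worst_prox1}, namely $f(x_k)=\psi(\beta_k)\leq\psi(\alpha_k)$. Strong convergence of $(x_k)$ to some $x^*\in\argmin f$ is already granted by Theorem~\ref{T:f_to_x}, which moreover supplies $\|x_k-x^*\|\leq \frac{b}{a}\varphi(f(x_k)) + \sqrt{f(x_{k-1})/a}$; once $\beta_k\leq\alpha_k$ and $f(x_{k-1})\leq\psi(\alpha_{k-1})$ are established, substituting $\varphi(f(x_k))=\beta_k\leq\alpha_k$ yields \eqref{E:T_worst_prox2} at once. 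So the entire content is the inequality $\beta_k\leq\alpha_k$.

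First I would derive the fundamental one-step decrease. Combining ({\bf H1}) in the form $f(x_k)-f(x_{k+1})\geq a\|x_{k+1}-x_k\|^2$, condition ({\bf H2}) giving $\omega_{k+1}\in\partial f(x_{k+1})$ with $\|\omega_{k+1}\|\leq b\|x_{k+1}-x_k\|$, and the KL inequality $\varphi'(f(x_{k+1}))\|\partial^0 f(x_{k+1})\|\geq 1$ together with $\|\partial^0 f(x_{k+1})\|\leq\|\omega_{k+1}\|$, one obtains $\|x_{k+1}-x_k\|\geq 1/(b\,\varphi'(f(x_{k+1})))$ and hence $f(x_k)-f(x_{k+1})\geq \frac{a}{b^2}\,\varphi'(f(x_{k+1}))^{-2}$. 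Rewriting in the $\psi$-variables through $f(x_j)=\psi(\beta_j)$ and the inverse-function identity $\varphi'(\psi(\beta))=1/\psi'(\beta)$, this becomes $\psi(\beta_k)-\psi(\beta_{k+1})\geq \frac{a}{b^2}\,\psi'(\beta_{k+1})^2$. Throughout, $f(x_k)$ is nonincreasing and bounded by $r_0<\bar r$, so every iterate remains in the KL region; the degenerate cases $f(x_{k+1})=0$ or $x_{k+1}=x_k$ are treated exactly as in Theorem~\ref{T:f_to_x} and render the claimed bound trivial.

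The key step, where Assumption~({\bf A}) enters, is to convert this into the implicit-Euler recursion satisfied by $(\alpha_k)$. Since $\psi$ is convex with $\psi'$ being $\ell$-Lipschitz on $[0,\alpha_0]$, integrating $\psi'(t)\leq\psi'(\beta_{k+1})+\ell(t-\beta_{k+1})$ over $[\beta_{k+1},\beta_k]$ gives $\psi(\beta_k)-\psi(\beta_{k+1})\leq \psi'(\beta_{k+1})\,\delta + \frac{\ell}{2}\delta^2$ with $\delta:=\beta_k-\beta_{k+1}\geq 0$. Writing $p:=\psi'(\beta_{k+1})\geq 0$, the two inequalities combine into the quadratic inequality $\frac{\ell}{2}\delta^2 + p\delta - \frac{a}{b^2}p^2\geq 0$, whose only nonnegative root is $\zeta p$ with $\zeta=(\sqrt{1+2\ell a b^{-2}}-1)/\ell$; therefore $\delta\geq\zeta p$, that is, $\beta_k \geq \beta_{k+1}+\zeta\psi'(\beta_{k+1}) = (I+\zeta\psi')(\beta_{k+1})$. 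I expect this quadratic-resolution step, which must reproduce exactly the constant $\zeta$ of \eqref{comp}, to be the main technical obstacle.

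Finally I would close the induction by monotonicity. The map $I+\zeta\psi'$ is strictly increasing (as $\psi'$ is nondecreasing), so its inverse $(I+\zeta\psi')^{-1}=\prox_{\zeta\psi}$ is increasing. Applying it to $\beta_k\geq(I+\zeta\psi')(\beta_{k+1})$ gives $\beta_{k+1}\leq(I+\zeta\psi')^{-1}(\beta_k)$, and applying it to the induction hypothesis $\beta_k\leq\alpha_k$ gives $(I+\zeta\psi')^{-1}(\beta_k)\leq(I+\zeta\psi')^{-1}(\alpha_k)=\alpha_{k+1}$. Chaining these yields $\beta_{k+1}\leq\alpha_{k+1}$, while the base case $\beta_0=\varphi(r_0)=\alpha_0$ holds by definition. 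This establishes $\beta_k\leq\alpha_k$ for all $k$, hence \eqref{E:T_worst_prox1}, and then \eqref{E:T_worst_prox2} follows from Theorem~\ref{T:f_to_x} as explained above.
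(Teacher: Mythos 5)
Your proof is correct and follows essentially the same route as the paper's: both pass to the transformed values $\beta_k=\varphi(f(x_k))$, combine ({\bf H1}), ({\bf H2}) and the KL inequality with the descent lemma for $\psi$ to arrive at the quadratic inequality whose nonnegative root produces exactly $\zeta$, and then conclude by an inductive comparison of $(\beta_k)$ with $(\alpha_k)$, quoting Theorem~\ref{T:f_to_x} for strong convergence and the sequence estimate. The only difference is organizational: where the paper writes the recursion as $\beta_k=(I+s_k\psi')^{-1}(\beta_{k-1})$ with variable steps $s_k\geq\zeta$ and then invokes two separate comparison claims (Claims 1 and 2), you fold both into a single induction using the monotonicity of $(I+\zeta\psi')^{-1}$ --- a mild streamlining of the same argument.
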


\begin{proof} 
For $k\ge 1$, set $r_k:=f(x_k)$. If $r_{k}=0$ the result is trivial. Assume $r_k>0$, 
then one has also $r_j>0$ for $j=1,\dots,k$. Set $\beta_k=\psi^{-1}(r_k)>0$ and $s_k=
\frac{\beta_{k-1}-\beta_{k}}{\psi'(\beta_{k})}>0$ so that $\beta_k$ satisfies
\begin{equation} \label{E:beta_prox}
\beta_{k}=(1+s_k\psi')^{-1}(\beta_{k-1}).
\end{equation}
We shall prove that $s_k\ge\zeta$.
Combining the KL inequality and $\htwo$,  we obtain that 
$$b^2\varphi'(r_{k})^2 \|x_{k}-x_{k-1}\|^2\geq \varphi'(r_{k})^2 \|\omega_{k}\|^2\geq 1,$$
where $\omega_k$ is as in $\htwo$.
Using $\hone$ and the formula for the derivative of the inverse function, this gives 
$$
\frac{a}{b^2}\leq \varphi'(r_{k})^2(r_{k-1}-r_{k})=\frac{(\psi(\beta_{k-1})-\psi(\beta_{k}))}{\psi'(\beta_{k})^2}.
$$
We now use the descent Lemma on $\psi$ (see, for instance, \cite[Lemma 1.30]{Pey_book}), to obtain
$$\frac{a}{b^2}\le \frac{(\beta_{k-1}-\beta_{k})}{\psi'(\beta_{k})} + \frac{\ell(\beta_{k-1}-\beta_{k})^2}{2\psi'(\beta_{k})^2} =  s_k+\frac{\ell}{2} s_k^2.$$
We conclude that
\begin{equation} \label{E:Lambda_sk}
s_k\geq \frac{\sqrt{1+2\ell \,a \,b^{-2}}-1}{\ell}=\zeta.
\end{equation} 
The above holds for every $k\ge 1$ such that $r_k>0$. \\

To conclude we need two simple results on the prox operator in one dimension.\\
\medskip
\noindent{\sc Claim 1.} {\em Take $\lambda^0>\lambda^1$ and $\gamma>0$. Then 
$$(I+\lambda^0\psi')^{-1}(\gamma)<(I+\lambda^1\psi')^{-1}(\gamma).$$}

\noindent {\em Proof of Claim 1}. It  is elementary, set $\delta=(I+\lambda^1\psi')^{-1}(\gamma)\in (0,\gamma)$, one indeed has 
$(I+\lambda^0\psi')(\delta)=(I+\lambda^1\psi')(\delta)+(\lambda^0-\lambda^1)\psi'(\delta)>\gamma,$
and the result follows by the monotonicity of $I+\lambda_0\psi'.$ 

\medskip

\noindent{\sc Claim 2.} {\em  Let $(\lambda_k^0)_{k\in\N},(\lambda_k^1)_{k\in \N}$ two positive sequences such that $\lambda_k^0\geq \lambda_k^1$ for all $k\geq 0$. Define the two proximal sequences
$$\beta_{k+1}^0=(I+\lambda_k^0\psi')^{-1}(\beta_k^0), \quad \beta_{k+1}^1=(I+\lambda_k^1\psi')^{-1}(\beta_k^1),$$
with $\beta^0_0=\beta_0^1\in (0,r_0]$. Then 
$\beta_{k}^0\leq \beta_k^1$ for all $k\geq 0$.}\\

\noindent{\em Proof of Claim 2.} We proceed by induction, the first step being trivial, we assume the result holds true for $k\geq 0$. We write
$$\beta^0_{k+1} = (I+\lambda_k^0\psi')^{-1}(\beta_k^0)  \leq  (I+\lambda_k^0\psi')^{-1}(\beta_k^1)  \leq  (I+\lambda_k^1\psi')^{-1}(\beta_k^1)  =  \beta_{k+1}^1,$$
where the first inequality is due to the induction assumption (and the monotonicity of $\psi'$), while the second one follows from Claim~1. \\

We now conclude by observing that $\alpha_k,\beta_k$ are proximal sequences, 
$$\alpha_{k+1}=(I+c\psi')^{-1}(\alpha_k),\quad \beta_{k+1}=(I+s_k \psi')^{-1}(\beta_k).$$
Recalling that $s_k\geq \zeta$, one can apply Claim~2 to obtain that $\alpha_k\geq \beta_k$. And thus $\psi(\alpha_k)\geq \psi(\beta_k)=r_k$. \\
The last point follows from Theorem~\ref{T:f_to_x}.
\end{proof}

\begin{remark}[Two complexity regimes]{\rm  In many cases the function $\psi$ is nonlinear near zero and is affine beyond a given threshold $t_0>0$ (see subsection \ref{SS:EB} or Proposition~\ref{convex}). This geometry  reflects  on the convergence rate of the estimators as follows:
\begin{enumerate}
\item A  fast convergence regime is observed when $\alpha_k>t_0$.  The objective is cut down by a constant value at each step. 
 \item When the sequence $\alpha_k$ enters $[0,t_0]$, a slower and restrictive complexity regime  appears. 
 \end{enumerate}
 }
\end{remark}

\begin{remark}[Complexity with a continuum of minimizers]{\rm  We draw the attention of the reader that our complexity result {\em on the sequence} (not only on the values) holds even in the case when there is a continuum of minimizers. }
\end{remark}

\smallskip

It is obvious from the proof that the following result holds.
\begin{corollary}[Stable sets and complexity]\label{c:complexity}{\rm Let $X$ be a subset of $H$. If the set $[0<f<\bar r]$ on which $f$ has the KL property  is replaced by a more general set of the form: $\bar X=X\cap [0<f<\bar r]$ with the property that 
 $x_k\in \bar X$ for all $k\geq 0$, then the same result holds.  
}
\end{corollary}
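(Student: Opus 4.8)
The plan is to observe that neither the proof of Theorem~\ref{t:complexity} nor that of Theorem~\ref{T:f_to_x}, on which it rests, ever invokes the KL inequality at a point other than an iterate $x_k$ of the subgradient descent sequence. Since by hypothesis every iterate lies in $\bar X = X\cap[0<f<\bar r]$, and $f$ satisfies the KL property on $\bar X$ with desingularizing function $\varphi$, this localized assumption already supplies everything both arguments actually consume, so they reproduce verbatim.

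Concretely, I would first revisit the derivation of \eqref{phong} in the proof of Theorem~\ref{T:f_to_x}. The only place the KL property enters is in passing from the concavity estimate $\varphi(f(x_k))-\varphi(f(x_{k+1}))\geq\varphi'(f(x_k))\big(f(x_k)-f(x_{k+1})\big)$ to the next line, where one substitutes the KL inequality $\varphi'(f(x_k))\,\|\omega_k\|\geq 1$ together with $\htwo$. This is a single application of KL at the point $x_k\in\bar X$. Analogously, in the proof of Theorem~\ref{t:complexity}, the bound $s_k\geq\zeta$ in \eqref{E:Lambda_sk} is extracted from $b^2\varphi'(r_k)^2\|x_k-x_{k-1}\|^2\geq 1$, which is again KL applied solely at $x_k\in\bar X$. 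No other use of KL occurs anywhere in either proof.

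I would then verify that all remaining steps are insensitive to where KL holds. The telescoping sum, the convergence of $\sum_i\|x_i-x_{i+1}\|$, and the ensuing strong convergence of $(x_k)$ via the Cauchy criterion rely only on $\hone$, $\htwo$, and the already-established estimate \eqref{phong}. The identification $0\in\partial f(x^*)$ follows from $\omega_k\to 0$ and the closedness of the graph of $\partial f$ for the strong--weak topology, with no appeal to KL at $x^*$; this is consistent with the fact that $f(x^*)=\min f=0$, so $x^*$ lies outside $[0<f<\bar r]$ in any case. Finally, the monotone comparison of the one-dimensional proximal sequences $(\alpha_k)$ and $(\beta_k)$ through Claim~1 and Claim~2 depends purely on properties of $\psi=(\varphi\vert_{[0,r_0]})^{-1}$ and assumption~$({\bf A})$, both unaffected by the localization.

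The one point requiring care --- and the reason the statement is not entirely tautological --- is the bookkeeping needed to confirm that no intermediate quantity is silently evaluated at a point of $[0<f<\bar r]$ lying outside the orbit $\{x_k\}_{k\geq 0}$. Once that check is complete, the conclusions \eqref{E:T_worst_prox1} and \eqref{E:T_worst_prox2}, together with strong convergence of $(x_k)$ to a minimizer, follow exactly as in the unlocalized case.
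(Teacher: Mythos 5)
Your proposal is correct and is exactly the argument the paper has in mind: the paper dispenses with a proof entirely, remarking only that the corollary ``is obvious from the proof'' of Theorem~\ref{t:complexity}, and your careful tracing of the two places where the KL inequality is invoked (the derivation of \eqref{phong} in Theorem~\ref{T:f_to_x} and the bound \eqref{E:Lambda_sk} in Theorem~\ref{t:complexity}, both at the iterate $x_k$) is precisely the bookkeeping that makes this obvious. Nothing is missing; your observation that $x^*$ lies outside $[0<f<\bar r]$ anyway, so no KL appeal is needed at the limit, is a correct and welcome extra check.
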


The above corollary has the advantage to relax the constraints on the desingularizing function: the smaller the set is, the lower (and thus the better) $\varphi$ can be\footnote{Desingularizing functions for a given problem (but with different domains) are generally definable in the same o-minimal structure  thus their germs are always 
comparable. This is why the expression ``the lower" is not ambiguous in our context.}. 
There are thus some possibilities to obtain  functions $\psi$ with an improved 
conditioning/geometry, which could eventually lead to tighter complexity bounds. On 
the other hand, the stability condition $x_k\in \bar X, \;\forall k\in \N$ is generally 
difficult to obtain. \\

\medskip
We conclude by providing a study of the important case $\psi(s)=\frac{\ell}{2}s^2$.
In that case assumption {\rm({\bf A})} holds, and we obtain the following particular instance of Theorem \ref{t:complexity}:

\begin{corollary}\label{T:2_complexity} 
The assumptions and the notation are those of Theorem  \ref{t:complexity}, but we assume further that $f$ has the KL property with $\psi(s)=\frac{\ell}{2}s^2$ on $[0<f<\bar r]$. We set
\begin{equation}\label{pas}
\sigma=\ell b^{-2}.
\end{equation} In that case the complexity estimates given in Theorem~\ref{t:complexity} take the form 
\begin{eqnarray}
f(x_k) & \le & \frac{f(x_0)}{(1+2 a\sigma)^k}, \quad\forall k\geq 0,\label{E:lemma2_1}\\ 
\|x_k-x^*\|  & \leq & \left[1+\frac{1}{a\sigma\sqrt{1+\frac{1}{2a\sigma}}}\right]
\frac{\sqrt{\frac{1}{a}f(x_0)}}{\:\:\left(1+2 a\sigma\right)^{\frac{k-1}{2}}}, \quad\forall k\geq 1.\label{E:lemma2_2}
\end{eqnarray}
\end{corollary}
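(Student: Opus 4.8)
The plan is to exploit the fact that for the quadratic profile $\psi(s)=\frac{\ell}{2}s^2$ the one-dimensional worst-case proximal sequence can be computed in closed form, reducing everything to Theorem~\ref{t:complexity}. First I would check that assumption~{\rm({\bf A})} holds: here $\psi'(s)=\ell s$ is Lipschitz with constant $\ell$ and $\psi'(0)=0$, so Theorem~\ref{t:complexity} applies and gives $f(x_k)\le\psi(\alpha_k)$ together with the sequence estimate \eqref{E:T_worst_prox2}. The normalization $\psi(\alpha_0)=r_0$ forces $\frac{\ell}{2}\alpha_0^2=f(x_0)$, i.e.\ $\alpha_0=\sqrt{2f(x_0)/\ell}$, a relation I will need at the end.

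Next I would solve the proximal recursion explicitly. Since $\psi'$ is linear, the resolvent is linear: $(I+\zeta\psi')^{-1}(\alpha)=\alpha/(1+\zeta\ell)$, so \eqref{E:alpha_prox} becomes a geometric sequence, $\alpha_k=\alpha_0\,(1+\zeta\ell)^{-k}$. The key simplification is to evaluate $\zeta\ell$ using $\sigma=\ell b^{-2}$: from \eqref{comp} one has $\zeta=\big(\sqrt{1+2a\sigma}-1\big)/\ell$, hence $1+\zeta\ell=\sqrt{1+2a\sigma}$ and therefore $\alpha_k=\alpha_0\,(1+2a\sigma)^{-k/2}$. Plugging this into $f(x_k)\le\psi(\alpha_k)=\frac{\ell}{2}\alpha_k^2=\psi(\alpha_0)\,(1+2a\sigma)^{-k}=f(x_0)\,(1+2a\sigma)^{-k}$ yields \eqref{E:lemma2_1} immediately.

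For the second estimate I would substitute the explicit values into \eqref{E:T_worst_prox2}. Using $\psi(\alpha_{k-1})=\frac{\ell}{2}\alpha_{k-1}^2=f(x_0)\,(1+2a\sigma)^{-(k-1)}$, the square-root term is exactly $\sqrt{f(x_0)/a}\,(1+2a\sigma)^{-(k-1)/2}$, so I factor this quantity out. The remaining task is to show that the coefficient multiplying it in front of $\frac{b}{a}\alpha_k$ equals $\frac{1}{a\sigma\sqrt{1+1/(2a\sigma)}}$. Writing $\frac{b}{a}\alpha_k=\frac{b}{a}\sqrt{2f(x_0)/\ell}\,(1+2a\sigma)^{-k/2}$ and dividing by the factored term, the ratio collapses after cancelling $\sqrt{f(x_0)}$ and one power of $(1+2a\sigma)^{1/2}$; replacing $\ell=\sigma b^2$ then removes the dependence on $b$ and leaves $\sqrt{2/\big(a\sigma(1+2a\sigma)\big)}$, which one checks equals $\frac{1}{a\sigma\sqrt{1+1/(2a\sigma)}}$. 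This produces \eqref{E:lemma2_2}, and strong convergence of $(x_k)$ is inherited directly from Theorem~\ref{t:complexity}.

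The only delicate point is this last algebraic matching: the constant in \eqref{E:lemma2_2} looks intricate, so the main care is in tracking the powers of $(1+2a\sigma)$ and substituting $\ell=\sigma b^2$ at the right moment so that the $b$-dependence cancels and the expression reduces to the stated form. Everything else is a direct consequence of the linearity of $\psi'$ and of Theorem~\ref{t:complexity}.
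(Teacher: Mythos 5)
Your proposal is correct and follows essentially the same route as the paper's proof: solve the one-dimensional proximal recursion in closed form (it is geometric since $\psi'$ is linear), use $1+\zeta\ell=\sqrt{1+2a\sigma}$ to get \eqref{E:lemma2_1} from \eqref{E:T_worst_prox1}, and then substitute the explicit $\alpha_k$, $\psi(\alpha_{k-1})$ into \eqref{E:T_worst_prox2} and simplify the constant via $\ell=\sigma b^2$. The only difference is bookkeeping: the paper factors out $\sqrt{2f(x_0)}\,(1+2a\sigma)^{-k/2}$ and rewrites the bracket $\bigl[\frac{b}{a\sqrt{\ell}}+\sqrt{\frac{1}{2a}+\frac{\ell}{b^2}}\bigr]$, whereas you normalize by the square-root term $\sqrt{f(x_0)/a}\,(1+2a\sigma)^{-(k-1)/2}$ directly — the algebra is equivalent.
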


\begin{proof}
First, recall that the one-dimensional worst-case proximal sequence $(\alpha_k)_{k\in\N}$ is given by $\alpha_0=\f(r_0)$, and
$$\alpha_{k+1}=\argmin\left\lbrace \frac{\ell}{2} s^2+\frac{1}{2\zeta}\left(s-\alpha_k \right)^2:s\geq 0\right\rbrace$$
for all $k\ge 0$, where
$$\zeta=\frac{\sqrt{1+2\ell ab^{-2}}-1}{\ell}.$$ 
Whence,
$\alpha_{k+1}=\frac{\alpha_{k}}{(1+\ell\zeta)},$
and so,
\begin{equation} \label{E:alpha_k} 
\alpha_k=\frac{\alpha_0}{(1+\ell\zeta)^k}, \;\; \forall k\geq0.
\end{equation}
 From \eqref{E:T_worst_prox1}, we immediately deduce
$$f(x_k)\le \frac{f(x_0)}{(1+\ell\zeta)^{2k}}.$$
Finally, since
$$1+\ell\zeta=\sqrt{1+2\ell ab^{-2}}=\sqrt{1+2a\sigma},$$
we obtain \eqref{E:lemma2_1}. For \eqref{E:lemma2_2}, first observe that
\begin{equation} \label{E:lemme_aux_1}
\frac{b}{a} \alpha_k=\frac{b}{a}\frac{\alpha_0}{(1+\ell\zeta)^k}=\frac{b}{a\sqrt{\ell}}\frac{\sqrt{2f(x_0)}}{(1+\ell\zeta)^k}=\frac{b}{a\sqrt{\ell}}\frac{\sqrt{2f(x_0)}}{(1+2\ell ab^{-2})^{k/2} },
\end{equation} 
while
\begin{equation} \label{E:lemme_aux_2}
\sqrt{\frac{\psi(\alpha_{k-1})}{a}}=\sqrt{\frac{\ell\alpha_{k-1}^2}{2a}}=\sqrt{\frac{\ell\alpha_0^2}{2a(1+\ell\zeta)^{2k-2}}}=\sqrt{\frac{1+2\ell ab^{-2}}{2a}}\frac{\sqrt{2f(x_0)}}{(1+2\ell ab^{-2})^{k/2}}.
\end{equation} 
In view of \eqref{E:T_worst_prox2},  by adding \eqref{E:lemme_aux_1} and \eqref{E:lemme_aux_2} we obtain:
\begin{equation}\label{op}
\|x_k-x^*\|   \leq  \left[\frac{b}{a\sqrt{\ell}}+\sqrt{\frac{1}{2a}+\frac{\ell}{b^2}}\right]\frac{\sqrt{2f(x_0)}}{(1+2\ell ab^{-2})^{k/2}}, \quad\forall k\geq 1.
\end{equation}
To conclude, observe that 
\begin{eqnarray*}
 \left[\frac{b}{a\sqrt{\ell}}+\sqrt{\frac{1}{2a}+\frac{\ell}{b^2}}\right] & = &\sqrt{\frac{1}{2a}+\frac{\ell}{b^2}}\left[1+\frac{1}{\sqrt{\frac{a\sigma}{2}+a^2\sigma^2}}\right]\\
 & = &\sqrt{\frac{1+2a\sigma}{2a}}\left[1+\frac{1}{a\sigma\sqrt{1+\frac{1}{2a\sigma}}}\right], 
  \end{eqnarray*}
and combine this last equality with \eqref{op} to obtain the result.

\end{proof}

\begin{remark}[Constants]{\rm The constant $\sigma=\ell b^{-2}$ plays the role of a step size as it can be seen in the forthcoming examples. For smooth problems and for the classical gradient method, one has for instance $\sigma=\text{constant}\,\cdot \frac{1}{L}$ (see Section~\ref{s5} below). 
}
\end{remark}

\section{Applications:  feasibility problems, uniformly convex problems and compressed sensing}\label{s5}

In this section we apply our general methodology to derive complexity results for some keynote algorithms that are used to solve problems arising in compressed sensing and convex feasibility. We shall make a constant use of Corollary~\ref{T:2_complexity}, so let us keep in mind the notation introduced in Section \ref{s4}, especially the constants $a$, $b$ and $\ell$.

\subsection{Convex feasibility problems with regular intersection}

Let $\displaystyle \big\{C_i\big\}_{i\in \{1,\ldots,m\}}$ be a family of closed convex subsets of $H$, for which there exist $R>0$ and $\bar{x}\in H$ with 
$$B(\bar{x},R)\subset C:=\mathop\bigcap\limits_{i=1}^m C_i.$$ 

\noindent
{\bf  Barycentric Projection Algorithm.} Starting from $x_0\in H$, this method generates a sequence $(x_k)_{k\in\N}$ by the following recursion 
$$x_{k+1}=\mathop\sum\limits_{i=1}^m \alpha_i P_{C_i}(x_{k}).$$
where $\alpha_i>0$ and $\sum_{i=1}^m \alpha_i=1$.\\

Using the function $f=\frac{1}{2}\sum\limits_{i=1}^{m} \alpha_i \dist^2 (\cdot,C_i)$, studied in Subsection~\ref{s:feas}, it is easy to check that  $$\nabla f(x)=\sum\limits_{i=1}^{m} \alpha_i (x-P_{C_i}x)=x- \sum\limits_{i=1}^{m} \alpha_i P_{C_i}(x)$$
for all $x$ in $H$. Thus, the sequence $(x_k)_{k\in \mathbb{N}}$ can be described by the recursion
$$ x_{k+1}=x_k-\nabla f(x_k), \:k\geq 0.$$
Moreover, $\nabla f$ is Lipschitz continuous with constant $L=1$. It follows that $(x_k)_{k\in \NN}$ satisfies the conditions \hone\ and \htwo\ with $a=\frac{1}{2}, b=2$. It is classical to see that for any $\hat x\in C$, the sequence $\|x_k-\hat x\|$ is decreasing (see, for instance, \cite{Pey_book}). This implies that $x_k\in B(\bar{x}, \|x_0-\bar{x}\|)$ for all $k\geq0$. As a consequence, $f$ has a global desingularizing function $\f$ on $B(\bar{x}, \|x_0-\bar{x}\|)$, whose inverse is given by 
$$\psi(s)=\frac{M}{2}s^2, \:s\geq0,$$ 
where $M$ is given by \eqref{E:M_barycentric}. 
Using Theorem \ref{T:2_complexity} with $a=\frac{1}{2}$, $b=2$ and $\ell=M$, we obtain:

\begin{theorem}[Complexity of the barycentric projection method for regular intersections]
The ba\-ry\-cen\-tric projection sequence $(x_k)_{k\in \N}$ converges strongly to a point $x^*\in C$ and
\begin{align*}
 & f(x_k)\:  \leq   \:\: \frac{f(x_0)}{\left(1+\frac{M}{4}\right)^k}, \quad \forall k\geq 0,\\
&  \|x_k-x^*\|  \leq \left[1+\frac{8}{M\sqrt{1+\frac{4}{M}}}\right]\, \frac{\sqrt{2f(x_0)}}{\,\left(1+\frac{M}{4}\right)^{\frac{k-1}{2}}} \:   ,\quad \forall k\geq 1,\notag
\end{align*}
where $M$ is given by \eqref{E:M_barycentric}.
\end{theorem}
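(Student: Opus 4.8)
The plan is to read the statement as a direct specialization of Corollary~\ref{T:2_complexity}: all the analytic content has already been assembled, so the proof reduces to checking the hypotheses of that corollary and substituting the correct constants $a$, $b$ and $\ell$.

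First I would certify that the barycentric projection iteration is a subgradient descent sequence in the sense of $\hone$ and $\htwo$. Since $\nabla f(x)=x-\sum_{i=1}^m\alpha_iP_{C_i}(x)$, the recursion $x_{k+1}=\sum_{i=1}^m\alpha_iP_{C_i}(x_k)$ is exactly the explicit gradient step $x_{k+1}=x_k-\nabla f(x_k)$, that is, the forward-backward method with $g=0$, $h=f$, Lipschitz constant $L=1$ and constant step $\lambda_k=1<2/L$. Proposition~\ref{P:FB_ab} then delivers $\hone$ and $\htwo$ with $a=\frac1{\lambda^+}-\frac L2=\frac12$ and $b=\frac1{\lambda^-}+L=2$. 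Next, from the error bound obtained in Subsection~\ref{s:feas} under the regular-intersection hypothesis \eqref{ball}, Theorem~\ref{Theolocal} yields a desingularizing function $\varphi$ for $f$ whose inverse is $\psi(s)=\frac M2 s^2$, with $M$ as in \eqref{E:M_barycentric}. Since $\psi'(s)=Ms$ is Lipschitz with constant $\ell=M$ and $\psi'(0)=0$, assumption~({\bf A}) holds with $\ell=M$.

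The one genuinely non-arithmetic step, and the main obstacle, is that this KL and error-bound information lives only on the ball $B(\bar x,\|x_0-\bar x\|)$, not on a full sublevel set $[0<f<\bar r]$. To bridge this I would establish stability of the iterates inside that ball via Fej\'er monotonicity: for the gradient-projection dynamics $\|x_k-\hat x\|$ is nonincreasing for every $\hat x\in C=\argmin f$, so choosing $\hat x=\bar x$ gives $x_k\in B(\bar x,\|x_0-\bar x\|)$ for all $k$. Hence the set $\bar X=B(\bar x,\|x_0-\bar x\|)\cap[0<f<\bar r]$ is stable, and Corollary~\ref{c:complexity} permits the complexity analysis of Theorem~\ref{t:complexity}, and thus of Corollary~\ref{T:2_complexity}, to be carried out on $\bar X$ as though the KL inequality held globally. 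Strong convergence of $(x_k)$ to some $x^*\in C$ then comes directly from Theorem~\ref{t:complexity}.

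It remains to substitute. With $a=\frac12$, $b=2$ and $\ell=M$ the step constant of Corollary~\ref{T:2_complexity} is $\sigma=\ell b^{-2}=M/4$, so that $1+2a\sigma=1+M/4$, and \eqref{E:lemma2_1} becomes $f(x_k)\le f(x_0)\,(1+M/4)^{-k}$. For the bound on the iterates, using $\frac1a=2$, $a\sigma=M/8$ and $\frac1{2a\sigma}=4/M$ in \eqref{E:lemma2_2} produces the prefactor $\bigl[1+\tfrac{8}{M\sqrt{1+4/M}}\bigr]$ together with numerator $\sqrt{2f(x_0)}$ and denominator $(1+M/4)^{(k-1)/2}$, which is precisely the announced estimate. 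I expect no difficulty here beyond bookkeeping, the entire subtlety having been absorbed into the stability argument.
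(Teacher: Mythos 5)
Your proposal is correct and follows essentially the same route as the paper: interpret the iteration as an explicit gradient step so that Proposition~\ref{P:FB_ab} gives $a=\tfrac12$, $b=2$, take the desingularizing function from Subsection~\ref{s:feas} so that $\psi(s)=\tfrac{M}{2}s^2$ (hence $\ell=M$), confine the iterates to $B(\bar x,\|x_0-\bar x\|)$ by Fej\'er monotonicity, and substitute into Corollary~\ref{T:2_complexity}. The only cosmetic difference is that you invoke Corollary~\ref{c:complexity} explicitly to justify working on the ball, a step the paper handles implicitly by calling the desingularizing function ``global'' on that ball.
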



\smallskip

\noindent
{\bf Alternating projection algorithm.} We consider here the feasibility problem in the case $m=2$. The von Neuman's {\em alternating projection method} is given by the following recursion 
$$x_0\in H,\quad\hbox{and}\quad x_{k+1}=P_{C_1}P_{C_2}(x_{k})\quad\forall k\ge 0.$$

\smallskip

Let $g=i_{C_1}+\frac{1}{2}\dist^2(\cdot,C_2)$ and let $M'$ be defined as in \eqref{E:M'_alternating} (Subsection~\ref{s:feas}). The function $h=\frac{1}{2} \dist^2(\cdot,C_2)$ is differentiable and $\nabla h=I-P_{C_2}$ is  Lipschitz  continuous with constant 1. We can interpret the sequence $(x_k)_{k\in\N}$ as the forward-backward splitting method\footnote{A very interesting result from Baillon-Combettes-Cominetti \cite{BCC} establishes that for more than two sets there are no potential functions corresponding to the alternating projection method.}
$$x_{k+1}=\Prox_{i_{C_1}}(x_{k}-\nabla h(x_{k}))=P_{C_1}(x_{k}-\nabla h(x_{k})),$$
and observe that the sequence satisfies the conditions $\hone$ and $\htwo$ with $a=\frac{1}{2}$ and $b=2$. As before, the fact that $x_k\in B(\bar{x}, \|x_0-\bar{x}\|)$ for all $k\geq0$, is standard (see \cite{BB}). As a consequence, the function $g$ has a global desingularizing function $\f$ on $B(\bar{x}, \|\bar{x}-x_0\|)$ whose inverse $\psi$ is $\psi(s)=\frac{M'}{2}s^2$, where $M'$ is given by \eqref{E:M'_alternating}. 
Using Corollary~\ref{T:2_complexity} with $a=\frac{1}{2}$, $b=2$ and $\ell=M'$, we obtain:

\begin{theorem}[Complexity of the alternating projection method for regular convex sets] With no loss of generality, we assume that $x_0\in C_1$.
The sequence generated by the alternating projection method converges to a point $x^*\in C$. Moreover, $x_k\in C_1$ for all $k\ge 1$, 
\begin{align*}
& \dist(x_k,C_2) \: \leq \:\: \frac{\dist(x_0,C_2)}{\left(1+\frac{M'}{4}\right)^{\frac{k}{2}}}, \quad \forall k\geq 0,\\
 & \|x_k-x^*\|  \leq \left[1+ \frac{8}{M'\sqrt{1+\frac{M'}{4}}}\right]\, \frac{\dist(x_0,C_2)}{\,\,\left(1+\frac{M'}{4}\right)^{\frac{k-1}{2}}}, \quad \forall k\geq 1,\notag
\end{align*}
where $M'$ is given by \eqref{E:M'_alternating}.
\end{theorem}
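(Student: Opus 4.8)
The plan is to recognize the alternating projection iteration as a forward--backward descent sequence for $g=i_{C_1}+\frac12\dist^2(\cdot,C_2)$ and then to feed the constants already identified in Subsection~\ref{s:feas} into Corollary~\ref{T:2_complexity}. First I would record the forward--backward reading $x_{k+1}=P_{C_1}(x_k-\nabla h(x_k))$ with $h=\frac12\dist^2(\cdot,C_2)$, whose gradient $\nabla h=I-P_{C_2}$ is $1$-Lipschitz; since $x_k-\nabla h(x_k)=P_{C_2}(x_k)$, this indeed reproduces $x_{k+1}=P_{C_1}P_{C_2}(x_k)$. Proposition~\ref{P:FB_ab} applied with unit step then yields $\hone$ and $\htwo$ with $a=\frac12$ and $b=2$. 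Because $x_k=P_{C_1}(\cdots)\in C_1$ for every $k\ge1$, one may assume $x_0\in C_1$ without loss of generality (otherwise relabel from $x_1$), and for such iterates $g(x_k)=\frac12\dist^2(x_k,C_2)$. This identity is the dictionary that will convert the abstract value estimate into the announced distance estimate, and it also shows $\argmin g=C$, so the strong limit $x^*$ furnished by Theorem~\ref{T:f_to_x} lies in $C$.

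The second ingredient is the desingularizing data. From Subsection~\ref{s:feas}, $g$ is KL on $B(\bar x,\|x_0-\bar x\|)\cap[0<g]$ with $\varphi(s)=\sqrt{2s/M'}$, so its inverse is $\psi(s)=\frac{M'}{2}s^2$, for which $\psi'(s)=M's$ is Lipschitz with constant $\ell=M'$ and $\psi'(0)=0$; hence assumption~(\textbf{A}) holds and the hypotheses of Corollary~\ref{T:2_complexity} are satisfied with $\ell=M'$. The one subtle point is that the KL inequality is only available on the ball $B(\bar x,\|x_0-\bar x\|)$, not on a full sublevel set. I would therefore invoke the Fej\'er monotonicity of the alternating projection sequence, which is standard and keeps $x_k\in B(\bar x,\|x_0-\bar x\|)$ for all $k$ (see \cite{BB}), and apply the stable-set version Corollary~\ref{c:complexity} with $X=B(\bar x,\|x_0-\bar x\|)$. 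This localization is the main obstacle; everything else is substitution of constants.

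It then remains to compute. With $a=\frac12$, $b=2$, $\ell=M'$ one gets $\sigma=\ell b^{-2}=M'/4$ and $2a\sigma=M'/4$, so $1+2a\sigma=1+\frac{M'}{4}$. Estimate \eqref{E:lemma2_1} reads $g(x_k)\le g(x_0)/(1+\frac{M'}{4})^k$; inserting $g(x_j)=\frac12\dist^2(x_j,C_2)$ and taking square roots yields $\dist(x_k,C_2)\le \dist(x_0,C_2)/(1+\frac{M'}{4})^{k/2}$, which is the first claim. For the sequence, substituting the same constants into \eqref{E:lemma2_2} produces the announced bracketed prefactor and the geometric decay $(1+\frac{M'}{4})^{-(k-1)/2}$, once one rewrites the numerator $\sqrt{f(x_0)/a}=\sqrt{2g(x_0)}=\dist(x_0,C_2)$ and simplifies the factor $1+\bigl(a\sigma\sqrt{1+1/(2a\sigma)}\bigr)^{-1}$ with $a\sigma=M'/8$. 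I expect no genuine difficulty beyond bookkeeping of these substitutions and the appeal to Corollary~\ref{c:complexity} for the localization to the ball.
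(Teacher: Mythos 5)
Your proposal follows the paper's own proof essentially verbatim: the paper likewise reads the iteration as forward--backward splitting applied to $g=i_{C_1}+\frac{1}{2}\dist^2(\cdot,C_2)$, obtains $a=\frac{1}{2}$, $b=2$ (via the unit-step case of Proposition~\ref{P:FB_ab}), uses the standard fact (citing \cite{BB}) that the iterates remain in $B(\bar x,\|x_0-\bar x\|)$ where $\psi(s)=\frac{M'}{2}s^2$ is available, and substitutes these constants into Corollary~\ref{T:2_complexity}. Your explicit appeal to the stable-set refinement Corollary~\ref{c:complexity} merely makes precise a localization the paper leaves implicit, and your bookkeeping (which yields the factor $\sqrt{1+4/M'}$, consistent with the barycentric theorem, rather than the $\sqrt{1+M'/4}$ printed in this theorem's statement) is sound.
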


\subsection{Uniformly convex problems}
 Let $\sigma$ be a positive coefficient. The function  $f$ is called {\em $p$-uniformly convex, or simply uniformly convex,}  if there exists $p\geq 2$ such that:
$$f(y)\geq f(x)+\langle x^*,y-x\rangle +\sigma\|y-x\|^p,$$
for all $x,y\in H$, $x^*\in \partial f(x)$. It is easy to see that $f$ satisfies the 
KL inequality on $H$ with 
$\varphi(s)=p\;\sigma^{-\frac{1}{p}}\;s^{\frac{1}{p}}$ (see \cite{AttBolRedSou}).  
For such a function we have 
$$\psi(s)=\frac{\sigma}{p^p}s^p, \:s\geq 0.$$
Fix $x_0$ in $\dom f$ and set $r_0=f(x_0)$, $\alpha_0=\psi(r_0)$. The Lipschitz continuity constant of $\psi'$  is given by $\ell=\frac{(p-1)\sigma}{p^{p-1}}\,\alpha_0^{p-2}$. 
Choose a descent method satisfying ({\bf H1}), ({\bf H2}), some examples can be found in \cite{AttBolSva,PierreGuiJuan}. Set $\zeta=\frac{\sqrt{1+2\ell \,a \,b^{-2}}-1}{\ell}$. The complexity of the method is measured by the real sequence
\begin{equation*}
\alpha_{k+1}=\argmin\left\{\frac{\sigma}{p ^p}u^p+\frac{1}{2\zeta} (u-\alpha_k)^2:u\geq 0\right\}, \:k\geq 0.
\end{equation*}
The case $p=2$ can be computed in closed form (as previously), but in general only numerical estimates are available.

Proposition~\ref{EB_Poly} shows that first-order  descent sequences for piecewise polynomial convex functions have a similar complexity structure.   This shows that error bounds or KL inequalities capture more precisely the determinant geometrical factors behind complexity than mere uniform convexity.

\subsection{Compressed sensing and the $\ell^1$-regularized least squares problem }

We refer for instance to \cite{candes} for an account on compressed sensing and an insight into its vast field of applications. We consider the cost function $f:\R^n\to\R$ given by 
$$f(x)=\mu\|x\|_1+\frac{1}{2}\|Ax-d\|_2^2,$$
where $\mu>0$,  $A\in \R^{m\times n}$ and $d\in \R^m$. 

Set $g(x)=\mu\|x\|_1$ and $h(x)=\frac{1}{2}\|Ax-d\|_2^2=\frac{1}{2}\|Ax-d\|^2$, so that $g$ is proper, lower-semicontinuous and convex, whereas $h$ is convex and differentiable, and its gradient is Lipschitz continuous with constant $L=\|A^TA\|$. Starting from any $x_0\in \R^n$, the forward-backward splitting method applied to $f$ is known as the {\em iterative shrinkage thresholding algorithm} \cite{daub}\footnote{Connection between ISTA and the forward-backward splitting method is due to Combettes-Wajs \cite{Waj}}:
$$x_{k+1}=\Prox_{\lambda_k\mu\|\cdot\|_1}\left(x_k-\lambda_{k}( A^TAx_k-A^Td)\right)\quad\hbox{for}\ k\ge 0.\leqno{\rm (ISTA)}$$
Here, $\Prox_{\lambda_k\mu\|\cdot\|_1}$ is an easily computable piecewise linear object known as the {\em soft thresholding} operator (see, for instance, \cite{Waj}). This method has been applied widely in many contexts and is known to have a complexity $O\big(\frac{1}{k}\big)$. We intend to prove here that this bound can be surprisingly ``improved" by our techniques.\\

First, recall that, according to Proposition \ref{P:FB_ab}, sequences generated by this method comply with \hone\ and \htwo, provided the stepsizes satisfy $0<\lambda^-\le\lambda_k\le\lambda^+<2/L$. Recall that the constants $a$ and $b$ can be chosen as 
\begin{equation} \label{E:ab_ISTA}
a=\frac{1}{\lambda^+}-\frac{L}{2}\quad\text{ and }\quad b=\frac{1}{\lambda^-}+L,
\end{equation} 
respectively.\\

Set $\displaystyle R=\max\Big(\frac{f(x_0)}{\mu},1+\frac{\|d\|^2}{2\mu}\Big)$. We clearly have $R>\frac{\|d\|^2}{2\mu}$, and, using the fact that $(x_k)_{k\in\N}$ is a descent sequence, we can easily verify that  $\|x_k\|_1\leq R$ for all $k\in \N$. 

From  Lemma~\ref{sparse}  we know that  the function $f$ has the KL property on $[\min f<f<\min f+r_0]\cap\{x\in\R^n:\|x\|_1\leq R\}$ with\footnote{Recall that~$r_0=f(x_0)$.}  a global desingularizing function $\f$ whose inverse $\psi$ is given by
$$\psi(s)=\frac{\gamma_{R}}{2}s^2, s\geq0$$
 where $\gamma_R$ is known to exist and is bounded from above by the constant given  in  \eqref{constant}. 

\begin{remark}[Constant step size]\label{cstep}{\rm If one makes the simple choice of a {\em constant} step size all throughout the process, namely $\lambda_{k}=d/L$ with $d\in (0,2)$, one obtains $$\zeta=\frac{\sqrt{1+\frac{d(2-d)}{L(1+d)^2}\gamma_R }\,-1}{\gamma_R}\quad\mbox{ 
and }\quad\alpha_{k}=\frac{\alpha_0}{\left(1+\frac{d(2-d)}{L(1+d)^2}\,\gamma_R\right)^{k/2}}, \quad k\geq 0.$$}
\end{remark}

\medskip

Combining the above developments with Corollary~\ref{c:complexity}, we obtain the following surprising result:

\begin{theorem}[Complexity bounds for ISTA]\label{T:ISTA}
The sequence $(x_k)_{k\in \N}$ generated by {\rm ISTA} converges to a minimizer $x^*$ of $f$, and satisfies
\begin{align}\label{estis}
& f(x_k)-\min f \:\:\leq  \: \frac{f(x_0)-\min f}{q^k},\qquad \forall k\geq 0,\\ 
&  \|x_k-x^*\|  \:\: \leq \: C \frac{\sqrt{f(x_0)-\min f}}{q^{\frac{k-1}{2}}}\qquad \forall k\geq 1,
 \end{align}
where
$$q=1+\frac{2a\gamma_R}{b^2}\quad\hbox{and}\quad C=\frac{1}{\sqrt{a}}\left(1+\frac{1}{ab^{-2}\gamma_R\sqrt{1+\frac{1}{2ab^{-2}\gamma_R}}}\right).$$ 
\end{theorem}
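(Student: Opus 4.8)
The plan is to observe that all the ingredients have already been prepared in the preceding paragraphs, so the statement follows by feeding the ISTA-specific constants into the quadratic-profile complexity estimate of Corollary~\ref{T:2_complexity}, with a single localization issue to settle. Since ISTA minimizes $f=g+h$ with $g=\mu\|\cdot\|_1$ and $h=\frac12\|A\cdot-d\|^2$, and since here $\min f$ need not be $0$, I would first replace $f$ by $f-\min f$ throughout. This shift leaves $\partial f$, the decrease condition \hone, the relative-error condition \htwo, and the KL/error-bound data of Lemma~\ref{sparse} unchanged, while converting the conclusions of Corollary~\ref{T:2_complexity} into bounds on $f(x_k)-\min f$ and on $\|x_k-x^*\|$ involving $\sqrt{f(x_0)-\min f}$.

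Next I would verify that ISTA is a subgradient descent sequence. As $\nabla h$ is Lipschitz with constant $L=\|A^TA\|$ and the stepsizes satisfy $0<\lambda^-\le\lambda_k\le\lambda^+<2/L$, Proposition~\ref{P:FB_ab} delivers exactly \hone\ and \htwo\ with the constants $a=\frac{1}{\lambda^+}-\frac L2$ and $b=\frac{1}{\lambda^-}+L$ recorded in \eqref{E:ab_ISTA}. The genuinely delicate point is that the desingularizing data of Lemma~\ref{sparse} hold only on the $\ell^1$ ball $X=\{x\in\R^n:\|x\|_1\le R\}$, not on a full sublevel set, so Theorem~\ref{t:complexity} and Corollary~\ref{T:2_complexity} cannot be applied verbatim. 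I resolve this by showing the whole trajectory stays in $X$: by \hone\ the values $f(x_k)$ are nonincreasing, whence $\mu\|x_k\|_1\le f(x_k)\le f(x_0)\le \mu R$ from the definition $R=\max\big(f(x_0)/\mu,\,1+\|d\|^2/(2\mu)\big)$, so $\|x_k\|_1\le R$ for every $k$. This stability is precisely the hypothesis $x_k\in X\cap[0<f<\bar r]$ needed to invoke Corollary~\ref{c:complexity}.

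With stability secured, I would apply Lemma~\ref{sparse}, which gives the KL property on $X$ with desingularizing function $\varphi(s)=\sqrt{2\gamma_R^{-1}s}$, equivalently with inverse $\psi(s)=\varphi^{-1}(s)=\frac{\gamma_R}{2}s^2$. This $\psi$ is of the quadratic form $\frac{\ell}{2}s^2$ demanded by Corollary~\ref{T:2_complexity}, with $\ell=\gamma_R$; moreover $\psi'(s)=\gamma_R s$ is Lipschitz with constant $\gamma_R$ and $\psi'(0)=0$, so assumption~({\bf A}) holds. Setting $\sigma=\ell b^{-2}=\gamma_R b^{-2}$ and combining Corollary~\ref{c:complexity} with the quadratic computation of Corollary~\ref{T:2_complexity} yields geometric decay with ratio $q=1+2a\sigma=1+\frac{2a\gamma_R}{b^2}$, together with the sequence bound carrying the factor $\frac{1}{\sqrt a}\big[1+(a\sigma\sqrt{1+\tfrac{1}{2a\sigma}})^{-1}\big]$; substituting $a\sigma=ab^{-2}\gamma_R$ reproduces exactly the stated constant $C$, and undoing the $\min f$ shift gives the displayed inequalities.

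The only real obstacle is this localization step: one must guarantee that the iterates never leave the $\ell^1$ ball on which the error bound, and hence the KL inequality, was established, since the error-bound constant $\gamma_R$ deteriorates as $R$ grows. Everything else is bookkeeping, namely matching the triplet $(a,b,\ell)$ to $(q,C)$ through the single substitution $\sigma=\gamma_R b^{-2}$ in Corollary~\ref{T:2_complexity}.
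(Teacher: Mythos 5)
Your proposal is correct and follows essentially the same route as the paper: Proposition~\ref{P:FB_ab} supplies the constants $a,b$ in \eqref{E:ab_ISTA}, the descent property keeps the iterates in the $\ell^1$ ball of radius $R$, Lemma~\ref{sparse} gives the quadratic profile $\psi(s)=\frac{\gamma_R}{2}s^2$ (so $\ell=\gamma_R$ and assumption ({\bf A}) holds), and Corollary~\ref{c:complexity} combined with Corollary~\ref{T:2_complexity} with $\sigma=\gamma_R b^{-2}$ yields exactly $q$ and $C$. You in fact spell out two details the paper leaves implicit — the normalization $f\mapsto f-\min f$ and the explicit verification $\mu\|x_k\|_1\le f(x_k)\le f(x_0)\le\mu R$ behind the claim that the iterates stay in the ball — which only strengthens the write-up.
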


\begin{remark}[Complexity and convergence rates for ISTA]{\rm 
(a) While it was known that ISTA has a linear asymptotic convergence rate, see \cite{jalal} in which a transparent explanation is provided, best known {\em complexity bounds} were of the type $O(\frac{1}{k})$, see \cite{BT08,drori}. Much like in the spirit of \cite{jalal}, we show here how geometry impacts complexity --through error bounds/KL inequality--  providing thus complementary results to what is usually done in this field. \\
(b) The estimate of $\gamma_R$ given in Section~\ref{s:pol} is far from being optimal and more work remains to be done to obtain acceptable/tight bounds. Observe however that the role of an optimal $\gamma_R$ is absolutely crucial when it comes to complexity (see \eqref{estis}):  a good ``conditioning" ($\gamma_R$ not too small)  provides fast convergence, while a bad one\footnote{Bad conditioning are produced by flat objective functions yielding thus   small constants $\gamma_R$.} comes with  ``bad complexity".\\
(c) Assuming that the forward-backward method is performed with a constant stepsize $d/L$ as in Remark~\ref{cstep}, the value $q$ appearing in the complexity bounds given by Theorem \ref{T:ISTA} becomes
$$q=1+\frac{d(2-d)}{(d+1)^2L}\gamma_R.$$
This quantity is maximized when $d=1/2$. In this case, one obtains the optimized estimate:
\begin{eqnarray*} 
f(x_k)-\min f & \leq & \frac{f(x_0)-\min f}{\left(1+\frac{\gamma_R}{3L}\right)^{k}},\qquad \forall k\geq 0,\\
\|x_k-x^*\| & \leq & \sqrt{\frac{2}{3L}}\left( 1+\frac{6L}{\gamma_R\sqrt{1+\frac{3L}{\gamma_R}}}\right) \frac{\sqrt{f(x_0)-\min f}}{\left(1+\frac{\gamma_R}{3L}\right)^{\frac{k-1}{2}}}
,\qquad \forall k\geq 1.
\end{eqnarray*}


}
\end{remark}

\section{Error bounds and KL inequalities for convex functions: additional properties}\label{s:theory}

In this concluding section we provide further theoretical perspectives that will help the reader to understand the possibilities and the limitations of our general methodology. We give, in particular, a counterexample to the full equivalence between the KL property and error bounds, and we provide a globalization result for desingularizing functions.

\if{
We consider the theorem following:
\begin{theorem}{\label{KL}}
Let $ x^*\in S$, $R\in]0,+\infty[$ and $\varphi\in\cali K(0,r_0)$
\begin{description}
\item[(i)] If $f$ has the KL \ property at  $x^*$, that is 
	$$\f'(f(x))\|\partial^0 f(x)\|_-\ge 1, \forall x\in  B(x^*,R)\cap[0<f<r_0],$$
then f has local error bound global on the set $B(x^*,R)\cap[0<f<r_0]$.
\item[(ii)]Conversely, if $f$ has local error bound on the set $B(x^*,R)\cap[0<f<r_0]$, in which the residual function $\psi$ satisfies
	 $$\int_{0}^{r} \frac{\psi^{-1}(s)}{s}ds<\infty,$$
 then $f(x)$ has the KL \ property at $\bar x$.
\end{description}
\end{theorem}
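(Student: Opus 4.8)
The plan is to establish the two implications separately, recycling the subgradient-flow argument of Theorem~\ref{Theolocal}(i) for the direct part and producing an explicit desingularizing function by integration for the converse.

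For (i), I would reproduce the reasoning of Theorem~\ref{Theolocal}(i) essentially verbatim. The KL inequality on $B(x^*,R)\cap[0<f<r_0]$ feeds into the length estimate for subgradient curves (Theorem~\ref{P:1}), giving $\|\chi_x(t)-\chi_x(s)\|\le\varphi(f(\chi_x(t)))-\varphi(f(\chi_x(s)))$ for $0\le t<s$ and $x$ in the prescribed region. Setting $t=0$ and letting $s\to\infty$, and using that $\chi_x(s)$ converges strongly to some $\tilde x\in S$, one obtains $\dist(x,S)\le\|x-\tilde x\|\le\varphi(f(x))$; thus $f$ admits the announced error bound on that set, with residual function $\varphi$ itself.

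For (ii), the engine is the convexity inequality already exploited in Theorem~\ref{Theolocal}(ii): with $y=P_S(x)$, the subgradient inequality $0=f(y)\ge f(x)+\langle\partial^0 f(x),y-x\rangle$ yields $f(x)\le\|\partial^0 f(x)\|\,\dist(x,S)$. Writing the error bound in the form $\dist(x,S)\le\psi^{-1}(f(x))$, this gives $\|\partial^0 f(x)\|\ge f(x)/\psi^{-1}(f(x))$. The natural candidate desingularizing function is then $\varphi(r)=\int_0^r \frac{\psi^{-1}(s)}{s}\,ds$, for which $\varphi'(r)=\psi^{-1}(r)/r$, so that $\varphi'(f(x))\,\|\partial^0 f(x)\|\ge \frac{\psi^{-1}(f(x))}{f(x)}\cdot\frac{f(x)}{\psi^{-1}(f(x))}=1$ at every point of the region --- which is exactly the KL inequality at $\bar x$.

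The real work --- and the precise role of the hypothesis $\int_0^r \frac{\psi^{-1}(s)}{s}\,ds<\infty$ --- lies in checking that $\varphi$ is a genuine element of $\KK(0,r_0)$. Finiteness of the integral is exactly what guarantees $\varphi(0)=0$ together with continuity at the origin; the $C^1$ regularity on $(0,r_0)$ and the positivity $\varphi'=\psi^{-1}(\cdot)/(\cdot)>0$ are immediate; and concavity of $\varphi$ amounts to $r\mapsto\psi^{-1}(r)/r$ being nonincreasing, which follows from $\psi^{-1}$ being concave and vanishing at $0$ (a concave function through the origin has nonincreasing difference quotients). I expect this last verification, together with the attendant regularity assumptions on $\psi^{-1}$, to be the main obstacle. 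Conceptually, the integrability hypothesis is the sharp dividing line: when the error bound is so flat that $\int_0^r \psi^{-1}(s)/s\,ds$ diverges, the construction collapses, consistent with the counterexample to full equivalence announced for Section~\ref{s:theory}.
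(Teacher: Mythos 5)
Your proposal is correct and follows essentially the same route as the paper's own proof: part (i) via the subgradient-flow length estimate (the paper derives it directly by differentiating $\varphi(f(\chi_x(t)))$ and integrating, which is exactly the content of Theorem~\ref{P:1} you invoke), and part (ii) via the convexity inequality at $y=P_S(x)$ combined with the desingularizing function $\varphi(r)=\int_0^r \psi^{-1}(s)/s\,ds$. Your added verification that this $\varphi$ genuinely belongs to $\KK(0,r_0)$ (continuity at $0$ from the integrability hypothesis, concavity from monotonicity of the difference quotients of $\psi^{-1}$) is in fact more careful than the paper, which leaves that check implicit.
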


\begin{proof}
$(i)$ Take $x\in B(x^*,R)\cap[0<f<r_0]$, we denote $\chi_x:[0,\infty)\to \R^n$ as the solution of the differential inclusion 
	$$-\overset{.}{u}(t) \in\partial f\left(u(t)\right),$$
with an initial condition $u(0)=x$.

The function $t\mapsto\|u(t)-a\|^2$ is decreasing, $\forall a\in S$, so $u(t)\in S,\forall t>0$. By derivating the function $t\mapsto \f\left(f(\chi_x(t))\right)$, we have
	$$-\frac{d}{dt}\f\left(f(\chi_x(t))\right)=\f'\left(f(\chi_x(t))\right)\|\partial^0 f(\chi_x(t))\|\|\overset{.}\chi_x (t)\|\geq \|\overset{.}\chi_x (t)\|.$$
Integrating both terms on the interval $[0, t]$, we deduce
	$$\f\left(f(x)\right)-\f\left(f(\chi_x(t))\right)\geq \|\chi_x(t)-x\|.$$
As $t\to \infty$, and note that $\chi_x(t)\xrightarrow{t\to \infty} x^*\in S, f(\chi_x(t))\xrightarrow{t\to \infty} f(x)=0 $, we obtain 
	$$ \f(f(x))\geq \dist(x,S), \forall x\in B(x^*,R)\cap[0<f<r_0],$$
we deduce
 	$$ f(x)\geq \psi(\dist(x,S)), \forall x\in B(x^*,R) \cap[0<f<r_0],$$
 where $\psi=\f^{-1}$, which is conclusion $(ii)$.

(ii) Take $x\in B(x^*,R)\cap[0<f<r_0]$ and $p\in\partial f(x)$. By convexity, we have
 $$\langle p,x-y\rangle\geq f(x)-f(y),\forall y\in \R^n.$$
Replace $y$ by $P_Sx$, we obtain 
	 $$\|p\|\dist(x,S)\geq f(x).$$
In addition, from (ii), we deduce $\psi^{-1}(f(x))\geq \dist(x,S)$, together with the above, it follows that
	 $$\psi^{-1}(f(x))\|\partial^0 f(x)\|\geq  f(x),$$
 and so
	 $$\varphi'(f(x))\|\partial^0 f(x)\| \geq 1, \forall x\in B(x^*,R)\cap[0<f<r_0],$$
 where $\displaystyle \f(s)=\int_0^s \frac{\psi^{-1}(t)}{t}dt$, 
 as required.
 \end{proof}
}\fi

\subsection{KL inequality and length of subgradient curves}

This subsection essentially recalls a characterization result from \cite{BolDanLeyMaz} on the equivalence between the KL inequality and the existence of a uniform bound for the length of subgradient trajectories verifying a subgradient differential inclusion. Due to the contraction properties of the semi-flow,  the result is actually stronger than the nonconvex results provided in \cite{BolDanLeyMaz}. For the reader's convenience, we provide a self-contained proof.\\

Given $x\in\overline{\dom \partial f}$, we denote by $\chi_x:[0,\infty)\to H$ the unique solution of the differential inclusion
$$\dot y(t)\in-\partial f(y(t)),\mbox{ almost everywhere on }(0,+\infty),$$
with initial condition $y(0)=x$.\\

The following result provides an estimation on the length of subgradient trajectories, when $f$ satisfies the KL  inequality. Given $x\in\overline{\dom f}$, and $0\le t<s$, write
$$\length(\chi_x,t,s)=\int_{t}^{s}\|\dot{\chi}_x(\tau)\|\,d\tau.$$
Recall that $S=\argmin f$ and that $\min f=0$.

\begin{theorem}[KL and uniform bounds of subgradient curves]\label{P:1}
Let $\bar x\in S$, $\rho>0$ and $\varphi\in\cali K(0,r_0)$. The following are equivalent:
\begin{itemize}
	\item [i)] For each $y\in B(\bar x,\rho)\cap[0<f<r_0]$, we have $$\varphi'(f(y))\|\partial^0 f (y)\|\ge 1.$$
	\item [ii)] For each $x\in B(\bar x,\rho)\cap[0<f\le r_0]$ and $0\le t<s$, we have
	$$\length(\chi_x,t,s)\le \varphi\big(f\left(\chi_x\left(t\right)\right)\big)-\varphi\big(f\left(\chi_x\left(s\right)\right)\big).$$
\end{itemize}
Moreover, under these conditions, $\chi_x(t)$ converges strongly to a minimizer as $t\to\infty$.
\end{theorem}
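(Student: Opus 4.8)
The plan is to reduce the whole statement to a one–variable computation along the subgradient curve, exploiting the fine regularity recorded in Theorem~\ref{P:gradient_curve}: the right-derivative identities $\frac{d}{dt}\chi_x(t^+)=-\partial^0f(\chi_x(t))$ and $\frac{d}{dt}f(\chi_x(t^+))=-\|\dot\chi_x(t^+)\|^2$, the fact that $t\mapsto f(\chi_x(t))$ is nonincreasing (and in fact convex, hence absolutely continuous on compact intervals), and the contraction property that $t\mapsto\|\chi_x(t)-z\|$ decreases for each $z\in S$. The object of interest in both directions is $g(t):=\varphi\big(f(\chi_x(t))\big)$.

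For the implication i) $\Rightarrow$ ii), fix $x\in B(\bar x,\rho)\cap[0<f\le r_0]$. First I would verify that the trajectory remains in the region where i) is available: by the contraction property with $z=\bar x\in S$ the curve stays in $B(\bar x,\rho)$, while by Theorem~\ref{P:gradient_curve}~iv) the value $f(\chi_x(\cdot))$ is nonincreasing and drops strictly below $r_0$ for $t>0$; if $f(\chi_x(t_0))=0$ at a finite hitting time the curve is stationary afterwards and contributes no length, so it suffices to work on the interval where $0<f(\chi_x(\cdot))<r_0$. There $g$ is absolutely continuous with $g'(t)=\varphi'(f(\chi_x(t)))\,\frac{d}{dt}f(\chi_x(t))$ almost everywhere, and inserting the two Brezis identities together with $\|\dot\chi_x(t)\|=\|\partial^0f(\chi_x(t))\|$ and the KL inequality i) gives, a.e.,
\[
-g'(t)=\varphi'(f(\chi_x(t)))\,\|\dot\chi_x(t)\|^2\ge\frac{\|\dot\chi_x(t)\|^2}{\|\partial^0f(\chi_x(t))\|}=\|\dot\chi_x(t)\|.
\]
Integrating from $t$ to $s$ yields $\length(\chi_x,t,s)\le g(t)-g(s)$, which is exactly ii).

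For the converse ii) $\Rightarrow$ i), fix $y\in B(\bar x,\rho)\cap[0<f<r_0]$. If $y\notin\dom\partial f$ then $\|\partial^0f(y)\|=+\infty$ and the inequality holds by the convention $s\cdot(+\infty)=+\infty$, so assume $y\in\dom\partial f$. I would apply ii) with $x=y$, $t=0$ and small $s>0$, divide by $s$, and let $s\to0^+$. The left-hand side $\frac1s\int_0^s\|\dot\chi_y(\tau)\|\,d\tau$ tends to $\|\partial^0f(y)\|$, since the speed $\tau\mapsto\|\partial^0f(\chi_y(\tau))\|$ is nonincreasing and right-continuous at $0$ with value $\|\partial^0f(y)\|$. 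The right-hand side is the difference quotient of $-g$ at $0$, whose limit is $-g'(0^+)=\varphi'(f(y))\,\|\partial^0f(y)\|^2$ by the same identities extended to $t=0$. Passing to the limit gives $\|\partial^0f(y)\|\le\varphi'(f(y))\,\|\partial^0f(y)\|^2$, and dividing by $\|\partial^0f(y)\|\in(0,\infty)$ (positive because $f(y)>0=\min f$ forces $0\notin\partial f(y)$) produces $\varphi'(f(y))\|\partial^0f(y)\|\ge1$.

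Finally, for the \emph{Moreover} clause I would take $t=0$ and $s\to\infty$ in ii) and use that $f(\chi_x(s))\to0$ with $\varphi$ continuous and $\varphi(0)=0$, obtaining the finite-length bound $\length(\chi_x,0,\infty)\le\varphi(f(x))<\infty$. Finite total length forces $\|\chi_x(t)-\chi_x(s)\|\le\length(\chi_x,t,s)\to0$ as $t,s\to\infty$, so $\chi_x(t)$ is strongly Cauchy and converges strongly; its strong limit must coincide with the weak limit $\hat x\in S$ provided by Theorem~\ref{P:gradient_curve}~v), a minimizer. I expect the only delicate points to be the justification of the a.e. chain rule for $\varphi\circ f\circ\chi_x$ and, in the converse, the identification of $\lim_{s\to0^+}\frac1s\int_0^s\|\dot\chi_y\|$ with $\|\partial^0f(y)\|$; both rest on the monotonicity and one-sided regularity of the speed furnished by Theorem~\ref{P:gradient_curve}, and everything downstream is routine.
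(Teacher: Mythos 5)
Your proposal is correct and follows essentially the same route as the paper's proof: differentiate $\varphi\circ f\circ\chi_x$ along the flow using the Br\'ezis identities and the KL inequality, integrate to get the length bound; conversely, take difference quotients at $t=0$ and use $\|\dot\chi_y(0^+)\|=\|\partial^0 f(y)\|$; and deduce strong convergence from finite length via the Cauchy property. If anything, you are slightly more careful than the paper on two minor points — the possibility that the trajectory reaches $\argmin f$ in finite time, and the positivity of $\|\partial^0 f(y)\|$ needed to divide in the converse — but these are refinements of the same argument, not a different one.
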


\begin{proof}
Take $x\in B(\bar x,\rho)\cap[0<f\le r_0]$ and $0\le t<s$. First observe that 
$$\varphi(f(\chi_x(t)))-\varphi(f(\chi_x(s)))=\int_{s}^{t}\frac{d}{d\tau}\,\varphi(f(\chi_x(\tau)))\,d\tau=\int_{t}^{s} \varphi'(f(\chi_x(\tau)))\|\dot{\chi}_x(\tau)\|^2\,d\tau.$$
Since $\chi_x(\tau)\in \dom \partial f\cap B(\bar x,\rho)\cap[0<f<r_0]$ for all $\tau>0$ (see Theorem~\ref{P:gradient_curve}) and $-\dot{\chi}_x(\tau)\in\partial f(\chi_x(\tau))$ for almost every $\tau>0$, it follows that $$1\le \|\partial^0(\varphi\circ f)(\chi_x(\tau))\|\le \varphi'(f(\chi_x(\tau)))\|\dot{\chi}_x(\tau)\|$$ for all such $\tau$. Multiplying by $\|\dot{\chi}_x(\tau)\|$ and integrating from $t$ to $s$, we deduce that
$$\length(\chi_x,t,s)\le \varphi(f(\chi_x(t)))-\varphi(f(\chi_x(s))).$$
Conversely, take $y\in \dom \partial f\cap B(\bar x,\rho)\cap[0<f<r_0]$ (if $y$ is not in $\dom\partial f$ the result is obvious). For each $h>0$ we have
$$\frac{1}{h}\int_0^h\|\dot{\chi}_y(\tau)\|\,d\tau\le-\frac{\varphi(f(\chi_y(h))-\varphi(f(y))}{h}.$$
As $h\to 0$, we obtain
$$\|\dot\chi_y(0^+)\|\le\varphi'(f(y))\|\dot\chi_y(0^+)\|^2=\varphi'(f(y))\,\|\partial^0f(y)\|\,\|\dot\chi_y(0^+)\|,$$
and so
$$\|\partial^0(\varphi\circ f)(y)\|\ge 1.$$
Finally, since $\|\chi_x(t)-\chi_x(s)\|\le \length(\chi_x,t,s)$, we deduce from ii) that the function $t\mapsto \chi_x(t)$ has the Cauchy property as $t\to\infty$.
\end{proof}

\subsection{A counterexample: error bounds do not imply KL}

In \cite[Section 4.3]{BolDanLeyMaz}, the authors build a twice continuously differentiable convex function $f:\R^2\to\R$ which does not have the KL property, and such that $S=\overline{D}(0,1)$ (the closed unit disk of radius $1$). This implies that $f$ does not satisfy the KL  inequality whatever choice of desingularizing function $\varphi$ is made.

Let us show that this function has a smooth error bound. First note that, since $S$ is compact, $f$  is  coercive (see, for instance, \cite{Rockafellar}). Define $\psi:[0,\infty)\to\R_+$ by
$$\psi(s)=\min\{f(x):\|x\|\ge 1+s\}.$$
This function is increasing (recall that $f$ is convex) and it satisfies 
\begin{align}
& \psi(0)=0, \label{1}\\
 & \psi(s)>0\text{ for } s>0, \label{2}\\
& f(x)\ge \psi(\dist(x,S))\text{ for all $x\in[r<f]$} \label{3}
\end{align}
Let $\hat\psi$ be the convex envelope of $\psi$, that is the greatest convex function lying below $\psi$. One easily verifies that $\hat\psi$ enjoys the same properties \eqref{1}, \eqref{2}, \eqref{3}. The Moreau envelope of the latter:
$$\R_+\ni s \to \Psi(s):=\hat\psi_1(s)=\inf\{\hat\psi(\varsigma)+\sfrac{1}{2}(s-\varsigma)^2:\varsigma\in \R\},$$
is convex, has $0$ as a unique minimizer,  is continuously differentiable with positive derivative on $\R\setminus\{0\}$, and satisfies $\Psi\leq \psi_1$ (see \cite{BauCom}). Whence,
$$f(x)\ge \Psi(\dist(x,S)) \text{ for all $x\in[0<f<r]$}.$$
We have proved the following:

\begin{theorem}[Error bounds do not imply KL]
There exists a $C^2$ convex function $f:\R^2\to \R$, which does not satisfy the KL \ inequality, but has an error bound with a smooth convex residual function. 
\end{theorem}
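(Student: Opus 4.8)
The plan is to keep the difficult half --- a convex function that genuinely fails KL --- as an imported ingredient, and to build by hand the easy half, namely a smooth convex error bound. Concretely, I would start from the $C^2$ convex function $f:\R^2\to\R$ of \cite[Section 4.3]{BolDanLeyMaz} whose minimizer set is the closed unit disk $S=\overline{D}(0,1)$ and which admits no desingularizing function whatsoever; this single fact already secures that the function we exhibit is not KL, independently of the residual we later attach to it. All the remaining work is to certify that this same $f$ nonetheless carries an error bound of the form $f(x)\ge \Psi(\dist(x,S))$ with $\Psi$ smooth and convex.

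To produce $\Psi$, I would first record the coarse growth profile of $f$ away from $S$. Since $S$ is compact, $f$ is coercive, so the function
$$\psi(s)=\min\{f(x):\dist(x,S)\ge s\}=\min\{f(x):\|x\|\ge 1+s\}$$
is well defined, the minimum being attained on the closed set by coercivity. I would then verify three structural facts: $\psi(0)=0$ because the unit circle lies in $S$; $\psi(s)>0$ for $s>0$ because $S=\argmin f$ is exactly where $f$ vanishes, so $f>0$ strictly outside $S$; and $\psi$ is nondecreasing because the feasible set shrinks as $s$ grows. The error bound is then immediate: for any $x$ with $\dist(x,S)=s>0$ the point $x$ competes in the definition of $\psi(s)$, whence $f(x)\ge\psi(\dist(x,S))$. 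Finally I would regularize $\psi$ in two stages without destroying these properties: pass to the convex envelope $\hat\psi$ (the greatest convex minorant), and then to its Moreau envelope $\Psi=\hat\psi_1$, which is convex, $C^{1,1}$, vanishes only at $0$, and lies below $\hat\psi\le\psi$. Since each operation only lowers the function, the inequality $f(x)\ge\Psi(\dist(x,S))$ survives, and $\Psi$ is the desired smooth convex residual.

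The delicate point --- the one I would spend real care on --- is that the two smoothing steps preserve strict positivity, i.e.\ that $\Psi(s)>0$ for every $s>0$. This is not automatic: the convex envelope of a nonnegative increasing function with $\psi(0)=0$ can collapse to the zero function when $\psi$ grows too slowly or too concavely (the envelope of $s\mapsto\sqrt s$ on all of $[0,\infty)$ is identically $0$). I would circumvent this by working inside a fixed sublevel set $[f\le r]$, which by coercivity confines $\dist(x,S)$ to a compact interval $[0,s_{\max}]$; on a compact interval the affine minorant joining $(0,0)$ to $(s_{\max},\psi(s_{\max}))$ already forces $\hat\psi>0$ on $(0,s_{\max}]$, and the Moreau envelope keeps this. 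The restriction is harmless, since the error bound is only asserted on $[0<f<r]$ to begin with.

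Conceptually, I would close by remarking why no contradiction with the equivalence of Theorem~\ref{Theolocal} arises: the residual $\Psi^{-1}$ manufactured here vanishes at $0$ more slowly than any power (reflecting the extreme transverse flatness of $f$ along $S$), so its derivative blows up and the moderate-growth inequality $s\varphi'(s)\ge c\varphi(s)$ must fail. It is precisely this hypothesis, which underpins that equivalence, that is absent here --- and that is the whole point of the example.
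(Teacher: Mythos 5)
Your proposal is correct and follows essentially the same route as the paper: import the non-KL convex function of \cite[Section 4.3]{BolDanLeyMaz} with $S=\overline{D}(0,1)$, define $\psi(s)=\min\{f(x):\|x\|\ge 1+s\}$, pass to its convex envelope $\hat\psi$ and then to the Moreau envelope $\Psi$ to obtain a smooth convex residual, and conclude $f(x)\ge\Psi(\dist(x,S))$.

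One caveat on the point where you are \emph{more} careful than the paper. Your worry that the convex envelope might collapse to $0$ is legitimate (the $\sqrt{s}$ example is accurate, and the paper dismisses the issue with ``one easily verifies''), but your justification of the fix is flawed: the affine function joining $(0,0)$ to $(s_{\max},\psi(s_{\max}))$ is in general \emph{not} a minorant of $\psi$, hence cannot be used to bound $\hat\psi$ from below --- take $\psi(s)=s^2$ on $[0,1]$, whose chord lies strictly above it. The claim you need is nevertheless true, and can be repaired in two ways: (a) directly, since in dimension one $\hat\psi(s)$ is an infimum of two-point combinations $\lambda\psi(s_1)+(1-\lambda)\psi(s_2)$ with $s_1\le s\le s_2\le s_{\max}$, and a short case analysis on $\lambda$ gives $\hat\psi(s)\ge\min\bigl(\tfrac12\psi(s/2),\,\tfrac{s}{2s_{\max}}\psi(s)\bigr)>0$; or (b) more structurally, by observing that coercivity of $f$ (automatic here: $f$ is convex on $\R^2$ with compact argmin) forces $\psi$ to grow at least linearly at infinity, which rules out the collapse phenomenon even on all of $[0,+\infty)$ --- this is implicitly why the paper's unrestricted construction is sound. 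With either repair, your argument is complete and matches the paper's.
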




\begin{remark}[H\"{o}lderian error bounds without convexity] 
{\rm H\"{o}lderian error bounds do not necessa\-ri\-ly imply \L ojasiewicz inequality $-$ not even the KL inequality $-$ for nonconvex functions. The reason is elementary and consists simply in considering a function with non isolated critical values. Given $r\geq 2$, consider the $C^{r-1}$ function 
$$f(x)=\begin{cases}
		x^{2r}  \left(2+\cos\left(\frac{1}{x}\right)\right)&\text{ if } x\ne 0,\notag\\
		0 &\text{ if } x=0.\notag
	\end{cases}$$
It satisfies  $f'(0)=0$ and $f'(x)=4rx^{2r-1}+2rx^{2r-1}\cos\left(\frac{1}{x}\right)+x^{2r-2}\sin\left(\frac{1}{x}\right)$  if $x\ne0$. 
Moreover, we have $f(x)\geq x^{2r}= \dist(x,S)^{2r}$ 
for all $x\in\R$.  On the other hand, picking  $y_k=\frac{1}{2k\pi}$ and $z_k=\frac{1}{2k\pi+3\frac{\pi}{2} }$, we see that $f'(y_k)=\frac{6r}{(2k\pi)^{2r-1}}>0$ and $f'(z_k)=\frac{1}{(2k\pi+3\frac{\pi}{2})^{2r-2}}\left(\frac{4r}{2k\pi+3\frac{\pi}{2}}-1\right)<0$
for all sufficiently large $k$. Therefore, there is a positive sequence $(x_k)_{k\in\N}$ converging to zero with $f'(x_k)=0$ for all $k$. Hence, $f$ cannot satisfy the KL inequality at $0$.}
\end{remark}




\subsection{From semi-local inequalities to global inequalities}\label{locglob}

We derive here a globalization result for KL inequalities that strongly supports the Lipschitz continuity assumption for the derivative of the inverse of a desingularizing function, an assumption that was essential to derive Theorem~\ref{t:complexity}. The ideas behind the proof are inspired by \cite{BolDanLeyMaz}.

\begin{proposition}[Globalization of KL inequality -- convex case]\label{convex} 
Let $f: H\to \Rcupinf$ be a proper lower semicontinuous convex function such that $\argmin f\neq\emptyset$ and $\min f=0$. Assume also that $f$ has the KL property on $[0<f< r_{0}]$ with desingularizing function $\varphi \in \KK(0,r_0)$. Then, given $r_1\in(0,r_0)$, the  function given by 
 $$\phi(r)= \left\{\begin{array}{ll} \varphi(r) \mbox{ when }r\leq r_1\\
  \varphi(r_1)+(r-r_1)\varphi'(r_1) \mbox{ when }r\geq r_1 
  \end{array}\right.
  $$
is desingularising for $f$ on all of $H$.
\end{proposition}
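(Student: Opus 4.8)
The plan is to verify two facts: that the glued function $\phi$ is an admissible desingularizing function, i.e. $\phi\in\KK(0,+\infty)$, and that the KL inequality $\phi'(f(x))\,\|\partial^0 f(x)\|\ge 1$ holds at every $x\in[0<f]$. The regularity of $\phi$ is routine. Since $\phi$ glues $\varphi$ on $[0,r_1]$ to its tangent line at $r_1$, the two pieces match in value and in first derivative there (both give $\varphi(r_1)$ and $\varphi'(r_1)$), so $\phi\in C^0[0,\infty)\cap C^1(0,\infty)$ and $\phi(0)=\varphi(0)=0$. Positivity of $\phi'$ holds because $\phi'=\varphi'>0$ on $(0,r_1)$ and $\phi'\equiv\varphi'(r_1)>0$ on $(r_1,\infty)$. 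Concavity holds because $\varphi'$ is nonincreasing on $(0,r_1)$ and $\phi'$ is constant equal to $\varphi'(r_1)\le\varphi'(r)$ (for $r<r_1$) beyond $r_1$; thus $\phi'$ is nonincreasing on all of $(0,\infty)$.

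For the KL inequality I would split according to the two regimes. If $0<f(x)\le r_1$, then $\phi'(f(x))=\varphi'(f(x))$ and, since $r_1<r_0$, the hypothesis gives directly $\phi'(f(x))\,\|\partial^0 f(x)\|=\varphi'(f(x))\,\|\partial^0 f(x)\|\ge 1$. The whole difficulty is concentrated in the regime $f(x)\ge r_1$, where $\phi$ is affine in the values of $f$ and $\phi'(f(x))=\varphi'(r_1)$; the inequality to be proved then reduces to the single estimate
$$\|\partial^0 f(x)\|\ \ge\ \frac{1}{\varphi'(r_1)}\qquad\text{whenever } f(x)\ge r_1.$$
If $x\notin\dom\partial f$ this is trivial since $\|\partial^0 f(x)\|=+\infty$, so one may assume $x\in\dom\partial f$.

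To obtain this lower bound I would run the subgradient flow from $x$. Let $\chi_x$ be the subgradient curve of Theorem~\ref{P:gradient_curve}. The map $t\mapsto f(\chi_x(t))$ is continuous and nonincreasing, starts at $f(x)\ge r_1$, and tends to $\min f=0$; by the intermediate value theorem there is $t_1\ge 0$ with $f(\chi_x(t_1))=r_1$. At that instant $\chi_x(t_1)\in[f=r_1]\subset[0<f<r_0]$, so the original KL inequality gives $\varphi'(r_1)\,\|\partial^0 f(\chi_x(t_1))\|\ge 1$, that is $\|\partial^0 f(\chi_x(t_1))\|\ge 1/\varphi'(r_1)$. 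The decisive ingredient is the classical result of Br\'ezis \cite{HaimBrezis} that, for the gradient flow of a convex function and $x\in\dom\partial f$, the speed $t\mapsto\|\dot\chi_x(t^+)\|=\|\partial^0 f(\chi_x(t))\|$ is nonincreasing on $[0,\infty)$. Applying it between $0$ and $t_1$ yields $\|\partial^0 f(x)\|=\|\partial^0 f(\chi_x(0))\|\ge\|\partial^0 f(\chi_x(t_1))\|\ge 1/\varphi'(r_1)$, which is exactly the required estimate and finishes the proof.

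The main obstacle is precisely this monotonicity of the velocity along the subgradient curve, and it cannot be bypassed by convexity alone. A purely convex argument (projecting onto $S$, using $\|\partial^0 f(x)\|\,\dist(x,S)\ge f(x)$ together with the error bound $\dist(\cdot,S)\le\varphi(f(\cdot))$ at level $r_1$) only delivers $\|\partial^0 f(x)\|\ge r_1/\varphi(r_1)$; since concavity of $\varphi$ with $\varphi(0)=0$ forces $r_1\varphi'(r_1)\le\varphi(r_1)$, this is strictly weaker than the needed $1/\varphi'(r_1)$. Hence the gradient-flow regularization (decay of $\|\partial^0 f\|$ along trajectories) is the genuine engine of the globalization, exactly as in the approach of \cite{BolDanLeyMaz}.
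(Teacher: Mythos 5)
Your proof is correct and follows essentially the same route as the paper's: reduce to the regime $f(x)\ge r_1$, use the subgradient flow and the intermediate value theorem to find a point $y=\chi_x(t_1)$ on the level set $[f=r_1]$, and invoke Br\'ezis's monotonicity of $t\mapsto\|\partial^0 f(\chi_x(t))\|$ to transfer the KL inequality at $y$ back to $x$. Your explicit verification that $\phi\in\KK(0,+\infty)$ and your closing remark on why purely convex estimates fall short are welcome additions, but the core argument is identical to the paper's.
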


\begin{proof} Let $x$ be such that $f(x)>r_1$. We would like to establish that $\|\partial^0 f(x)\|\phi'(f(x))\geq1$, thus we may assume, with no loss of generality, that $\|\partial^0 f(x)\|$ is finite. If there is $y\in [f=r_1]$ such that $\|\partial^0 f(y)\|\leq \|\partial^0 f(x)\|$, then
\begin{equation*}
\|\partial^0 f(x)\|\phi'(f(x)) =  \|\partial^0 f(x)\|\varphi'(r_1) \geq  \|\partial^0 f(y)\|\varphi'(r_1)
 = \|\partial^0 f(y)\|\varphi'(f(y)) \geq 1. 
\end{equation*}


To show that such a $y$ exists, we use the semiflow of $\partial f$. Consider the curve $t\to\chi_x(t)$ and observe that there exists $t_1>0$ such that $f(\chi_x(t_1))=r_1$, because $f(\chi_x(0))=f(x)>r_1$, $f(\chi_x(t))\to \inf f<r_1$ and $f(\chi_x(\cdot))$ is continuous. From \cite[Theorem 3.1 (6)]{HaimBrezis}, we know also that $\|\partial f^0(\chi_x(t))\|$ is nonincreasing. As a consequence, if we set $y=\chi_x(t_1)$, we obtained the desired point and the final conclusion.
\end{proof}

\medskip

One deduces easily from the above the following result, which is close to an observation already made in \cite{BolDanLeyMaz}. For an insight into the notion of {\em definability} of functions, a prominent example being semi-algebraicity, one is referred to \cite{costedef}. Recall that coercivity of a proper lower-semicontinuous convex function defined on a finite dimensional space is equivalent to the fact that $\argmin f$ is nonempty and compact.

\begin{theorem}[Global KL inequalities for coercive definable convex functions] 
Let $f:\R^n\to\R$ be proper, lower-semicontinuous, convex, definable, and such that $\argmin f $ is nonempty and compact. Then, $f$ has the KL property on $\R^n$.
\end{theorem}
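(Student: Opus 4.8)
The plan is to reduce the global statement to the construction of a single desingularizing function on some sublevel set and then invoke the globalization result, Proposition~\ref{convex}. Since the KL inequality is automatically satisfied at every point that is not a minimizer, it suffices to produce one $\varphi \in \KK(0,r_0)$ such that $\varphi'(f(x))\,\|\partial^0 f(x)\| \ge 1$ for all $x \in [0<f<r_0]$; Proposition~\ref{convex} then continues such a $\varphi$ affinely past a threshold into a desingularizing function valid on all of $\R^n$, which is exactly the global KL property.

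First I would invoke the definable version of the Bolte--Daniilidis--Lewis theorem \cite{BolDanLew1}: at each $\bar x \in S = \argmin f$ there exist $\varepsilon_{\bar x}>0$, $r_{\bar x}>0$ and a desingularizing function $\varphi_{\bar x}$, definable in the same o-minimal structure as $f$, with $\varphi_{\bar x}'(f(x))\,\|\partial^0 f(x)\| \ge 1$ on $B(\bar x,\varepsilon_{\bar x}) \cap [0<f<r_{\bar x}]$. The balls $\{B(\bar x,\varepsilon_{\bar x})\}_{\bar x \in S}$ form an open cover of $S$; since $f$ is coercive, $S$ is compact, so I extract a finite subcover $B(\bar x_1,\varepsilon_1),\dots,B(\bar x_N,\varepsilon_N)$, with associated functions $\varphi_1,\dots,\varphi_N$ and thresholds $r_1^0,\dots,r_N^0$. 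Write $U = \bigcup_{i=1}^N B(\bar x_i,\varepsilon_i)$. Next I would use coercivity once more to trap a whole sublevel set inside $U$: I claim there is $r_0 \in (0,\min_i r_i^0]$ with $[f \le r_0] \subset U$. Indeed, were this false, one could pick $x_n \notin U$ with $f(x_n) \to 0$; coercivity makes $(x_n)$ bounded, a cluster point $x^\ast$ would satisfy $f(x^\ast) = 0$ by lower semicontinuity, hence $x^\ast \in S \subset U$, contradicting $x^\ast \in \overline{\R^n \setminus U}$.

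The decisive step is to merge $\varphi_1,\dots,\varphi_N$ into one. Because each $\varphi_i$ is definable in the same o-minimal structure, the finitely many germs at $0$ of the derivatives $\varphi_i'$ are totally ordered, so there is an index $i_0$ and a radius (again denoted $r_0$ after shrinking) with $\varphi_{i_0}'(s) \ge \varphi_i'(s)$ for all $i$ and all $s \in (0,r_0)$. Then for any $x \in [0<f<r_0]$, picking $i$ with $x \in B(\bar x_i,\varepsilon_i)$ gives
$$\varphi_{i_0}'(f(x))\,\|\partial^0 f(x)\| \ge \varphi_i'(f(x))\,\|\partial^0 f(x)\| \ge 1,$$
so $\varphi := \varphi_{i_0} \in \KK(0,r_0)$ is a desingularizing function for $f$ on the whole sublevel region $[0<f<r_0]$. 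Applying Proposition~\ref{convex} to $\varphi$ then yields a global desingularizing function, and the proof is complete.

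The main obstacle is precisely this patching. It is the germ-comparability of definable functions — a feature unavailable for arbitrary convex functions — that allows finitely many local desingularizing functions, obtained at different minimizers and a priori on incomparable domains, to be dominated by a single one; this is the point at which definability, beyond mere convexity and compactness, enters in an essential way.
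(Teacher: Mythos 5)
Your proof is correct, and it reaches the paper's conclusion by the same two-step skeleton (a single desingularizing function on a sublevel band, then globalization via Proposition~\ref{convex}), but the middle step is handled genuinely differently. The paper simply takes $r_0>0$ and cites \cite{BolDanLewShi07} to obtain, in one stroke, a $\varphi\in\KK(0,r_0)$ that desingularizes $f$ on the whole band $[\min f<f<\min f+r_0]$ -- that reference already contains the uniformization over compact regions -- and then applies Proposition~\ref{convex}. You instead reconstruct this uniformization from the purely pointwise KL theorem: a finite cover of the compact set $\argmin f$, coercivity plus lower semicontinuity to trap a sublevel band inside the cover, and o-minimal germ comparability to dominate the finitely many local desingularizing functions by a single one. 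What your route buys is a self-contained argument (given only the local KL property) that isolates exactly where definability is indispensable: the total ordering of germs, which fails for general convex functions, consistently with the counterexample of Section~\ref{s:theory}. What the paper's route buys is brevity and the avoidance of one technical caveat in your write-up: in a general o-minimal structure it is not quite accurate to say that the $\varphi_i$ themselves are definable, since a desingularizing function is produced by integrating a definable (talweg-type) function and antiderivatives may leave the structure; what the construction in \cite{BolDanLew1,BolDanLewShi07} actually guarantees is that the \emph{derivatives} $\varphi_i'$ are definable in the same structure. Since your germ-comparison argument is phrased on the derivatives $\varphi_i'$ anyway, and the paper itself asserts this comparability (see its footnote on desingularizing functions with different domains), this is a one-line repair rather than a gap: replace ``each $\varphi_i$ is definable'' by ``each $\varphi_i$ can be chosen with $\varphi_i'$ definable in the structure of $f$,'' and the rest of your proof goes through unchanged.
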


\begin{proof} Take $r_0>0$ and use  \cite{BolDanLewShi07} to obtain $\varphi\in \KK(0,r_0)$ so that $f$ is KL on $[\min f <f<\min f+r_0]$. Then use the previous proposition to extend $\varphi$ on $(0,+\infty)$.
\end{proof}

\begin{remark}{\bf (Complexity of descent methods for definable coercive convex function)} {\rm The previous result implies that {\em there always exists a global measure of complexity for first-order  descent methods $\hone,\htwo$ of definable coercive convex lower-semicontinuous functions}. This complexity bound is encoded in majorizing sequences computable from a single definable function and from the initial data. These majorizing sequences are of course defined, as in Theorem~\ref{t:complexity}, by 
\begin{equation*}
\alpha_{k+1}=\argmin\left\{\varphi^{-1}(u)+\frac{1}{2\zeta} (u-\alpha_k)^2:u\geq 0\right\}, \:\alpha_0=\varphi(r_0).
\end{equation*}
or equivalently
\begin{equation*}
\alpha_{k+1}=\prox_{\zeta\varphi^{-1}}(\alpha_k), \:\alpha_0=\varphi(r_0),
\end{equation*}
where $\zeta$ is a parameter of the chosen first-order  method. 

It is a very theoretical result  yet conceptually important since it shows that the understanding and the research of complexity is guaranteed by  the existence of a global KL inequality and our general methodology. }
\end{remark}

\section{Conclusions}
In this paper, we devised a general methodology to estimate the complexity for descent methods which are commonly used to solve convex optimization problems: error bounds can be employed to obtain desingularizing functions in the sense of \L ojasiewicz, which, in turn, provide the complexity estimates. These techniques are applied to obtain new complexity results for ISTA in compressed sensing, as well as barycentric and alternating projection method for convex feasibility.

While this work was in its final phase, we discovered the prepublication \cite{LMNP} in which  complementary ideas are used to develop error bounds for parametric polynomial systems and to analyze the convergence rate of some first order methods. Numerous interconnections and roads must be investigated at the light of these new discoveries, and we hope to do so in our future research.

\bigskip

\noindent
{\bf Acknowledgements } The authors would like to thank Amir Beck, Patrick Combettes,  \'Edouard Pauwels, Marc Teboulle and the anonymous referee for very useful comments.

\end{document}